\newtheorem{Theorem}{Theorem}[section]
\newtheorem{Proposition}[Theorem]{Proposition}
\newtheorem{Lemma}[Theorem]{Lemma}
\newtheorem{Corollary}[Theorem]{Corollary}
\newtheorem{Definition}[Theorem]{Definition}
\DeclarePairedDelimiter\parentheses{\lparen}{\rparen}
\DeclarePairedDelimiter\braces{\lbrace}{\rbrace}
\DeclarePairedDelimiter\abs{\lvert}{\rvert}
\DeclarePairedDelimiter\norm{\lVert}{\rVert}
\newcommand{\Norm}{\mathrm{N}}
\newcommand{\QQ}{\mathbb{Q}}
\newcommand{\RR}{\mathbb{R}}
\newcommand{\cupdot}{\mathbin{\mathaccent\cdot\cup}}
\DeclareMathOperator{\eOpname}{e}
\NewDocumentCommand\e{ s O{} m }{
	\IfBooleanTF{#1}{%
		\eOpname_{#2}\parentheses[\big]{#3}%
	}{\eOpname_{#2}\parentheses{#3}}%
}
\numberwithin{equation}{section}
\definecolor{othercolor}{gray}{0.80}
\definecolor{othercolorTwo}{rgb}{1,1,1}
\title{On the distribution of 
$\alpha p$ modulo one in quadratic number fields}
\subjclass[2010]{%
	Primary
	11J17; 
	Secondary
	11J71, 
	11N35, 
    11N36, 
	11L20, 
	11L07, 
	11K60.
}
\keywords{distribution modulo one, Diophantine approximation, sieve methods,  sums over primes, quadratic field, smoothed sum, Poisson summation}
\author{Stephan~Baier}
\address{Stephan~Baier\\%
	Ramakrishna Mission Vivekananda Educational Research Institute\\%
	Department of Mathematics\\%
	G.\ T.\ Road, PO~Belur Math, Howrah, West Bengal~711202\\%
	India}
\email{stephanbaier2017@gmail.com}
\urladdr{https://www.researchgate.net/profile/Stephan\_Baier2}
\author{Dwaipayan~Mazumder}
\address{Dwaipayan~Mazumder\\%
	Ramakrishna Mission Vivekananda Educational Research Institute\\%
	Department of Mathematics\\%
	G.\ T.\ Road, PO~Belur Math, Howrah, West Bengal~711202\\%
	India}
\email{dwmaz.1993@gmail.com}
\author{Marc~Technau}
\address{Marc~Technau\\%
	Graz University of Technology\\%
	Institute of Analysis and Number Theory\\%
	Kopernikusgasse~24/II\\%
	8010~Graz\\%
	Austria}
\email{mtechnau@math.tugraz.at}
\urladdr{https://www.math.tugraz.at/\string~mtechnau/}
\begin{document}

\begin{abstract}
	We investigate the distribution of $\alpha p$ modulo one in quadratic number fields $\mathbb{K}$ with class number one, where $p$ is restricted to prime elements in the ring of integers of $\mathbb{K}$. Here we improve the relevant exponent $1/4$ obtained by the first and third named authors for imaginary quadratic number fields~\cite{BT} and by the first and second named authors for real quadratic number fields~\cite{BM} to $7/22$. This generalizes a result of Harman ~\cite{HarZi} who obtained the same exponent $7/22$ for $\mathbb{Q}(i)$ by extending his method which gave this exponent for $\mathbb{Q}$~\cite{harman1996on-the-distribu}.  Our proof is based on an extension of his sieve method to arbitrary number fields. Moreover, we need an asymptotic evaluation of certain smooth sums over prime ideals appearing in~\cite{BM}, for which we use analytic properties of Hecke L-functions with Gr\"o\ss encharacters. 
\end{abstract}

\maketitle


\tableofcontents

\section{Introduction}
The distribution of $\alpha p$ modulo one, where $p$ runs over the rational primes and $\alpha$ is a fixed irrational real, has received a lot of attention. Throughout this article, for $x$ a real number, we denote by $||x||$ the distance of $x$ to the nearest integer. It is natural to ask for which exponents $\nu>0$ one can establish the infinitude of primes $p$ satisfying $||\alpha p||\le p^{-\nu}$. The latest record in this regard is Kaisa Matom\"aki's landmark result $\nu=1/3-\varepsilon$ (see~\cite{Mat}) which presents the limit of currently known technology. An earlier result in this direction was obtained by Harman (see~\cite{harman1996on-the-distribu}) who proved that $\nu=7/22$ is an admissible exponent using his sieve method which has become a standard tool in analytic number theory. Recently, Harman extended his method to Gaussian integers~\cite{HarZi}. The first and third-named authors investigated this problem more generally in the context of imaginary quadratic fields of class number one~\cite{BT}, establishing an exponent which corresponds to $\nu=1/4$. An analog of this result was proved by the first and second-named authors for real quadratic fields in~\cite{BM}. In this article, we improve the exponent 1/4 in the two last-mentioned results to Harman's exponent 7/22. In fact, this exponent appears halfed in the relevant results below, Theorem~\ref{imag}, Corollary~\ref{imagcor} and Theorem~\ref{realcase}, because here we deal with two-dimensional approximation problems. To this end, and with other applications in mind, we shall prove weighted versions of Harman's asymptotic and 
lower bound sieves for general number fields, where integers are replaced by ideals of the relevant rings of algebraic integers. The lower bound sieve will give rise to our improvement. In fact, versions of Harman's sieve for quadratic number fields would suffice for our application, but it is not much of an extra effort to make them work for general number fields. 

\section{New results on restricted Diophantine approximation for quadratic number fields}
Before we work out said versions of Harman's sieve, we formulate our aforementioned improvements on Diophantine approximation with primes in the context of quadratic number fields. These results will be proved in section~\ref{quadproofs} by applying our number field version of Harman's lower bound sieve. Throughout this article, we use the following notations.

\begin{itemize}
\item By $\mathbb{K}$, we denote a number field.
\item By $\mathcal{O}$, we denote the ring of algebraic integers in $\mathbb{K}$. 
\item By $\mathcal{N}(a)$, we denote the norm over $\mathbb{Q}$ of $a\in \mathcal{O}$ and by $\mathcal{N}(\mathfrak{a})$ the norm of an ideal $\mathfrak{a}$ in $\mathcal{O}$.
\item We use the Landau notation $O(...)$ and Vinogradov's notation $\ll$ in the usual way. All implied constants may depend on the number field $\mathbb{K}$.
\end{itemize}
 
\subsection{Imaginary quadratic number fields}
In this subsection, assume that $\mathbb{K}$ is an imaginary quadratic number field. Then  
$\mathcal{O}$ is a free $\mathbb{Z}$-module of rank $2$ and there is some $\omega\in\mathcal{O}$ such that $\{1,\omega\}$ is a $\mathbb{Z}$-basis of $\mathcal{O}$. 
Since, by assumption, $\mathbb{K}\not\subseteq\mathbb{R}$, it follows that $\Im\omega\neq 0$.
In particular, $\{1,\omega\}$ turns out to be an $\mathbb{R}$-basis of $\mathbb{C}$ and, given some $\varrho\in\mathbb{C}$, we write 
$\Re_{\omega}(\varrho)$ and $\Im_{\omega}(\varrho)$ for the unique real numbers satisfying
\[
	\varrho = \Re_{\omega}(\varrho) + \Im_{\omega}(\varrho)\omega.
\]
With this notation, we put
\[
	||\varrho||_\omega
	= \max\{ ||\Re_{\omega}(\varrho)||, ||\Im_{\omega}(\varrho)|| \}.
\]
We shall prove the following.

\begin{Theorem} \label{imag}
Let $\mathbb{K}\subset\mathbb{C}$ be an imaginary quadratic number field of class number one and let $\mathcal{O}$ be its ring of integers. 
Suppose that $\alpha$ is a complex number such that $\alpha\notin\mathbb{K}$. Then, for any $\varepsilon>0$, there exists an infinite sequence of prime 
elements $p\in\mathcal{O}$ such that
	\begin{equation}\label{mainresult}
		||p\alpha||_\omega \leq \mathcal{N}(p)^{-7/44+\varepsilon}.
	\end{equation}
\end{Theorem}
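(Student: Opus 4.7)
The plan is to reduce the Diophantine statement to a positive lower bound for a counting sum of primes in a dyadic box weighted by a smooth torus-detector function, and then to apply the number-field version of Harman's lower bound sieve developed later in the paper. Fix a large parameter $X$, which will ultimately be taken along an infinite sequence, and set $\delta := X^{-7/44+\varepsilon}$. I would choose a smooth, nonnegative function $\psi\colon (\mathbb{R}/\mathbb{Z})^2\to\mathbb{R}_{\geq 0}$ supported on $\{(x,y) : \max(\|x\|,\|y\|)\leq \delta\}$ with $\psi(0,0)>0$ and rapidly decaying Fourier coefficients. It then suffices to produce an infinite sequence of $X\to\infty$ for which
\[
S(X) := \sum_{\substack{p\in\mathcal{O}\text{ prime}\\ X\le\mathcal{N}(p)<2X}} \psi\bigl(\Re_\omega(p\alpha),\Im_\omega(p\alpha)\bigr) > 0.
\]
Expanding $\psi$ as a Fourier series on the two-torus splits $S(X)$ into a main term $\hat\psi(0,0)\cdot\#\{p:X\le\mathcal{N}(p)<2X\}$, of size $\delta^2 X/\log X$ by the prime ideal theorem, plus twisted sums
\[
T(X;h_1,h_2) = \sum_{X\le\mathcal{N}(p)<2X} \eOpname\!\bigl(h_1\Re_\omega(p\alpha)+h_2\Im_\omega(p\alpha)\bigr)
\]
over nonzero frequencies $(h_1,h_2)$, which the rapid decay of $\hat\psi$ truncates effectively to $|h_1|,|h_2|\le H:=\delta^{-1}X^\varepsilon$.

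For fixed nonzero $(h_1,h_2)$, the $\mathbb{R}$-linear functional $a\mapsto h_1\Re_\omega(a\alpha)+h_2\Im_\omega(a\alpha)$ on $\mathbb{C}$ can be rewritten as $\Re_\omega(\beta a)$ for some $\beta=\beta(\alpha,\omega,h_1,h_2)\in\mathbb{C}^\times$. I would then invoke the number-field lower bound sieve to express the prime indicator (restricted to $X\le\mathcal{N}(p)<2X$) as a linear combination of Type I convolutions of length $\leq X^\theta$ and Type II convolutions of lengths in $[X^\theta,X^{1-\theta}]$, each twisted by the additive character $\eOpname(\Re_\omega(\beta\,\cdot\,))$, where $\theta$ is the sieve parameter to be optimized. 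The Type I estimates amount to bounding linear exponential sums over ideals and are handled by Poisson summation over $\mathcal{O}$, with admissible rational approximations to $\beta$ supplied by a number-field Dirichlet theorem together with the hypothesis $\alpha\notin\mathbb{K}$.

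The main obstacle, and the step that pins down the exponent, is the Type II estimate. After Cauchy--Schwarz in one ideal variable, one must bound
\[
\sum_{\mathfrak{a}\sim A} \eOpname\!\bigl(\Re_\omega\bigl(\beta\,\gamma_{\mathfrak{a}}(\gamma_{\mathfrak{b}_1}-\gamma_{\mathfrak{b}_2})\bigr)\bigr),
\]
uniformly for generic $\mathfrak{b}_1,\mathfrak{b}_2\sim B$ with $AB\asymp X$ and $A$ in the Type II window, where $\gamma_{\mathfrak{c}}$ denotes a generator of the ideal $\mathfrak{c}$ (available since $\mathbb{K}$ has class number one). This inner sum is again tackled by Poisson summation on $\mathcal{O}$, with the remaining error control supplied by analytic information on Hecke $L$-functions with Größencharacters as in the companion paper \cite{BM}. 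Balancing the Type I and Type II bounds via the optimization underlying Harman's original argument in \cite{harman1996on-the-distribu} yields a saving of $X^{7/22-\varepsilon}$ on each $T(X;h_1,h_2)$ over the trivial bound $O(X/\log X)$, which when summed over the $O(H^2) = O(\delta^{-2}X^{O(\varepsilon)})$ frequencies is dominated by the main term, so that $S(X)>0$ and the theorem follows.
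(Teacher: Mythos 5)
There is a genuine gap, and it is structural rather than technical. Your argument is the ``asymptotic'' route: expand the torus detector $\psi$ in a Fourier series, isolate the main term $\hat\psi(0,0)\,\pi_{\mathbb{K}}(X)\asymp\delta^{2}X/\log X$, and then beat the trivial bound on every twisted sum $T(X;h_1,h_2)$ over primes by a factor $X^{7/22-\varepsilon}$. Such a saving on individual exponential sums over prime elements is not available, and Harman's method does not supply it. The decomposition you invoke --- writing the prime indicator as a combination of Type I convolutions of length $\le X^{\theta}$ and Type II convolutions with one variable in $[X^{\theta},X^{1-\theta}]$ --- is a combinatorial identity of Vaughan/Heath-Brown type, not Harman's sieve, and no such identity exists with the ranges actually controlled here: the admissible Type II window is only $[x^{\theta-\iota},x^{1-2\theta}]$ (see \eqref{typeII}), which for $\theta=7/22$ is the narrow strip $[x^{7/22-\iota},x^{4/11}]$. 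This is precisely why the exponent obtainable by the asymptotic approach caps at $1/4$ (i.e.\ $1/8$ in this two-dimensional problem, the result of \cite{BT} that the present theorem improves). If each $T(X;h_1,h_2)$ could be bounded as you claim, one would obtain an asymptotic formula for your $S(X)$, and the entire lower-bound sieve machinery of the paper would be unnecessary.

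What the paper actually does is feed two smooth weights directly into the number-field version of Harman's \emph{lower bound} sieve (Theorem~\ref{lower}): a reference weight $\omega=\lambda W$ with $W(\mathfrak{n})$ a Gaussian-type function of $\mathcal{N}(\mathfrak{n})/N$, for which the prime sum asymptotics \eqref{keycond} are known, and the detector weight $\tilde\omega$ which picks out $n$ with $\norm{n\alpha}_{\omega}\lesssim\delta$. After verifying \eqref{keycond}--\eqref{newsmallandlarge} and the Type I/II hypotheses \eqref{typeI}, \eqref{typeII} in their restricted ranges (via \cite[Propositions 6.6 and 6.7]{BT}, with $\delta\ge x^{-\theta/2+12\varepsilon}$ and $\theta=7/22$ giving the exponent $7/44$), the sieve yields only the comparison \eqref{secondclaim}, i.e.\ $S_{\mathcal{O}}(\tilde\omega,\sqrt{x})\ge (C(\theta)+o(1))\,S_{\mathcal{O}}(\omega,\sqrt{x})$ with $C(7/22)>1/10$. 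The constant $C(\theta)<1$ quantifies exactly the information that is sacrificed because no asymptotic is available: positive Buchstab terms lacking Type II coverage are discarded, and the surviving terms are evaluated against the reference weight via Theorem~\ref{asymp} and Theorem~\ref{genweights}. Your proposal has no counterpart to this comparison step, and without it the exponent $7/44$ cannot be reached.
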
 

This improves~\cite[Theorem 2.1]{BT} in which the above result with the exponent $1/8$ in place of $7/44$ was established. As already pointed out in Remark (1) after \cite[Theorem 2.2.]{BT}, instead of considering the homogeneous condition $\norm{p\alpha}_\omega < \delta$ in the above theorem, one can also consider an inhomogeneous version, namely $\norm{p\alpha+\beta}_\omega < \delta$, where $\beta$ is an arbitrary complex number. Our methods should be capable of handling such shifts $\beta$ but we chose not to implement this for the sake of readability of this article.  

In the next subsection, we formulate a version of this theorem for real-quadratic fields which improves the main result in~\cite{BM}. Again, there are no fundamental constraints in working out an inhomogeneous version, but we don't deal with this in the present article. To see the connection to Theorem~\ref{imag} above clearly, we formulate this theorem in a slightly different form below. 

\begin{Corollary} \label{imagcor}
Under the conditions of Theorem~\ref{imag}, there exist infinitely many non-zero prime ideals $\mathfrak{p}$ in $\mathcal{O}$ such that 
\begin{equation*}
\left|\alpha-\frac{a}{p}\right|\le \mathcal{N}(\mathfrak{p})^{-1/2-7/44+\varepsilon} 
\end{equation*}
for some generator $p$ of $\mathfrak{p}$ and $a\in \mathcal{O}$. 
\end{Corollary}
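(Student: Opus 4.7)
The plan is to deduce Corollary~\ref{imagcor} directly from Theorem~\ref{imag} by unpacking the definition of $\norm{\cdot}_\omega$ and converting the resulting bound into an honest rational-approximation statement. First I would apply Theorem~\ref{imag} to obtain an infinite sequence of prime elements $p\in\mathcal{O}$ with $\norm{p\alpha}_\omega \leq \mathcal{N}(p)^{-7/44+\varepsilon}$. By the very definition of $\norm{\cdot}_\omega$, there exist integers $m,n\in\mathbb{Z}$ such that
\[
  \abs{\Re_{\omega}(p\alpha) - m},\ \abs{\Im_{\omega}(p\alpha) - n} \leq \mathcal{N}(p)^{-7/44+\varepsilon}.
\]
Setting $a := m + n\omega \in \mathcal{O}$ and expanding $p\alpha - a = (\Re_{\omega}(p\alpha)-m) + (\Im_{\omega}(p\alpha)-n)\,\omega$ in the $\mathbb{R}$-basis $\{1,\omega\}$ of $\mathbb{C}$, the triangle inequality yields $\abs{p\alpha - a} \leq (1+\abs{\omega})\,\mathcal{N}(p)^{-7/44+\varepsilon}$.

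Next I would exploit that $\mathbb{K}$ is imaginary quadratic: complex conjugation coincides with the non-trivial element of $\operatorname{Gal}(\mathbb{K}/\mathbb{Q})$, so $\mathcal{N}(p) = p\bar{p} = \abs{p}^{2}$ and hence $\abs{p} = \mathcal{N}(p)^{1/2}$. Dividing the previous bound by $\abs{p}$ therefore produces
\[
  \abs{\alpha - a/p} \leq (1+\abs{\omega})\,\mathcal{N}(p)^{-1/2-7/44+\varepsilon}.
\]
Since $\mathbb{K}$ has class number one, each $p$ generates a principal prime ideal $\mathfrak{p}=(p)$ with $\mathcal{N}(\mathfrak{p})=\mathcal{N}(p)$, and the finiteness of the unit group $\mathcal{O}^{\times}$ (a feature peculiar to the imaginary quadratic case) guarantees that the infinitely many prime elements furnished by Theorem~\ref{imag} give rise to infinitely many distinct prime ideals $\mathfrak{p}$. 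Finally, for all but finitely many such $\mathfrak{p}$ the constant $1+\abs{\omega}$ can be absorbed into $\mathcal{N}(\mathfrak{p})^{\varepsilon}$ by enlarging $\varepsilon$ negligibly, which closes the argument.

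There is no substantive obstacle: the corollary is essentially a repackaging of Theorem~\ref{imag}, and the only arithmetic input beyond the theorem is the identity $\abs{p}^{2} = \mathcal{N}(p)$, which is precisely what converts the saving $\mathcal{N}(p)^{-7/44+\varepsilon}$ measured in $\omega$-coordinates into the saving $\mathcal{N}(\mathfrak{p})^{-1/2-7/44+\varepsilon}$ phrased in the usual complex modulus normalised by the ideal norm. The only bookkeeping one must do carefully is the passage from prime elements to distinct prime ideals via the finite unit group.
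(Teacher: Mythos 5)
Your proposal is correct and follows essentially the same route as the paper: both deduce the statement from Theorem~\ref{imag} by bounding the Euclidean distance $\lvert p\alpha-a\rvert$ in terms of $\lVert p\alpha\rVert_\omega$ (your explicit triangle-inequality step with $a=m+n\omega$ is exactly the paper's estimate $\lVert z\rVert\ll_\omega\lVert z\rVert_\omega$), dividing by $\lvert p\rvert=\mathcal{N}(p)^{1/2}$, absorbing the constant by adjusting $\varepsilon$, and invoking the finiteness of the unit group to pass from infinitely many prime elements to infinitely many distinct prime ideals.
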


\begin{proof}
For any complex number $z$, we now denote by $||z||$ the Euclidean distance of $z$ to the nearest algebraic integer $a\in \mathcal{O}$. This extends our definition of $||x||$ for real $x$ from the beginning of this paper. Then
$$
||z||\ll_{\omega} ||z||_{\omega}
$$
for any complex $z$. 
Hence, using Theorem~\ref{imag}, we 
have 
$$
||p\alpha||\le \mathcal{N}(p)^{-7/44+2\varepsilon} 
$$
for infinitely many prime elements $p$ in $\mathcal{O}$. Therefore,
$$
|p\alpha-a|\le \mathcal{N}(p)^{-7/44+2\varepsilon}
$$
for infinitely pairs $(p,a)\in \mathcal{O}\times \mathcal{O}$ with $p$ prime, which is equivalent to 
$$
\left|\alpha-\frac{a}{p}\right|\le \mathcal{N}(p)^{-1/2-7/44+2\varepsilon}.
$$
The result now follows upon redefining $\varepsilon$ and noting that $\mathcal{O}$ has a finite group of units. 
\end{proof}

\subsection{Real quadratic number fields} 
In this subsection, we assume, as in~\cite{BM}, that $\mathbb{K}=\mathbb{Q}(\sqrt{d}) \subset \RR$ is a real quadratic number field of class number one, where $d>0$ is square-free and $d\equiv 3\bmod{4}$. Moreover, we use the following notations. 

\begin{itemize} 
\item We denote the two embeddings of $\mathbb{K}$, given by the identity and conjugation, by 
$$
\sigma_1(\alpha+\beta\sqrt{d})\coloneqq\alpha+\beta\sqrt{d}
$$ 
and 
$$
\sigma_2(\alpha+\beta\sqrt{d})\coloneqq\alpha-\beta\sqrt{d}.
$$
\item We write $\sigma(\mathbb{K})\coloneqq\{(\sigma_1(\gamma),\sigma_2(\gamma)) : \gamma\in \mathbb{K}\}$.
\item $(x_1,x_2)$ is a pair of real numbers which does belong to $\sigma(\mathbb{K})$. 
\end{itemize}

To formulate our result in real quadratic fields, we need to introduce a notion of ``good'' and ``bad'' pairs $(x_1,x_2)$. It was proved in~\cite{BM} that almost all elements of $\mathbb{R}^2\setminus \sigma(\mathbb{K})$ are good. 
By a version of Dirichlet's approximation theorem for $\mathbb{K}$ (see~\cite[Corollary~3]{BM}), there are infinitely many natural numbers $W$ such that 

\begin{equation} \label{theabove}
\left|x_i- \frac{\sigma_i(u+v\sqrt{d})}{\sigma_i(f+g\sqrt{d})}\right|\le \frac{1}{|\mathcal{N}(f+g\sqrt{d})|} \quad \mbox{ for }
i=1,2
\end{equation}
for suitable $u,v,f,g\in \mathbb{Z}$ with $u+v\sqrt{d}$ and $f+g\sqrt{d}$ coprime in $\mathcal{O}$ and $|\mathcal{N}(f+g\sqrt{d})|=W$. 
In the following, we define what we understand as $\eta$-good, good and bad pairs $(x_1,x_2)$.

\begin{Definition} \label{goodbad} For $\eta>0$, we call $(x_1,x_2)$ $\eta$-good if there is an infinite sequence of natural numbers $W$ such that~\eqref{theabove} holds 
with  $u+v\sqrt{d}$ and $f+g\sqrt{d}$ coprime in $\mathcal{O}$,
$$
|{\mathcal{N}(f+g\sqrt{d})}|=W \quad \mbox{ and }  \quad 
{\rm gcd}(f,g)=O\left(W^{\eta}\right),
$$
where {\rm gcd}$(f,g)$ is meant to be the largest natural number dividing both $f$ and $g$. 
We call $(x_1,x_2)$ good if it is $\eta$-good for all $\eta>0$. We call $(x_1,x_2)$ bad if it is not good.
\end{Definition}

To avoid confusions, we note that in~\cite{BM}, we have used the notation $\mathcal{N}(q)$ for the {\it modulus} of the norm of $q$.  
Now we are ready to formulate our main result on the real quadratic case. 

\begin{Theorem} \label{realcase}
Assume that $\mathbb{K}=\mathbb{Q}(\sqrt{d})$ has class number one, where $d$ is a positive square-free integer satisfying $d\equiv 3 \bmod{4}$. Let $\varepsilon$ be any positive real number. Suppose further that $(x_1,x_2)\in \mathbb{R}^2\setminus \sigma(\mathbb{K})$ is $\eta$-good
in the above sense. Set
\begin{equation} \label{nueq}
\nu\coloneqq\frac{7/44-\eta}{1+2\eta}.
\end{equation}
Then there exist infinitely many non-zero prime ideals $\mathfrak{p}$ in the ring $\mathcal{O}$ of integers of $\mathbb{K}$
such that 
\begin{equation*}
\left|x-\frac{\sigma_i(a)}{\sigma_i(p)}\right|\le \mathcal{N}(\mathfrak{p})^{-1/2-\nu+\varepsilon} \quad \mbox{ for } i=1,2
\end{equation*}
for some generator $p$ of $\mathfrak{p}$ and $a\in \mathcal{O}$. 
If $(x_1,x_2)$ is good, then the above holds with $\nu=7/44$. 
\end{Theorem}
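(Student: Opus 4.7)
The plan is to reduce Theorem~\ref{realcase} to an application of our number-field version of Harman's lower bound sieve, following the same pattern as for Theorem~\ref{imag} but adapted to the real quadratic setting and the extra $\eta$-good hypothesis. First, I invoke the $\eta$-good property to fix, for arbitrarily large $W$, a coprime pair $u+v\sqrt{d},\,f+g\sqrt{d} \in \mathcal{O}$ with $|\mathcal{N}(f+g\sqrt{d})|=W$ and $\gcd(f,g) \ll W^\eta$ satisfying~\eqref{theabove}. Using the identity
$$x_i - \frac{\sigma_i(a)}{\sigma_i(p)} = \left( x_i - \frac{\sigma_i(u+v\sqrt{d})}{\sigma_i(f+g\sqrt{d})} \right) + \frac{\sigma_i\bigl(p(u+v\sqrt{d}) - a(f+g\sqrt{d})\bigr)}{\sigma_i(p)\,\sigma_i(f+g\sqrt{d})},$$
the first summand contributes $O(1/W)$ by~\eqref{theabove}, while the second summand is small precisely when $p(u+v\sqrt{d})$ lies in a small box modulo the ideal $(f+g\sqrt{d})\mathcal{O}$. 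The desired inequality thus translates into a congruence-plus-localization condition on prime generators $p$ of norm $\asymp X$, with $X$ to be chosen.

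Next, I would detect this condition via Fourier analysis on the finite quotient $\mathcal{O}/(f+g\sqrt{d})\mathcal{O}$ combined with Poisson summation on the lattice $\sigma(\mathcal{O}) \subset \mathbb{R}^2$. The zero-frequency contribution gives the expected main term, proportional to the prime-ideal counting function; the nontrivial frequencies produce Type-I and Type-II bilinear sums over prime ideals that are controlled using the number-field lower bound sieve developed later in the paper. In Harman's one-dimensional work~\cite{harman1996on-the-distribu}, this machinery yields the exponent $7/22$. Here the effective exponent halves to $7/44$ because smallness is required simultaneously for both embeddings $\sigma_1, \sigma_2$, doubling the codimension of the target. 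Optimizing the choice of $X$ against the target size $\delta$, while accounting for the fact that the factor $\gcd(f,g) \ll W^\eta$ degrades the effective sieving modulus from $W$ to roughly $W/\gcd(f,g)^2$, produces the displayed value $\nu = (7/44 - \eta)/(1+2\eta)$. For a good pair $(x_1,x_2)$ one lets $\eta \to 0$ to recover $\nu = 7/44$.

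The main obstacle will be establishing the non-trivial savings required by the Type-II bilinear sum estimate in Harman's sieve in this number field setting. The infinite unit group of $\mathcal{O}$ in the real quadratic case, together with the need to smooth the characteristic function of primes in a way compatible with both embeddings, complicates the usual Cauchy--Schwarz and Poisson summation arguments; controlling the resulting error terms demands the asymptotic evaluation of smooth sums over prime ideals via Hecke $L$-functions with Gr\"o{\ss}encharacters flagged in the abstract.
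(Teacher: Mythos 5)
Your high-level strategy (Dirichlet approximation via the $\eta$-good hypothesis, reduction to a congruence-plus-localization condition on prime generators, detection by characters modulo $(f+g\sqrt{d})$ and Poisson summation, then Harman's sieve) is indeed the skeleton of the argument, but almost all of it already lives in \cite{BM}; the paper's proof of Theorem~\ref{realcase} is a short delta on top of that reference, and your proposal misplaces where the new work actually is. The Type~I and Type~II bilinear estimates are \emph{not} re-established here --- they are taken verbatim from \cite{BM} (with the single cosmetic change that $x^{3/4}$ in \cite[eq.~(87)]{BM} becomes $x^{1-\theta+2\varepsilon}$). The entire improvement from $\nu=(1/8-\eta)/(1+2\eta)$ to $\nu=(7/44-\eta)/(1+2\eta)$ comes from replacing Harman's \emph{asymptotic} sieve (which forces $\theta=1/4$) by the \emph{lower bound} sieve of Theorem~\ref{lower}, which tolerates Type~II information only in the narrower range $[x^{\theta-\iota},x^{1-2\theta}]$ with $\theta=7/22$ at the cost of a positive constant $C(7/22)>1/10$ in the lower bound. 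So the "main obstacle" you flag --- proving the Type~II savings --- is not the obstacle at all; the genuinely new ingredients are (i) the number-field lower bound sieve itself and (ii) the verification of conditions \eqref{keycond}--\eqref{newsmallandlarge} for the weights $W$, $\omega$, $\tilde\omega$ of \cite{BM}, in particular the asymptotic \eqref{keycond} for smooth sums over prime ideals, which is where the Hecke $L$-functions with Gr\"o{\ss}encharacters enter (to handle the infinite unit group). You do gesture at this last point, but only as an afterthought.

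There is also a logical inversion you should fix: you write that the nontrivial frequencies "produce Type-I and Type-II bilinear sums \ldots\ that are controlled using the number-field lower bound sieve." The direction is the opposite: the sieve \emph{consumes} the bilinear estimates as hypotheses \eqref{typeI}--\eqref{typeII} and outputs the lower bound \eqref{secondclaim}; the bilinear sums themselves are bounded by harmonic analysis, not by the sieve. Finally, the optimization is asserted rather than carried out: the paper chooses $x=(\delta W)^2$ (so $\delta=x^{1/2}W^{-1}$), obtains $\nu=\min\bigl\{(\theta/2-\eta)/(1+2\eta),\,(1/4-\eta/2)/(3/2+\eta)\bigr\}$, and observes that for $\theta\le 1/3$ the first term is the minimum, giving \eqref{nueq} at $\theta=7/22$; your heuristic that $\gcd(f,g)$ "degrades the modulus to $W/\gcd(f,g)^2$" is plausible but does not by itself produce this expression, and the claim that one "lets $\eta\to 0$" for good pairs needs the uniformity in $\eta$ that the explicit bound \eqref{diffes} provides.
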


This result provides an analog of Corollary~\ref{imagcor} for good $(x_1,x_2)$ in real-quadratic fields and improves~\cite[Theorem 5]{BM}, in which Theorem~\ref{realcase} with 
$$
\nu=\frac{1/8-\eta}{1+2\eta}
$$
 in place of~\eqref{nueq} for $\eta$-good pairs $(x_1,x_2)$ and $\nu=1/8$ in place of $\nu=7/44$ for good pairs $(x_1,x_2)$ was established.\\ \\
{\bf Remark 1:}  As remarked in~\cite{BM}, the condition $d\equiv 3 \bmod{4}$ in Theorem~\ref{realcase} is for convenience of proof but should not be essential. (Note that if $d\equiv 3 \bmod{4}$, then the ring of integers equals $\mathbb{Q}[\sqrt{d}]$, which is particularly convenient. Moreover, $d$ is odd which helps at some place in~\cite{BM}.) It should not be difficult to remove this condition but we have not worked out the details. In~\cite{BM}, we also described how to remove the ``class number 1'' condition in the real quadratic case. Similar arguments apply in the imaginary quadratic case. If the class number is not equal to 1, then one needs to assume in addition that the prime ideals $\mathfrak{p}$ are principal. \\ \\
{\bf Remark 2:} It may be conjectured that all pairs $(x_1,x_2)\in \mathbb{R}^2\setminus \sigma(\mathbb{K})$ are good. Indeed, the first and second named authors have observed that this would follow from a version of Chowla's conjecture on the least prime in an arithmetic progression for $\mathbb{K}$. We will address this in upcoming work. \\ \\
{\bf Remark 3:} We would like to point out a small oversight in \cite[subsection 9.2.]{BM} which can be easily fixed and doesn't affect the results in \cite{BM}. After equation (82) in the said article, we deduced that $t|2\sqrt{d}$. However, in what follows, there are more cases to be considered than just $t\approx 1$ and $t\approx \sqrt{d}$. In fact, all possible cases of divisors $t$ of $2\sqrt{d}$ in $\mathcal{O}_K$ such that $t$ is not divisible by 2 can occur. Recall that we assumed the class number to be 1. If $t$ is of the said form, then we have
$$
t\approx P_1\cdots P_g,
$$ 
where $P_1,...,P_g$ are distinct prime elements of $O_K$ with $P_i^2$ associated to a rational prime $p_i$ dividing $2d$ (note that all primes dividing $2d$ ramify since the discriminant of $K$ equals $4d$ if $d\equiv 3\bmod{4}$). 
Now let $d':=p_1\cdots p_g$, $d_1:=$gcd$(d,d')$ and $d_2=d/d_1$. In a similar way as in \cite[subsection 9.2. after (84)]{BM}, we deduce that gcd$(2a',W')=d'$. Now, in addition to the case (54) in \cite[subsection 7.4.]{BM} (which corresponds to $t\approx 1$), we get the more general case 
$$
a=Za',\quad b=Zb', \quad W=Z^2W', \quad \mbox{gcd}(2a',W')=d'
$$    
in place of (55). Similarly as in \cite[subsection 7.4.]{BM}, we now write $a''=a'/d_1$ and $W''=W'/d'$ and reduce the congruence (56) to 
$$
(a'')^2d_1\equiv (b')^2d_2 \bmod{ W''}.
$$
(Here we divide the said congruence by $d_1$ and the resulting modulus $W'/d_1$ further by $d'/d_1=1$ or $2$). Multiplying the above with $d_2$, we get
$$
(a'')^2d\equiv (b'd_2)^2 \bmod{ W''}.
$$
Moreover, from (58), we deduce that
$$
b'd_2B\equiv a''Ad \bmod{W''}.
$$
Noting gcd$(a'',W'')=1$, we transform this into
$$
Ad \equiv b'd_2\overline{a''}B\bmod{W''}.
$$
Now the treatment of this case finishes exactly as in \cite[subsection 7.4.]{BM} with $b'd_2$ in place of $b'$, i.e. we make the replacements
\begin{equation*}
\begin{split}
W' & \rightarrow W''\\
a' & \rightarrow b'd_2\\
b' & \rightarrow a''\\
A & \rightarrow B\\
B & \rightarrow Ad.
\end{split}
\end{equation*}
{\bf Acknowledgements.} The authors would like to thank the anonymous referee for his valuable comments. The two first-named authors would like to thank the Ramakrishna Mission Vivekananda Educational and Research Institute for an excellent work environment. 

\section{Harman's asymptotic sieve for number fields}
\subsection{Notations}
In this and the next section, we take $\mathbb{K}$ to be a general number field. No extra assumptions on $\mathbb{K}$ are required to formulate and prove our sieve results. We shall use the following notations.
\begin{itemize}
\item $\mathcal{I}$ denotes the set of non-zero ideals of $\mathcal{O}$.
\item $\mathbb{P}$ denotes the set of all non-zero prime ideals of $\mathcal{O}$.
\item We fix some total order $\prec$ on $\mathbb{P}$ such that $\mathfrak{p}_2 \prec \mathfrak{p}_1$ whenever $\mathcal{N}(\mathfrak{p}_2)< \mathcal{N}(\mathfrak{p}_1)$. 
Many such orderings exist, but we are not concerned about the precise order.
\item If $z>0$, we denote by $\mathfrak{Q}(z)$ the smallest prime ideal $\mathfrak{q}$ with respect to the ordering $\prec$ such that $\mathcal{N}(\mathfrak{q})\ge z$.
\item For $z>0$ and $\mathfrak{q}\in \mathbb{P}$, we set 
\begin{gather*}
	\mathbb{P}(z) \coloneqq \{\mathfrak{p}\in\mathbb{P}: \mathcal{N(\mathfrak{p})}<z\}, \quad \mathbb{P}(\mathfrak{q})\coloneqq \{\mathfrak{p}\in\mathbb{P}: \mathfrak{p}\prec \mathfrak{q}\} \\
	\Pi(z) \coloneqq \prod_{\mathfrak{p}\in\mathbb{P}(z)} \mathfrak{p} \quad 
\mbox{and} \quad \Pi(\mathfrak{q}) \coloneqq \prod_{\mathfrak{p}\in\mathbb{P}(\mathfrak{q})} \mathfrak{p}.
\end{gather*}
By convention, $\Pi(z)=1$ if $\mathbb{P}(z)$ is empty and $\Pi(\mathfrak{q})=1$ if $\mathbb{P}(\mathfrak{q})$ is empty (i.e., $\mathfrak{q}$ is the smallest prime ideal with respect to the ordering $\prec$). 
\item For any two $\mathfrak{a},\mathfrak{b}\in \mathcal{I}$, we write $(\mathfrak{a},\mathfrak{b})=1$ if $\mathfrak{a}$ and $\mathfrak{b}$ are coprime.
\item For any function $w : \mathcal{I} \rightarrow \mathbb{C}$, $\mathfrak{r}\in \mathcal{I}$, $z>0$ and $\mathfrak{q}\in \mathbb{P}$, we set
\begin{equation} \label{Srdef}
S_{\mathfrak{r}}(w,z)\coloneqq\sum_{\substack{\mathfrak{a}\in \mathcal{I}\\(\mathfrak{a},\Pi(z))=1}} w(\mathfrak{ar}) \quad \mbox{and} \quad 
\Phi_{\mathfrak{r}}(w,\mathfrak{q})\coloneqq\sum_{\substack{\mathfrak{a}\in \mathcal{I}\\(\mathfrak{a},\Pi(\mathfrak{q}))=1}} w(\mathfrak{ar}).
\end{equation}
\item $d_k(\mathfrak{a})$ denotes the number of ways to write the ideal $\mathfrak{a}\in \mathcal{I}$ as a product of $k$ (not necessarily distinct) ideals.
\item If $(C)$ is a statement which is either true or false, then we write
$$
1_{(C)} \coloneqq \begin{cases} 1 \mbox{ if the } (C) \mbox{ is true,}\\ 0 \mbox{ otherwise.} \end{cases}
$$
If $M$ is a set, then we write
$$
1_{M}(x) \coloneqq 1_{(x\in M)} = \begin{cases} 1 \mbox{ if } x\in M,\\ 0 \mbox{ otherwise.} \end{cases}
$$
\item We write $r\sim R$ to indicate that $R\le r< R_1$ for some $R_1\in (R,2R]$. 
\end{itemize}

\subsection{The sieve result}
In~\cite[Theorem~3.1]{BT}, Harman's asymptotic sieve~\cite[Lemma~2]{harman1996on-the-distribu} was extended to imaginary quadratic fields of class number one 
by the first- and third-named 
authors, with an additional weight function included. In~\cite[Theorem~12]{BM}, this weighted sieve was further generalized by the first- 
and second-named authors into a version with ideals in arbitrary
quadratic number fields. Now we establish a weighted version of Harman's asymptotic sieve with ideals in general number fields (without restriction on the class number). As in~\cite[section~13]{BM}, 
we follow closely the lines of~\cite[section~7]{BT}. There is only one small technical issue which makes things slightly more complicated: Different prime ideals may have equal norms. This is handled by a certain decomposition in~\eqref{primedec1} and~\eqref{primedec2} below.  

First, we state a basic lemma---sometimes dubbed `cosmetic surgery'~\cite{harman1996on-the-distribu}---which will be used to separate summations over variables whose sizes are linked. 

\begin{Lemma} \label{disentangle}
For any two distinct real numbers $\rho,\gamma >0 $ and $T\geq1$ one has 
\begin{align*}
   \Bigg|1_{\gamma<\rho}-\frac{1}{\pi}\int_{-T}^{T}e^{i\gamma t} \frac{\sin(\rho t)}{t}\Bigg| \ll \frac{1}{T|\gamma-\rho|} ,
\end{align*}
where the implied constant is absolute.
\end{Lemma}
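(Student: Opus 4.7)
The plan is to evaluate the integral in closed form over all of $\mathbb{R}$ and then separately estimate the truncation error $\int_{|t|>T}$. Writing $\sin(\rho t) = (e^{i\rho t}-e^{-i\rho t})/(2i)$, the integrand becomes
\[
e^{i\gamma t}\,\frac{\sin(\rho t)}{t} = \frac{1}{2it}\bigl(e^{i(\gamma+\rho)t} - e^{i(\gamma-\rho)t}\bigr),
\]
which is bounded near $t=0$ (the apparent singularities cancel). First I would recall the principal-value identity $\int_{-\infty}^{\infty} e^{i\alpha t}/t\,dt = i\pi\sgn(\alpha)$ for $\alpha\neq 0$. Applied to both exponentials, this yields
\[
\frac{1}{\pi}\int_{-\infty}^{\infty} e^{i\gamma t}\frac{\sin(\rho t)}{t}\,dt = \tfrac{1}{2}\bigl(\sgn(\gamma+\rho) - \sgn(\gamma-\rho)\bigr),
\]
which equals $1$ when $\gamma<\rho$ and $0$ when $\gamma>\rho$, since $\gamma,\rho>0$. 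This matches the indicator $1_{\gamma<\rho}$ exactly, so it only remains to control the tail.

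Next I would bound the tail $\frac{1}{\pi}\int_{|t|>T} e^{i\gamma t}\sin(\rho t)/t\,dt$. After the same splitting, this reduces to estimating integrals of the form $\int_T^{\infty} e^{i\alpha t}/t\,dt$ (and the mirror tail at $-\infty$) with $\alpha = \gamma\pm\rho$. Integration by parts with $u = 1/t$ and $dv = e^{i\alpha t}\,dt$ gives
\[
\int_T^{\infty}\frac{e^{i\alpha t}}{t}\,dt = -\frac{e^{i\alpha T}}{i\alpha T} + \int_T^{\infty}\frac{e^{i\alpha t}}{i\alpha t^{2}}\,dt,
\]
whose modulus is at most $2/(|\alpha| T)$. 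Summing the four contributions, the truncation error is $O\bigl(1/(T|\gamma+\rho|) + 1/(T|\gamma-\rho|)\bigr)$.

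Finally, since $\gamma,\rho>0$ one has $|\gamma+\rho|\geq|\gamma-\rho|$, so both error terms are dominated by $O(1/(T|\gamma-\rho|))$, which is exactly the claimed bound with an absolute implied constant. There is no serious obstacle: the only mild subtlety is that the full-line integral must be read as a Cauchy principal value at the origin, but this issue disappears for the truncated integral because the integrand extends continuously to $t=0$. The entire argument is robust and does not require the hypotheses on $\gamma,\rho$ beyond positivity and distinctness.
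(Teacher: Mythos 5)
Your argument is correct: the paper itself gives no proof, only a citation to \cite[Lemma~2.2]{harman2007primedetectingsieves}, and the standard proof there is essentially the computation you give (the Dirichlet discontinuous-integral identity for the full line plus an integration-by-parts bound of size $O(1/(T|\gamma\pm\rho|))$ for the truncated tails, with $|\gamma+\rho|\ge|\gamma-\rho|$ absorbing the first term). You also correctly handle the only delicate point, namely that the principal value at the origin is needed only for the two split exponentials and not for the continuous combined integrand.
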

\begin{proof}
See, for instance,~\cite[Lemma~2.2]{harman2007primedetectingsieves}.
\end{proof}

Our main result in this section is the following. 

\begin{Theorem}[Harman's asymptotic sieve for $\mathcal{I}$] \label{asymp} Let $x\geq3$ be real and let $\omega,\Tilde{\omega}:\mathcal{I}\Rightarrow \mathbb{R}_{\ge 0}$ 
be two 
functions such that, for both $w=\omega$ and $w=\Tilde{\omega}$,
  \begin{align} \label{(2.1)}
  \sum_{\mathfrak{n}\in \mathcal{I}} d_5(\mathfrak{n})w(\mathfrak{n})\leq X 
\end{align}
for $X\geq 1$. Suppose further one has $Y>1,0\le \iota\le \mu<1,0<\kappa\leq 1/2$ and $M\in [x^\mu,x)$ such that 
for any sequences $(a_\mathfrak{a})_{\mathfrak{a} \in \mathcal{I}}$,$(b_\mathfrak{b})_{\mathfrak{b} \in \mathcal{I}}$ of complex numbers with 
$|a_\mathfrak{a}|\leq d_3(\mathfrak{a})$ and $|b_\mathfrak{b}|\leq d_3(\mathfrak{b})$, the inequalities
\begin{align} \label{(2.2)}
    |\mathop{\sum\sum}\limits_{\substack{\mathfrak{a},\mathfrak{b} \in \mathcal{I}\\ \mathcal{N}(\mathfrak{a})\le M}} a_\mathfrak{a} (\omega(\mathfrak{ab})-\Tilde{\omega}(\mathfrak{ab}))| \leq Y
\end{align}
and
\begin{align} \label{(2.3)}
    |\mathop{\sum\sum}\limits_{\substack{\mathfrak{a},\mathfrak{b} \in \mathcal{I}\\ 
    x^{\mu-\iota}\le \mathcal{N}(\mathfrak{a})\le x^{\mu+\kappa}}} a_\mathfrak{a} b_\mathfrak{b} (\omega(\mathfrak{ab})-\Tilde{\omega}(\mathfrak{ab}))| \leq Y
\end{align}
hold. Also suppose that $|c_{\mathfrak{r}}|\le 1$ and $R>1/2$.  
If $R>x^{\mu+\kappa}/2$, then, in addition, suppose that $2R\le x^{1-\mu}\le M$ and 
\begin{equation} \label{smallandlarge}
\sum\limits_{\substack{\mathfrak{a}\in \mathcal{I}\\ \mathcal{N}(\mathfrak{a})\not\in [x^{1-\iota},x]}} d_5(\mathfrak{a})w(\mathfrak{a}) \leq Y.
\end{equation}
Then
\begin{equation} \label{Phi}
    \left|\sum\limits_{\mathcal{N}(\mathfrak{r})\sim R} c_{\mathfrak{r}}\Phi_{\mathfrak{r}}(\omega,\mathfrak{q})-
    \sum\limits_{\mathcal{N}(\mathfrak{r})\sim R} c_{\mathfrak{r}}\Phi_{\mathfrak{r}}(\Tilde{\omega},\mathfrak{q})\right|\ll Y\log^3(xX)
\end{equation}
for any prime ideal $\mathfrak{q}\in \mathbb{P}$ such that $\mathcal{N}(\mathfrak{p})< x^{\kappa}$ whenever $\mathfrak{p} \prec \mathfrak{q}$
and 
\begin{equation} \label{S}
    \left|\sum\limits_{\mathcal{N}(\mathfrak{r})\sim R} c_{\mathfrak{r}}S_{\mathfrak{r}}(\omega,x^{\kappa})-
    \sum\limits_{\mathcal{N}(\mathfrak{r})\sim R} c_{\mathfrak{r}}S_{\mathfrak{r}}(\Tilde{\omega},x^{\kappa})\right|\ll Y\log^3(xX),
\end{equation}
where the implied constants may depend on the field $\mathbb{K}$. 
\end{Theorem}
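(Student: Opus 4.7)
The plan is to run Harman's Buchstab-iteration argument, adapted to ideals, reducing the claimed bound to the Type I and Type II hypotheses~\eqref{(2.2)} and~\eqref{(2.3)}. Setting $\mathfrak{q} = \mathfrak{Q}(x^\kappa)$ gives
\[
    S_{\mathfrak{r}}(\omega, x^\kappa) \;=\; \Phi_{\mathfrak{r}}\bigl(\omega, \mathfrak{Q}(x^\kappa)\bigr),
\]
and the hypothesis on $\mathfrak{q}$ in~\eqref{Phi} holds by the defining property of $\mathfrak{Q}(x^\kappa)$, so~\eqref{S} is a special case of~\eqref{Phi}, on which I would focus. Writing $\Omega \coloneqq \omega - \tilde\omega$, Buchstab's identity
\[
    \Phi_{\mathfrak{r}}(\Omega, \mathfrak{q}') \;=\; \Phi_{\mathfrak{r}}(\Omega, \mathfrak{q}) \;-\; \sum_{\mathfrak{q} \preceq \mathfrak{p} \prec \mathfrak{q}'} \Phi_{\mathfrak{p}\mathfrak{r}}(\Omega, \mathfrak{p}),
\]
iterated, expresses $\Phi_{\mathfrak{r}}(\Omega, \mathfrak{q})$ as an alternating sum of pieces of the shape
\[
    \sum_{\mathfrak{p}_1 \succ \dots \succ \mathfrak{p}_k} \Phi_{\mathfrak{p}_1 \dots \mathfrak{p}_k \mathfrak{r}}(\Omega, \mathfrak{p}_k),
\]
indexed by strictly decreasing prime tuples from $\mathbb{P}(\mathfrak{q}) \subseteq \mathbb{P}(x^\kappa)$.

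At each stage of the iteration I would classify the surviving multi-sum into one of three cases and stop iterating as soon as possible. \textbf{(i)} If $\mathcal{N}(\mathfrak{p}_1 \dots \mathfrak{p}_k \mathfrak{r}) \leq M$, expand $\Phi_{\mathfrak{p}_1 \dots \mathfrak{p}_k \mathfrak{r}}(\Omega, \mathfrak{p}_k)$ by M\"obius inversion against the sifting condition, combine $\mathfrak{p}_1 \dots \mathfrak{p}_k \mathfrak{r}$ with the resulting M\"obius divisor into the Type I variable $\mathfrak{a}$, and invoke~\eqref{(2.2)}. \textbf{(ii)} If some cumulative subproduct of the $\mathfrak{p}_i$'s lies in the Type II window $[x^{\mu-\iota}, x^{\mu+\kappa}]$, take those primes as $\mathfrak{a}$ and bundle the remaining primes together with $\mathfrak{r}$ into $\mathfrak{b}$, then apply~\eqref{(2.3)}. \textbf{(iii)} Otherwise, apply Buchstab once more to $\mathfrak{p}_k$ and recurse. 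The key numerical observation for termination is that each $\mathcal{N}(\mathfrak{p}_i) < x^\kappa$, so the cumulative products $\mathcal{N}(\mathfrak{p}_1 \cdots \mathfrak{p}_j)$ grow by factors of at most $x^\kappa$; once they first exceed $x^{\mu-\iota}$ they lie in $[x^{\mu-\iota}, x^{\mu-\iota+\kappa}] \subseteq [x^{\mu-\iota}, x^{\mu+\kappa}]$, triggering case (ii). Thus case (iii) persists only while the cumulative product stays below $x^{\mu-\iota}$, and the iteration depth is $O(\log x)$.

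Before invoking~\eqref{(2.2)} or~\eqref{(2.3)}, the dyadic constraint $\mathcal{N}(\mathfrak{r}) \sim R$ and the ordering constraints $\mathfrak{p}_1 \succ \dots \succ \mathfrak{p}_k$ intertwine the sizes of the variables; I would disentangle them using the cosmetic-surgery Lemma~\ref{disentangle}, at logarithmic cost, so that the resulting coefficients $a_\mathfrak{a}$, $b_\mathfrak{b}$ depend only on $\mathfrak{a}$ respectively $\mathfrak{b}$ and are bounded by $d_3$. The weight bound~\eqref{(2.1)} in terms of $d_5$ then absorbs the divisor weights generated by the M\"obius inversions and the Buchstab iteration. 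In the edge case $R > x^{\mu+\kappa}/2$, the ideal $\mathfrak{r}$ alone exceeds the Type II window, so it must remain inside the $\mathfrak{b}$-variable: here~\eqref{smallandlarge} allows truncation to $\mathcal{N}(\mathfrak{a}\mathfrak{r}) \in [x^{1-\iota}, x]$ at cost $\ll Y$, after which $2R \leq x^{1-\mu} \leq M$ forces $\mathcal{N}(\mathfrak{a}) \leq x/R \leq M$ throughout, and the same three-case classification applies with $\mathfrak{r}$ permanently assigned to the $\mathfrak{b}$-slot.

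The main technical obstacle I anticipate is the treatment of prime ideals sharing the same norm: a naive norm-based Buchstab argument implicitly uses an ordering of primes by norm, which fails in a general number field. The fix is to stratify each range of primes into strata of single norm and to run the Buchstab iteration within each stratum using the auxiliary total order $\prec$ (this is the decomposition previewed in the paragraph introducing the theorem). Verifying that this stratification is compatible both with cosmetic surgery and with the dyadic summation over $\mathfrak{r}$, without inflating the final bound beyond $Y \log^3(xX)$---one logarithm each from the Buchstab depth, the cosmetic surgery, and the dyadic summation over $R$---will be the most delicate bookkeeping of the proof.
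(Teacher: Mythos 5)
Your overall architecture is the one the paper uses: reduce \eqref{S} to \eqref{Phi} via $S_{\mathfrak{r}}(w,x^{\kappa})=\Phi_{\mathfrak{r}}(w,\mathfrak{Q}(x^{\kappa}))$, decompose $\Phi_{\mathfrak{r}}(\omega-\tilde{\omega},\mathfrak{q})$ by iterating the Legendre/Buchstab identity and grouping by the peeled-off largest primes, send small products to \eqref{(2.2)} and window products to \eqref{(2.3)}, disentangle cross-variable constraints with Lemma~\ref{disentangle}, stratify by norm to cope with distinct primes of equal norm, and invoke \eqref{smallandlarge} when $R>x^{\mu+\kappa}/2$. All of that matches.

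There is, however, a concrete misstep in your case~(i). You trigger Type~I when $\mathcal{N}(\mathfrak{p}_1\cdots\mathfrak{p}_k\mathfrak{r})\le M$ and then propose to expand the sifted sum $\Phi_{\mathfrak{p}_1\cdots\mathfrak{p}_k\mathfrak{r}}(\Omega,\mathfrak{p}_k)$ by M\"obius inversion, absorbing the resulting divisor $\mathfrak{d}\mid\Pi(\mathfrak{p}_k)$ into the Type~I variable $\mathfrak{a}$. But \eqref{(2.2)} requires $\mathcal{N}(\mathfrak{a})\le M$ for the \emph{combined} variable $\mathfrak{a}=\mathfrak{p}_1\cdots\mathfrak{p}_k\mathfrak{d}\mathfrak{r}$, and $\mathcal{N}(\mathfrak{d})$ can be as large as $x$, so the hypothesis $\mathcal{N}(\mathfrak{p}_1\cdots\mathfrak{p}_k\mathfrak{r})\le M$ gives no control over $\mathcal{N}(\mathfrak{a})$. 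The paper avoids this by performing the full M\"obius expansion \eqref{(2.4)} \emph{first} and splitting on the size of the entire divisor: the portion with $\mathcal{N}(\mathfrak{d}\mathfrak{r})<M$ is $\Phi^{\sharp}$ and goes straight into \eqref{(2.2)} (there the inner sum over $\mathfrak{b}$ is genuinely free, so nothing extra is pushed into $\mathfrak{a}$), while only the portion with $\mathcal{N}(\mathfrak{d}\mathfrak{r})\ge M$ enters the prime-by-prime iteration. A related point your sketch then misses: in the resulting Type~II terms the condition $\mathcal{N}(\mathfrak{apdr})\ge M$ survives and links the two bilinear variables, and it is one of the two constraints that cosmetic surgery must remove --- the other being the order relation between the last peeled prime $\mathfrak{p}_s$ and the primes absorbed into $\mathfrak{b}$, which is also the sole reason the equal-norm stratification \eqref{primedec1} is needed. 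By contrast, the dyadic condition $\mathcal{N}(\mathfrak{r})\sim R$ and the internal ordering $\mathfrak{p}_1\succ\cdots\succ\mathfrak{p}_k$, which you single out for surgery, sit entirely inside a single bilinear variable and cause no trouble. Once the $\sharp/\flat$ split is done first, the rest of your classification (iterate until the cumulative product enters the window, in $O(\log x)$ steps since every prime norm is below $x^{\kappa}$) coincides with the paper's argument.
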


In applications, an asymptotic is already known for one of the sums 
$$
\sum\limits_{\mathcal{N}(\mathfrak{r})\sim R} c_{\mathfrak{r}}S_{\mathfrak{r}}(\omega,x^{\kappa}), \quad 
    \sum\limits_{\mathcal{N}(\mathfrak{r})\sim R} c_{\mathfrak{r}}S_{\mathfrak{r}}(\Tilde{\omega},x^{\kappa})
$$
and so an asymptotic formula can be deduced from \eqref{S} for the other sum. Theorem \ref{asymp} reduces the problem to establishing suitable estimates for bilinear sums. The special bilinear sums in \eqref{(2.2)} are commonly known as Type I sums. Here the summation over one of the variables is smooth (in our case, over the variable $\mathfrak{b}$). The general bilinear sums in \eqref{(2.3)} are usually referred to as Type II sums. Here we have general complex coefficients $a_{\mathfrak{a}}$ and $b_{\mathfrak{b}}$ for both variables. The situation is similar in applications of Vaughan's identity, where one is also led to estimating Type I and Type II sums. We proceed to the proof of Theorem \ref{asymp}. 

\begin{proof}
Estimate~\eqref{S} follows immediately from estimate~\eqref{Phi} since
$$
S_{\mathfrak{r}}(w,x^{\kappa})=\Phi_{\mathfrak{r}}\left(w,\mathfrak{Q}\left(x^{\kappa}\right)\right)
$$
for $w=\omega,\tilde{\omega}$. 
(Recall that $\mathfrak{Q}\left(x^{\kappa}\right)$ is the smallest prime ideal 
$\mathfrak{q}$ with respect to the ordering $\prec$
with norm greater or equal $x^{\kappa}$.) It remains to prove~\eqref{Phi}.

We first assume that $R\le x^{\mu}/2$. We define the M\"obius $\mu$ function for non-zero ideals by  
\begin{align*}
 \mu(\mathfrak{a})\coloneqq 
       \begin{cases}
       (-1)^k &\quad \text{if} \quad \mathfrak{a}=\mathfrak{p}_1\cdots \mathfrak{p}_k, \mbox{ where } \mathfrak{p}_1,...,\mathfrak{p}_k \mbox{ are distinct prime ideals,} \\
       0 &\quad\text{otherwise.}
     \end{cases}
\end{align*}
Then 
\begin{align} \label{(2.4)}
    \Phi_{\mathfrak{r}}(w,\mathfrak{q}) &
    =\sum_{\mathfrak{a}\in \mathcal{I}} w(\mathfrak{ar})\sum_{\substack{\mathfrak{d}|\Pi(\mathfrak{q})\\ \mathfrak{d}|\mathfrak{a}}}
    \mu(\mathfrak{d})
    = \sum_{\mathfrak{d}|\Pi(\mathfrak{q})} \mu(\mathfrak{d})\sum_{\mathfrak{b}\in\mathcal{I}}w(\mathfrak{bdr}).
\end{align}
Let 
\begin{align} \label{(2.5)}
\Delta(\mathfrak{c})=\sum_{\mathfrak{b}\in \mathcal{I}}(\omega(\mathfrak{bc})-\Tilde{\omega}(\mathfrak{bc})). 
\end{align}
Applying~\eqref{(2.4)} for $w=\omega$ and $w=\Tilde{\omega}$ yields
\begin{align} \label{(2.6)}
\Phi_{\mathfrak{r}}(\omega,\mathfrak{q})-\Phi_{\mathfrak{r}}(\Tilde{\omega},\mathfrak{q})=\bigg\{\sum_{\substack{\mathfrak{d}|\Pi(\mathfrak{q})\\ \mathcal{N}(\mathfrak{dr})<M}} + \sum_{\substack{\mathfrak{d}|\Pi(\mathfrak{q})\\ \mathcal{N}(\mathfrak{dr})\geq M}}\bigg\}
\mu(\mathfrak{d})\Delta(\mathfrak{dr})
\eqqcolon \Phi_{\mathfrak{r}}^{\sharp} + \Phi_{\mathfrak{r}}^{\flat}. 
\end{align}
Set 
$$
\Phi^{\sharp}\coloneqq\sum\limits_{\mathcal{N}(\mathfrak{r})\sim R} c_{\mathfrak{r}}\Phi_{\mathfrak{r}}^{\sharp} 
$$
and 
$$
\Phi^{\flat}\coloneqq\sum\limits_{\mathcal{N}(\mathfrak{r})\sim R} c_{\mathfrak{r}}\Phi_{\mathfrak{r}}^{\flat}.
$$
Using~\eqref{(2.2)} with 
$$
a_{\mathfrak{a}}=\sum\limits_{\substack{\mathfrak{r}|\mathfrak{a}\\ \mathcal{N}(\mathfrak{r})\sim R}} c_{\mathfrak{r}}
\mu\left(\frac{\mathfrak{a}}{\mathfrak{r}}\right)1_{\mathfrak{(\mathfrak{a}/\mathfrak{r})}|\Pi(\mathfrak{q})},
$$
we deduce that
$$
|\Phi^{\sharp}| \leq Y.
$$
Therefore, to prove the theorem, we need to show that 
\begin{align} \label{(2.7)}
|\Phi^{\flat}|\ll Y\log^3(xX). 
\end{align} 

The next step is to arrange $\Phi^{\flat}$ into subsums according to the sizes of the prime factors in $\mathfrak{d}$ (where $\mathfrak{d}$ is the summation variable from~\eqref{(2.6)}).
Take $g:\mathcal{I}\Rightarrow\mathbb{C}$ to be any function. We may group the terms of the sum
\begin{align*}
\Phi_{\mathfrak{r}}=\sum_{\mathfrak{d}|\Pi(\mathfrak{q})}\mu(\mathfrak{d})g(\mathfrak{dr})
\end{align*}
according to the largest factor $\mathfrak{p}_1$  of $\mathfrak{d}$ (w.r.t. $\prec$), getting the identity
\begin{align} \label{(2.8)}
\Phi_{\mathfrak{r}} = g(\mathfrak{r})-\sum_{\mathfrak{p}_1\in \mathbb{P}(\mathfrak{q})}\sum_{\mathfrak{d}|\Pi(\mathfrak{p}_1)} 
\mu(\mathfrak{d})g(\mathfrak{p}_1\mathfrak{dr}).
\end{align}
Again for the part $\sum_{\mathfrak{d}|\Pi(\mathfrak{p}_1)} \mu(\mathfrak{d})g(\mathfrak{p}_1\mathfrak{dr})$, we have 
\begin{align} \label{(2.9)}
    \sum_{\mathfrak{d}|\Pi(\mathfrak{p}_1)} \mu(\mathfrak{d})g(\mathfrak{p}_1\mathfrak{dr})= g(\mathfrak{p}_1\mathfrak{r})-\sum_{\mathfrak{p}_2\prec\mathfrak{p}_1}
    \sum_{\mathfrak{d}|\Pi(\mathfrak{p}_2)} \mu(\mathfrak{d})g(\mathfrak{p}_1\mathfrak{p}_2\mathfrak{dr}). 
\end{align}
Minding the innermost sum on the right-hand side above, it is obvious that the above identity can be iterated if so desired. 
To describe for which sub-sums the iteration is beneficial, we let
\begin{align*}
    \mathbb{P}(\mathfrak{q})&
    = \{\mathfrak{p}_1\in \mathbb{P}(\mathfrak{q}):\mathcal{N}(\mathfrak{p}_1\mathfrak{r})\ge x^\mu\}\mathbin{\dot{\cup}}
    \{\mathfrak{p}_1\in \mathbb{P}(\mathfrak{q}):\mathcal{N}(\mathfrak{p}_1\mathfrak{r})< x^\mu\}\\&
    =\mathcal{P}_{\mathfrak{r}}(1)\mathbin{\dot{\cup}}\mathcal{Q}_{\mathfrak{r}}(1),\quad \mbox{say},
\end{align*}
and inductively for $s=2,3,...$,
\begin{align*}
  \mathcal{Q'}_{\mathfrak{r}}(s) &= \{(\mathfrak{p}_1,...,\mathfrak{p}_{s-1},\mathfrak{p}_s) \in {\mathbb{P}(\mathfrak{q})}^s:\mathfrak{p}_s \prec \mathfrak{p}_{s-1},(\mathfrak{p}_1,...,\mathfrak{p}_{s-1})\in 
  \mathcal{Q}_{\mathfrak{r}}(s-1)\}\\&
  =\mathcal{P}_{\mathfrak{r}}(s)\mathbin{\dot{\cup}}\mathcal{Q}_{\mathfrak{r}}(s),
\end{align*}
where
\begin{align*}
  \mathcal{P}_{\mathfrak{r}}(s) &= \{(\mathfrak{p}_1,...,\mathfrak{p}_{s-1},\mathfrak{p}_s)\in\mathcal{Q'}_{\mathfrak{r}}(s):\mathcal{N}(\mathfrak{p}_1\mathfrak{p}_2\
  \cdots \mathfrak{p}_s\mathfrak{r})\ge x^\mu\},\\
  \mathcal{Q}_{\mathfrak{r}}(s) &= \{(\mathfrak{p}_1,...,\mathfrak{p}_{s-1},\mathfrak{p}_s)\in\mathcal{Q'}_{\mathfrak{r}}(s):\mathcal{N}(\mathfrak{p}_1\mathfrak{p}_2
  \cdots \mathfrak{p}_s\mathfrak{r})< x^\mu\}.
\end{align*}
Assuming that $g$ vanishes on arguments $\mathfrak{a}$ with $\mathcal{N}(\mathfrak{a})< x^\mu$, and on applying~\eqref{(2.8)} and~\eqref{(2.9)} we have 
\begin{align*}
\Phi_{\mathfrak{r}} &
 =-\bigg(\sum_{\mathfrak{p}_1\in\mathcal{P}_{\mathfrak{r}}(1)}+\sum_{\mathfrak{p}_1\in\mathcal{Q}_{\mathfrak{r}}(1)}\bigg)
\sum_{\mathfrak{d}|\Pi(\mathfrak{p}_1)}\mu(\mathfrak{d})g(\mathfrak{p}_1\mathfrak{dr})\\ &
  = -\sum_{\mathfrak{p}_1\in\mathcal{P}_{\mathfrak{r}}(1)}\sum_{\mathfrak{d}|\Pi(\mathfrak{p}_1)}\mu(\mathfrak{d})g(\mathfrak{p}_1\mathfrak{dr})+
  \sum_{(\mathfrak{p}_1,\mathfrak{p}_2)\in \mathcal{P}_{\mathfrak{r}}(2)}\sum_{\mathfrak{d}|\Pi(\mathfrak{p}_2)}\mu(\mathfrak{d})g(\mathfrak{p}_1\mathfrak{p}_2\mathfrak{dr})\\
  & \quad \quad \quad \quad + \sum_{(\mathfrak{p}_1,\mathfrak{p}_2)\in \mathcal{Q}_{\mathfrak{r}}(2)}\sum_{\mathfrak{d}|\Pi(\mathfrak{p}_2)}\mu(\mathfrak{d})g(\mathfrak{p}_1
  \mathfrak{p}_2\mathfrak{dr}),
\end{align*}
where we note that $g(\mathfrak{r})=0$ in~\eqref{(2.8)} because of our assumption $\mathcal{N}(\mathfrak{r})< 2R\le x^{\mu}$. 

On iterating this process, always applying~\eqref{(2.9)} to the $\mathcal{Q}$-part, it transpires that 
\begin{align*}
   \Phi_{\mathfrak{r}} &
   =\sum_{s\leq t}(-1)^s\sum_{(\mathfrak{p}_1,...,\mathfrak{p}_s)\in\mathcal{P}_{\mathfrak{r}}(s)}\sum_{\mathfrak{d}|\Pi(\mathfrak{p}_s)}
   \mu(\mathfrak{d})g(\mathfrak{p}_1\mathfrak{p}_2\cdots \mathfrak{p}_s\mathfrak{dr})\\
   & \quad +(-1)^t\sum_{(\mathfrak{p}_1,...,\mathfrak{p}_t)\in\mathcal{Q}_{\mathfrak{r}}(t)}\sum_{\mathfrak{d}|\Pi(\mathfrak{p}_t)}\mu(\mathfrak{d})g(\mathfrak{p}_1\mathfrak{p}_2\cdots \mathfrak{p}_t\mathfrak{dr})
\end{align*}
for any $t\in \mathbb{N}$. Since the product of $t$ prime ideals has norm greater than or equal to $2^t$ we have
\begin{align*}
    \mathcal{Q}_{\mathfrak{r}}(t)=\emptyset \quad \mbox{for} \quad t>\frac{\mu}{\log 2}\log x.
\end{align*}
Hence,
\begin{align*}
    \Phi_{\mathfrak{r}}=\sum_{s\leq t}(-1)^s \sum_{(\mathfrak{p}_1,...,\mathfrak{p}_s)\in\mathcal{P}_{\mathfrak{r}}(s)}\sum_{\mathfrak{d}|\Pi(\mathfrak{p}_s)}
    \mu(\mathfrak{d})g(\mathfrak{p}_1\mathfrak{p}_2\cdots \mathfrak{p}_s\mathfrak{dr})
\end{align*}
for 
\begin{align} \label{(2.10)}
    t=\Bigl\lfloor{\frac{\log x}{\log 2}}\Bigr\rfloor+1 \ll \log x.
\end{align} 
We apply this to $\Phi_{\mathfrak{r}}^{\flat}$ with 
\begin{equation} \label{gchoice}
g(\mathfrak{a})=\Delta(\mathfrak{a})1_{\{{\mathcal{N}(\mathfrak{a})\geq M}\}}.
\end{equation}
Note that since 
$M\ge x^\mu$, we have $g(\mathfrak{a})=0$ for all $\mathcal{N}(\mathfrak{a})< x^\mu$, as was assumed in the above arguments. Thus,
\begin{align} \label{(2.11)}
    \Phi_{\mathfrak{r}}^{\flat}=\sum_{s\leq t}  (-1)^s\Phi_{\mathfrak{r}}^{\flat}(s),
\end{align}
where 
\begin{align*}
     \Phi_{\mathfrak{r}}^{\flat}(s)=\sum_{\substack{(\mathfrak{p}_1,...,\mathfrak{p}_s)\in\mathcal{P}_{\mathfrak{r}}(s) \\ \mathfrak{a}=\mathfrak{p}_1\cdots \mathfrak{p}_s}}
     \sum_{\substack{\mathfrak{d}|\Pi(\mathfrak{p}_s)\\ \mathcal{N}(\mathfrak{adr})\geq M}} \mu(\mathfrak{d})\Delta(\mathfrak{adr}). 
\end{align*}
Another application of~\eqref{(2.9)} gives 
\begin{equation} \label{(2.12)}
\begin{aligned}
 \Phi_{\mathfrak{r}}^{\flat}(s)&
 =\sum_{\substack{(\mathfrak{p}_1,...,\mathfrak{p}_s)\in\mathcal{P}_{\mathfrak{r}}(s) \\ 
 \mathfrak{a}=\mathfrak{p}_1\cdots \mathfrak{p}_s\\ \mathcal{N}(\mathfrak{ar})\geq M}} \Delta(\mathfrak{ar})-
 \sum_{\substack{(\mathfrak{p}_1,...,\mathfrak{p}_s)\in\mathcal{P}_{\mathfrak{r}}(s) \\ \mathfrak{a}=\mathfrak{p}_1\cdots \mathfrak{p}_s}} 
 \sum_{\mathfrak{p}\prec \mathfrak{p}_s} \sum_{\substack{\mathfrak{d}|\Pi(\mathfrak{p)} \\ \mathcal{N}(\mathfrak{apdr})\geq M}} 
 \mu(\mathfrak{d})\Delta(\mathfrak{apdr}) \\&
 \eqqcolon \Phi_{\mathfrak{r},1}^{\flat}(s)-\Phi_{\mathfrak{r},2}^{\flat}(s).
\end{aligned}
\end{equation}
Given $\mathfrak{a}=\mathfrak{p}_1\cdots \mathfrak{p}_{s-1}\mathfrak{p}_s$ with 
\begin{align*}
(\mathfrak{p}_1,...,\mathfrak{p}_{s-1},\mathfrak{p}_s)\in \mathcal{P}_{\mathfrak{r}}(s) \quad \mbox{and} \quad (\mathfrak{p}_1,...,\mathfrak{p}_{s-1})\in \mathcal{Q}_{\mathfrak{r}}(s-1) \mbox{ if } s\ge 2,
\end{align*}
and noting that $\mathcal{N}(\mathfrak{p}_s)\leq \mathcal{N}(\mathfrak{p}_1)<x^\kappa $ (recall that $\mathcal{N}(\mathfrak{p})<x^{\kappa}$ for all prime ideals $\mathfrak{p}\prec \mathfrak{q}$), we have
\begin{align} \label{correctsize}
  x^\mu \le M\le \mathcal{N}(\mathfrak{ar})=\mathcal{N}(\mathfrak{p}_1\cdots \mathfrak{p}_{s-1}\mathfrak{r})\mathcal{N}(\mathfrak{p_s})<x^{\mu+\kappa}.
\end{align}
This works also in the case when $s=1$ since in this case the product $\mathfrak{p}_1\cdots \mathfrak{p}_{s-1}$ is empty and we have assumed that $\mathcal{N}(\mathfrak{r})<2R\le x^{\mu}$.    
Using this, we find that $\Phi_{\mathfrak{r},1}^{\flat}(s)$ can be expressed as 
\begin{align} \label{S1sum}
 \Phi_{\mathfrak{r},1}^{\flat}(s)=
 \mathop{\sum\sum}\limits_{\mathfrak{a},\mathfrak{b} \in \mathcal{I}} a_{\mathfrak{r},\mathfrak{a}} (\omega(\mathfrak{abr})-\Tilde{\omega}(\mathfrak{abr})) 
\end{align}
with coefficients
\begin{align*}
a_{\mathfrak{r},\mathfrak{a}}=1_{\{\mathcal{N}(\mathfrak{ar})\geq M\}} 1_{\{\mathfrak{p}_1\cdots \mathfrak{p}_s: (\mathfrak{p}_1,...,\mathfrak{p}_s)\in \mathcal{P}_{\mathfrak{r}}(s)\}}(\mathfrak{a})
\end{align*}
only supported on $(\mathfrak{r},\mathfrak{a})$ with $x^\mu\le \mathcal{N}(\mathfrak{ar})< x^{\mu+\kappa}$. Hence setting 
$\mathfrak{A}=\mathfrak{ar}$ and using~\eqref{(2.3)}, we have 
\begin{align} \label{(2.13)}
\left|\sum\limits_{\mathcal{N}(\mathfrak{r})\sim R} c_{\mathfrak{r}}\Phi_{\mathfrak{r},1}^{\flat}(s)\right|=\left|\sum\limits_{\substack{\mathfrak{A},\mathfrak{b}\\
x^{\mu}\le\mathcal{N}(\mathfrak{A})< x^{\mu+\kappa}}} a_{\mathfrak{a}} (\omega(\mathfrak{A}\mathfrak{b})-\Tilde{\omega}(\mathfrak{A}\mathfrak{b}))\right|\leq Y, 
\end{align}
where 
$$
a_{\mathfrak{A}}=\sum\limits_{\substack{\mathcal{N}(\mathfrak{r})\sim R\\ \mathfrak{r}|\mathfrak{A}}} 
c_{\mathfrak{r}}a_{\mathfrak{r},\mathfrak{A}/\mathfrak{r}}.
$$

Moving on to $\Phi_{\mathfrak{r},2}^{\flat}(s)$, we expand the definition~\eqref{(2.5)} of $\Delta$, getting
\begin{align*}
    \Phi_{\mathfrak{r},2}^{\flat}(s)=\Phi_{\mathfrak{r},2}^{\flat}(s,\omega)-\Phi_{\mathfrak{r},2}^{\flat}(s,\Tilde{\omega}),
\end{align*}
where
\begin{equation}\label{S2sum}
\begin{aligned}
 \Phi_{\mathfrak{r},2}^{\flat}(s,w)&\coloneqq\sum_{\substack{(\mathfrak{p}_1,...,\mathfrak{p}_s)\in\mathcal{P}_{\mathfrak{r}}(s) \\ \mathfrak{a}=\mathfrak{p}_1\cdots \mathfrak{p}_s}} 
 \sum_{\mathfrak{p}\prec \mathfrak{p}_s} \sum_{\substack{\mathfrak{d}|\Pi(\mathfrak{p)} \\ \mathcal{N}(\mathfrak{apdr})\geq M}} \mu(\mathfrak{d})
 \sum_{\mathfrak{n}\in\mathcal{I}}w(\mathfrak{anpdr})\\&
 =\sum_{\substack{(\mathfrak{p}_1,...,\mathfrak{p}_s)\in\mathcal{P}_{\mathfrak{r}}(s) \\ \mathfrak{a}=\mathfrak{p}_1\cdots \mathfrak{p}_s}}\sum_{\mathfrak{b}\in\mathcal{I}} 
 \sum_{\mathfrak{p}\prec \mathfrak{p}_s}\mathop{\sum\sum}\limits_{\substack{\mathfrak{d}|\Pi(\mathfrak{p})\\ \mathfrak{npd=b} \\ \mathcal{N}(\mathfrak{apdr})\geq M}}
 \mu(\mathfrak{d})w(\mathfrak{abr}).
 \end{aligned}
\end{equation}
 In order to apply~\eqref{(2.3)}, we must disentangle the variables $\mathfrak{a}$ and $\mathfrak{n}$ in the above summation. To this end, we split
 \begin{align*}
   \sum_{\mathfrak{p}\prec \mathfrak{p}_s} = \sum_{\substack{\mathfrak{p}\prec \mathfrak{p}_s\\\mathcal{N}(\mathfrak{p})=
   \mathcal{N}(\mathfrak{p}_s)}}+\sum_{\substack{\mathfrak{p}\prec \mathfrak{p}_s\\\mathcal{N}(\mathfrak{p})<\mathcal{N}(\mathfrak{p}_s)}} 
 \end{align*}
to obtain a decomposition 
 \begin{align} \label{(2.15)}
 \Phi_{\mathfrak{r},2}^{\flat}(s,w)=\Phi_{\mathfrak{r},2}^{=}(s,w)+\Phi_{\mathfrak{r},2}^{<}(s,w), \quad \mbox{say}. 
 \end{align}
For $\Phi_{\mathfrak{r},2}^{<}(s,w)$ we have 
 \begin{align*}
 \Phi_{\mathfrak{r},2}^{<}(s,w)=\sum_{\substack{(\mathfrak{p}_1,...,\mathfrak{p}_s)\in\mathcal{P}_{\mathfrak{r}}(s) \\ 
 \mathfrak{a}=\mathfrak{p}_1\cdots \mathfrak{p}_s}}
 \sum_{\mathfrak{b}\in\mathcal{I}} \sum_{\mathfrak{p}\prec \mathfrak{p}_s}\mathop{\sum\sum}\limits_{\substack{\mathfrak{d}|\Pi(\mathfrak{p})\\ \mathfrak{npd=b}}} 
 \mu(\mathfrak{d})\chi_{\mathfrak{r}}(\mathfrak{a,d,p},\mathfrak{p}_s) w(\mathfrak{abr}),
 \end{align*}
where 
 \begin{align*}
    \chi_{\mathfrak{r}}(\mathfrak{a,d,p},\mathfrak{p}_s)=1_{\{\mathcal{N}(\mathfrak{apdr})\ge M\}}1_{\{\mathcal{N}(\mathfrak{p})<\mathcal{N}(\mathfrak{p}_s) \}},
 \end{align*}
and the sum $\Phi_{\mathfrak{r},2}^{=}(s,w)$ can be expressed similarly, but needs a little more care.

First note that, by ramification theory, for any fixed number, there are at most $[\mathbb{K}:\mathbb{Q}]$ distinct prime ideals $\mathfrak{p} \subseteq \mathcal{O}$ whose norm coincides with that number.
We use this fact to split the set $\mathcal{P}_{\mathfrak{r}}(s)$ into disjoint sets in the form
\begin{equation} \label{primedec1}
	\mathcal{P}_{\mathfrak{r}}(s)
	= \mathcal{P}^{0}_{\mathfrak{r}}(s) \cupdot \mathcal{P}^{1}_{\mathfrak{r}}(s) \cupdot \ldots \cupdot \mathcal{P}^{[\mathbb{K}:\mathbb{Q}]-1}_{\mathfrak{r}}(s),
\end{equation}
where $(\mathfrak{p}_1,\ldots,\mathfrak{p}_s)\in\mathcal{P}_{\mathfrak{r}}(s)$ belongs to $\mathcal{P}_{\mathfrak{r}}^{u}(s)$ if and only if there are exactly $u$ distinct prime ideals, all smaller than $\mathfrak{p}_s$ with respect to $\prec$, and having norm equal to $\Norm(\mathfrak{p}_s)$.
Furthermore, let
\begin{equation} \label{primedec2}
    \mathbb{P}_{\mathfrak{r},s}^u(\mathfrak{q})
    = \mathcal{P}^{0}_{\mathfrak{r}}(s) \cupdot \mathcal{P}^{1}_{\mathfrak{r}}(s) \cupdot \ldots \cupdot \mathcal{P}^{u-1}_{\mathfrak{r}}(s).
\end{equation}
Then
\begin{equation} \label{decomp}
	\Phi_{\mathfrak{r},2}^{=}(s,w)
	= \sum_{0\leq u < [\mathbb{K}:\QQ]}
	\sum_{\substack{ (\mathfrak{p}_1,\ldots,\mathfrak{p}_s) \in \mathcal{P}_{\mathfrak{r}}^u(s) \\ \mathfrak{a} = \mathfrak{p}_1\cdots\mathfrak{p}_s }}
	\sum_{\mathfrak{b} \in \mathcal{I}}
	\sum_{ \mathfrak{p} \in \mathbb{P}_{\mathfrak{r},s}^u(\mathfrak{q}) }
	\mathop{\sum\sum}_{\substack{ \mathfrak{d}|\Pi(\mathfrak{p}) \\ \mathfrak{npd} = \mathfrak{b} }}
	\mu(\mathfrak{d})
	\tilde{\chi}_{\mathfrak{r}}(\mathfrak{a}, \mathfrak{d}, \mathfrak{p}, \mathfrak{p}_s)
	w(\mathfrak{abr}),
\end{equation}
where we have put
\begin{gather}\label{(2.16)}
	\begin{aligned}
		\tilde{\chi}_{\mathfrak{r}}(\mathfrak{a}, \mathfrak{d}, \mathfrak{p}, \mathfrak{p}_s) &
		= 1_{\braces{ \Norm(\mathfrak{apdr}) \geq M }} 1_{\braces{ \Norm(\mathfrak{p}) = \Norm(\mathfrak{p}_s) }} \\ &
		= 1_{\braces{ \Norm(\mathfrak{apdr}) \geq M }} 1_{\braces{ \Norm(\mathfrak{p}) \leq \Norm(\mathfrak{p}_s) }} - \chi_{\mathfrak{r}}(\mathfrak{a}, \mathfrak{d}, \mathfrak{p}, \mathfrak{p}_s).
	\end{aligned}
\end{gather}

To separate summations, we now use Lemma~\ref{disentangle}.
We choose some real number $\rho$ with $|\rho|\leq 1/2$ such that $\{M+\rho\}=1/2$ and hence the condition 
$\mathcal{N}(\mathfrak{apdr})\geq M$ implies
\begin{align*}
    |\log \mathcal{N}(\mathfrak{apdr})- \log(M+\rho)|\geq \log\frac{x+1}{x+1/2}\geq \frac{1}{3x}.
\end{align*}
Therefore, Lemma~\ref{disentangle} shows that 
\begin{align*}
    1_{\{\mathcal{N}(\mathfrak{apdr})\geq M\}}=1-\frac{1}{\pi}\int_{-T}^{T}\mathcal{N}(\mathfrak{apdr})^{i\tau}\sin{(\tau\log(M+\rho))}\frac{d\tau}{\tau}+ 
    O\left(\frac{x}{T}\right)
\end{align*}
for every $T\geq 1$. Similarly,
\begin{align*}
    1_{\{\mathcal{N}(\mathfrak{p})<\mathcal{N}(\mathfrak{p}_s)\}} & =\frac{1}{\pi}\int_{-T}^{T}e^{it/2}e^{it\mathcal{N}(\mathfrak{p})}\sin{(t\mathcal{N}(\mathfrak{p}_s))}\frac{dt}{t}+
    O\left(\frac{1}{T}\right),\\
    1_{\{\mathcal{N}(\mathfrak{p})\le \mathcal{N}(\mathfrak{p}_s)\}} & =
    \frac{1}{\pi}\int_{-T}^{T}e^{-it/2}e^{it\mathcal{N}(\mathfrak{p})}\sin{(t\mathcal{N}(\mathfrak{p}_s))}\frac{dt}{t}+O\left(\frac{1}{T}\right).
\end{align*}
Thus, 
\begin{equation} \label{(2.17)}
\begin{aligned}
  \Phi_{\mathfrak{r},2}^{<}(s,w)&
  =\frac{1}{\pi}\int_{-T}^{T}\mathop{\sum\sum}\limits_{\substack{\mathfrak{a},\mathfrak{b}\in \mathcal{I}}}
  a_\mathfrak{a}(t)b_\mathfrak{b}(t)w(\mathfrak{abr})\frac{dt}{t}\\&- \frac{1}{\pi^2}\int_{-T}^{T}\int_{-T}^{T}\mathop{\sum\sum}
  \limits_{\substack{\mathfrak{a},\mathfrak{b}\in \mathcal{I}}}\mathcal{N}(\mathfrak{r})^{it}a_\mathfrak{a}(t,\tau)b_\mathfrak{b}(t,\tau)w(\mathfrak{abr})
  \frac{d\tau}{\tau}\frac{dt}{t}\\ 
   & + O\Bigg(\Bigg(\frac{x}{T}+\frac{1}{T}\int_{-T}^{T}|\sin{(\tau\log(M+\rho))}|\frac{d\tau}{\tau}\Bigg)\times\\ &  \quad\qquad
  \Bigg(\sum_{\substack{(\mathfrak{p}_1,...,\mathfrak{p}_s)\in\mathcal{P}_{\mathfrak{r}}(s) \\ \mathfrak{a}=\mathfrak{p}_1\cdots \mathfrak{p}_s}}
  \sum_{\mathfrak{b}\in\mathcal{I}} \sum_{\mathfrak{p}\prec \mathfrak{p}_s}\mathop{\sum\sum}\limits_{\substack{\mathfrak{d}|\Pi(\mathfrak{p})\\ 
  \mathfrak{npd=b}}}  w(\mathfrak{abr})\Bigg)\Bigg),
\end{aligned}
\end{equation}
with coefficients 
\begin{equation} \label{(2.18)}
\begin{split}
    a_{\mathfrak{a}}(t)&=
       \begin{cases}
       \sin{(t\mathcal{N}(\mathfrak{p}_s))} \quad \mbox{if } \exists (\mathfrak{p}_1,...,\mathfrak{p}_s)\in\mathcal{P}_{\mathfrak{r}}(s) \mbox{ such that } 
       \mathfrak{a}=\mathfrak{p}_1 \cdots \mathfrak{p}_s,   \\
       0 \quad \text{otherwise,}
     \end{cases}\\
    b_\mathfrak{b}(t)&=\sum_{\mathfrak{p}\in\mathbb{P}(\mathfrak{q})}\mathop{\sum\sum}\limits_{\substack{\mathfrak{d}|\Pi(\mathfrak{p})\\ \mathfrak{npd=b}}}e^{it/2}e^{it\mathcal{N}(\mathfrak{p})}\mu(\mathfrak{d}),\\
    a_{\mathfrak{a}}(t,\tau)&= a_{\mathfrak{a}}(t)\mathcal{N}(\mathfrak{a})^{i\tau}\sin{(\tau\log(M+\rho))},\\
    b_{\mathfrak{n}}(t,\tau)&=\sum_{\mathfrak{p}\in\mathbb{P}(\mathfrak{q}) }\mathop{\sum\sum}\limits_{\substack{\mathfrak{d}|\Pi(\mathfrak{p})\\ 
    \mathfrak{npd=b}}}e^{\frac{it}{2}}e^{it\mathcal{N}(\mathfrak{p})}\mu(\mathfrak{d})\mathcal{N}(\mathfrak{pd})^{i\tau}. 
\end{split}
\end{equation}

We proceed by gathering some intermediate information before applying~\eqref{(2.3)}: Clearly,
\begin{align*}
     |b_\mathfrak{b}(t)|,|b_{\mathfrak{b}}(t,\tau)|\leq d_2(\mathfrak{b}).
\end{align*}
For the other coefficients we always have
\begin{align*}
     |a_\mathfrak{a}(t)|,|a_{\mathfrak{a}}(t,\tau)|\leq 1,
\end{align*}
yet if $t$ and $\tau$ are small, we can do better: If $|t|\leq x^{-1/2}$ and $|\tau|\leq 1/\log(x+1/2)$, then
\begin{align} \label{(2.19)}
    |a_\mathfrak{a}(t)|\leq\sqrt{x}|t|, \quad |a_{\mathfrak{a}}(t,\tau)|\leq\sqrt{x}|t\tau|\log\left(x+1/2\right). 
\end{align}
In view of this, we must deal with functions $f:\mathbb{R} \times (1,\infty)\rightarrow \mathbb{R}$ of the shape
\begin{align*}
    f(t,\delta)=
       \begin{cases}
       \delta |t| \quad if\quad |t|\leq \delta^{-1},\\ 
       1 \quad \text{otherwise}\\
     \end{cases}
\end{align*}
and their integrals
\begin{align} \label{(2.20)}
    \int_{-T}^{T} f(t,\delta)\frac{dt}{|t|} \ll \delta \int_{0}^{\delta^{-1}} dt + \Bigg|\int_{\delta^{-1}}^T \frac{dt}{t}\Bigg|\ll 1+|\log{(T\delta)}|. 
\end{align}
Lastly, we note that by~\eqref{(2.1)}, we have 
\begin{align} \label{(2.21)}
    \left|\sum\limits_{\mathcal{N}(\mathfrak{r})\sim R}
    \sum_{\substack{(\mathfrak{p}_1,...,\mathfrak{p}_s)\in\mathcal{P}_{\mathfrak{r}}(s) \\ \mathfrak{a}=\mathfrak{p}_1\cdots 
    \mathfrak{p}_s}}\sum_{\mathfrak{b}\in\mathcal{I}} \sum_{\mathfrak{p}\prec 
    \mathfrak{p}_s}\mathop{\sum\sum}\limits_{\substack{\mathfrak{d}|\Pi(\mathfrak{p})\\ \mathfrak{npd=b}}}  w(\mathfrak{abr})\right| \le
   \sum_{\mathfrak{a}\in \mathcal{I}} d_5(\mathfrak{a})w(\mathfrak{a}) \le X. 
\end{align}
Gathering all information we got so far, we may derive a bound for 
\begin{align*}
    \mathcal{E}^{<} \coloneqq \left|\sum_{\mathcal{N}(\mathfrak{r})\sim R} c_{\mathfrak{r}}\Phi_{\mathfrak{r},2}^{<}(s,\omega)-
    \sum_{\mathcal{N}(\mathfrak{r})\sim R} c_{\mathfrak{r}}\Phi_{\mathfrak{r},2}^{<}(s,\Tilde{\omega})\right|
\end{align*}
as follows: after applying~\eqref{(2.17)} with 
$w=\omega$ and $w=\Tilde{\omega}$, the $O$-terms are treated directly with~\eqref{(2.21)} and~\eqref{(2.20)},
whereas for the rest we apply~\eqref{(2.3)} after summing over $\mathfrak{r}$ and merging $\mathfrak{a}$ and $\mathfrak{r}$ into $\mathfrak{A}=\mathfrak{ar}$. 
Here it is important to use~\eqref{(2.19)} for small $|t|$ and $|\tau|$ 
first - prior to applying~\eqref{(2.3)} - and~\eqref{(2.20)} then bounds the integrals. Therefore, after some computations, we infer 
\begin{align} \label{(2.22)}
     \mathcal{E}^{<}\ll &Y\log(Tx)(1+\log(T\log(x+1/2)))+
      XT^{-1}(x+\log(T\log(x+1/2))). 
\end{align}
Of course, the same arguments also apply to  
$$
\mathcal{E}^{=} \coloneqq \left|\sum\limits_{\mathcal{N}(\mathfrak{r})\sim R} c_r\Phi_{\mathfrak{r},2}^{=}(s,\omega)-\sum\limits_{\mathcal{N}(\mathfrak{r})\sim R} 
c_r\Phi_{\mathfrak{r},2}^{=}(s,\tilde{\omega})\right|.
$$
In view of~\eqref{(2.16)} we have to apply them twice, but in both cases the coefficients corresponding to~\eqref{(2.18)} 
obey the same bounds we used to derive~\eqref{(2.22)}, with the only difference that the implied constant may now depend on $[\mathbb{K}:\mathbb{Q}]$. (Note the decomposition into $[\mathbb{K}:\mathbb{Q}]$ subsums in~\eqref{decomp}.) Consequently, \eqref{(2.22)} also holds with $\mathcal{E}^{=}$ in place of 
$\mathcal{E}^{<}$. In total, recalling~\eqref{(2.12)}, \eqref{(2.13)} and~\eqref{(2.15)}, we have 
\begin{align*}
  \left| \sum\limits_{\mathcal{N}(\mathfrak{r})\sim R} c_{\mathfrak{r}} \Phi_{\mathfrak{r}}^{\flat}(s)\right| \ll  Y + \mbox{the bound from~\eqref{(2.22)}}
\end{align*}
and it transpires that choosing $T=xX$ suffices to yield a bound of $\ll Y\log^2(xX)$. On plugging this into~\eqref{(2.11)} and recalling~\eqref{(2.10)}, we infer 
\eqref{(2.7)}. This completes the proof for the case when $R\le x^{\mu}/2$. 

If $R>x^{\mu}/2$, then the iteration process described above terminates at $s=1$ since the set $\mathcal{Q}_{\mathfrak{r}}(1)$ is necessarily 
empty. In this case, the argument in~\eqref{correctsize} doesn't work anymore (for $s=1$). However, if $x^{\mu}/2<R\le x^{\mu+\kappa}/2$, then 
$x^{\mu}/2\le \mathcal{N}(\mathfrak{r})
< x^{\mu+\kappa}$, and so~\eqref{(2.3)} can be applied directly with $\mathfrak{a}=\mathfrak{r}$. 

It remains to consider the case when $x^{\mu+\kappa}/2<R\le x^{1-\mu}/2\le M/2$.
Here we start with the same procedure leading to~\eqref{S1sum} and~\eqref{S2sum}. 
We note that our choice of $g$ in~\eqref{gchoice} ensures that again $g(\mathfrak{r})=0$
in~\eqref{(2.8)} because $\mathcal{N}(r)< 2R\le M$. Now we look at the norm of $\mathfrak{ab}$ occurring in the said sums. If $x^{1-\iota}\le \mathcal{N}(\mathfrak{abr})\le x$, then $\mathcal{N}(\mathfrak{ab})$ lies in the range $x^{\mu-\iota}\le \mathcal{N}(\mathfrak{ab})\le x^{\mu+\kappa}$ and hence we may apply~\eqref{(2.3)} with $\mathfrak{a}$
replaced by $\mathfrak{ab}$ and $\mathfrak{b}$ by $\mathfrak{r}$. Indeed, in this case have $\mathcal{N}(\mathfrak{ab})< x^{\mu+\kappa}$ because 
$\mathcal{N}(\mathfrak{a})=\mathcal{N}(\mathfrak{p_1})< x^{\kappa}$ (recall that the above iteration process
terminates at $s=1$) and $\mathcal{N}(\mathfrak{b})\le x/\mathcal{N}(\mathfrak{ar})\le x/M\le x^{\mu}$, 
and we have
$\mathcal{N}(\mathfrak{ab})\ge x^{1-\iota}/\mathcal{N}(\mathfrak{r})> x^{\mu-\iota}$. 
Hence, similar arguments as above apply, where $\mathfrak{a}$ is now grouped together with $\mathfrak{b}$ in place of $\mathfrak{r}$. We handle the remaining contributions of $\mathcal{N}(\mathfrak{abr})>x$ and $\mathcal{N}(\mathfrak{abr})<x^{1-\iota}$ by means of~\eqref{smallandlarge}. 
This completes the proof.
\end{proof}

\section{Asymptotic estimates for \texorpdfstring{$\Phi_{\mathfrak{r}}(W,\mathfrak{p})$}{Φ𝔯(W,𝔭)}}
For the derivation of a number field version of Harman's lower bound sieve, we need asymptotic estimates for $\Phi_{\mathfrak{r}}(W,\mathfrak{p})$, where 
$W :\mathcal{I}\rightarrow \mathbb{R}_{\ge 0}$ is a function satisfying suitable conditions. 
This is the content of this section. The basic lemma here is Buchstab's identity which in this context takes the following form.

\begin{Lemma} \label{Buchstab} For any $\mathfrak{p},\mathfrak{q}\in \mathbb{P}$ with $\mathfrak{p} \prec \mathfrak{q}$ and $\mathfrak{r}\in \mathcal{I}\setminus \{0\}$, we have
$$
\Phi_{\mathfrak{r}}(W,\mathfrak{p})=\Phi_{\mathfrak{r}}(W,\mathfrak{q})+\sum\limits_{\mathfrak{p}\preceq \mathfrak{s}\prec \mathfrak{q}} 
\Phi_{\mathfrak{rs}}(W,\mathfrak{s}).
$$
\end{Lemma}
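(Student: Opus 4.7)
My plan is to derive the identity by partitioning the defining sum of $\Phi_{\mathfrak{r}}(W,\mathfrak{p})$ according to the $\prec$-smallest prime ideal divisor of the summation index. Since $\mathfrak{p} \prec \mathfrak{q}$, one has $\mathbb{P}(\mathfrak{p}) \subseteq \mathbb{P}(\mathfrak{q})$, hence any $\mathfrak{a}$ coprime to $\Pi(\mathfrak{q})$ is already coprime to $\Pi(\mathfrak{p})$. I would therefore split
\[
\Phi_{\mathfrak{r}}(W,\mathfrak{p}) = \Phi_{\mathfrak{r}}(W,\mathfrak{q}) + \sum_{\substack{\mathfrak{a}\in\mathcal{I} \\ (\mathfrak{a},\Pi(\mathfrak{p}))=1 \\ (\mathfrak{a},\Pi(\mathfrak{q}))\neq 1}} W(\mathfrak{ar}),
\]
reducing the task to identifying the residual sum with $\sum_{\mathfrak{p}\preceq\mathfrak{s}\prec\mathfrak{q}}\Phi_{\mathfrak{rs}}(W,\mathfrak{s})$.

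For the residual sum, every $\mathfrak{a}$ has at least one prime divisor in $\mathbb{P}(\mathfrak{q})\setminus\mathbb{P}(\mathfrak{p}) = \{\mathfrak{s} : \mathfrak{p}\preceq\mathfrak{s}\prec\mathfrak{q}\}$. I would pick out the $\prec$-minimum prime divisor $\mathfrak{s}$ of $\mathfrak{a}$: coprimality to $\Pi(\mathfrak{p})$ forces $\mathfrak{s}\succeq\mathfrak{p}$, and the existence of a divisor $\prec\mathfrak{q}$ forces $\mathfrak{s}\prec\mathfrak{q}$. Writing $\mathfrak{a}=\mathfrak{s}\mathfrak{a}'$, the minimality of $\mathfrak{s}$ translates exactly to $(\mathfrak{a}',\Pi(\mathfrak{s}))=1$ (note that $\mathfrak{s}$ itself is still allowed to divide $\mathfrak{a}'$). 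Grouping by $\mathfrak{s}$ then gives
\[
\sum_{\mathfrak{p}\preceq\mathfrak{s}\prec\mathfrak{q}}\sum_{\substack{\mathfrak{a}'\in\mathcal{I}\\(\mathfrak{a}',\Pi(\mathfrak{s}))=1}} W(\mathfrak{s}\mathfrak{a}'\mathfrak{r}) = \sum_{\mathfrak{p}\preceq\mathfrak{s}\prec\mathfrak{q}} \Phi_{\mathfrak{rs}}(W,\mathfrak{s}),
\]
which is the claim.

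There is no genuine obstacle here; the only point I would verify carefully is that the assignment $\mathfrak{a}\mapsto(\mathfrak{s},\mathfrak{a}')$ is a bijection onto the pairs with $\mathfrak{p}\preceq\mathfrak{s}\prec\mathfrak{q}$ and $(\mathfrak{a}',\Pi(\mathfrak{s}))=1$. The inverse direction is the substantive check: given such a pair, any prime $\prec\mathfrak{p}$ is a fortiori $\prec\mathfrak{s}$, hence divides neither $\mathfrak{a}'$ (by hypothesis) nor $\mathfrak{s}$ (since $\mathfrak{s}\succeq\mathfrak{p}$), so $\mathfrak{a}=\mathfrak{s}\mathfrak{a}'$ is coprime to $\Pi(\mathfrak{p})$; and $\mathfrak{s}\in\mathbb{P}(\mathfrak{q})$ divides $\mathfrak{a}$, so $(\mathfrak{a},\Pi(\mathfrak{q}))\neq 1$. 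This delivers the identity termwise under the assumption that $W$ takes non-negative (or merely absolutely summable) values, so the manipulations are justified.
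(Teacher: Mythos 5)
Your argument is correct and is essentially the paper's own proof: both partition the sum defining $\Phi_{\mathfrak{r}}(W,\mathfrak{p})$ according to the $\prec$-smallest prime ideal divisor of $\mathfrak{a}$, with the case "no divisor $\prec\mathfrak{q}$" giving $\Phi_{\mathfrak{r}}(W,\mathfrak{q})$ and the case "$\mathfrak{P}^-(\mathfrak{a})=\mathfrak{s}$ with $\mathfrak{p}\preceq\mathfrak{s}\prec\mathfrak{q}$" giving $\Phi_{\mathfrak{rs}}(W,\mathfrak{s})$ after writing $\mathfrak{a}=\mathfrak{s}\mathfrak{a}'$. Your explicit verification of the bijection $\mathfrak{a}\mapsto(\mathfrak{s},\mathfrak{a}')$ is a point the paper leaves implicit, but the substance is identical.
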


\begin{proof}
By definition of $\Phi_{\mathfrak{r}}(W,\mathfrak{p})$, we may write
$$
\Phi_{\mathfrak{r}}(W,\mathfrak{p})=\sum\limits_{\substack{\mathfrak{a}\in \mathcal{I}\\ \mathfrak{p}\preceq \mathfrak{P}^{-}(\mathfrak{a})}} W(\mathfrak{ar}), 
$$
where $\mathfrak{P}^{-}(\mathfrak{a})$ is the smallest prime ideal divisor of $\mathfrak{a}$ with respect to the above order. It follows that
\begin{equation}
 \begin{split}
\Phi_{\mathfrak{r}}(W,\mathfrak{p}) &= \Phi_{\mathfrak{r}}(W,\mathfrak{q})+\sum\limits_{\mathfrak{p}\preceq \mathfrak{s}\prec \mathfrak{q}} 
\sum\limits_{\substack{\mathfrak{a}\in \mathcal{I}\\
\mathfrak{P}^{-}(\mathfrak{a})=\mathfrak{s}}} W(\mathfrak{ar})\\ &= \Phi_{\mathfrak{r}}(W,\mathfrak{q})+\sum\limits_{\mathfrak{p}\preceq \mathfrak{s}\prec \mathfrak{q}} 
\sum\limits_{\substack{\mathfrak{b}\in \mathcal{I}\\
\mathfrak{s} \preceq \mathfrak{P}^{-}(\mathfrak{b})}} W(\mathfrak{bsr})\\ &=
\Phi_{\mathfrak{r}}(W,\mathfrak{q})+\sum\limits_{\mathfrak{p}\preceq \mathfrak{s}\prec \mathfrak{q}} 
\Phi_{\mathfrak{rs}}(W,\mathfrak{s}),
\end{split}
\end{equation}
as claimed. 
\end{proof}

Now we proceed in two stages. To have a clear picture of how the arguments work, we first handle weights which are characteristic functions of the form
$$
W(\mathfrak{a})=1_{1\le \mathcal{N}(\mathfrak{a})\le N}
$$
along classical lines using Landau's prime ideal theorem. In the second stage, we extend our results to more ge\-neral, possibly smooth weight functions whose 
averages over prime ideals satisfy suitable asymptotics which replace the prime number theorem.       

\subsection{Characteristic functions}
The following is Landau's prime ideal theorem. 

\begin{Lemma} \label{LPIT}
We have 
$$
\sharp \mathbb{P}(z) = \int\limits_{2}^z \frac{dt}{\log t} + O\left(z\exp(-C\sqrt{\log z})\right) =\frac{z}{\log z}+O\left(\frac{z}{\log^2 z}\right)\quad \mbox{as } z\rightarrow \infty,
$$
where $C$ is a positive constant which may depend on the field $\mathbb{K}$. 
\end{Lemma}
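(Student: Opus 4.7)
The plan is to follow the classical proof of the prime number theorem with error term, transplanted to the Dedekind zeta function $\zeta_{\mathbb{K}}(s) = \sum_{\mathfrak{a}\in\mathcal{I}} \mathcal{N}(\mathfrak{a})^{-s} = \prod_{\mathfrak{p}\in\mathbb{P}}(1-\mathcal{N}(\mathfrak{p})^{-s})^{-1}$, which converges absolutely for $\Re(s)>1$. I would invoke the three classical analytic inputs: meromorphic continuation of $\zeta_{\mathbb{K}}$ to $\mathbb{C}$ with a simple pole at $s=1$ (Hecke), a functional equation, and a Hadamard--de la Vall\'ee Poussin type zero-free region of the shape $\Re(s) \geq 1 - c/\log(|\Im(s)|+2)$ for some constant $c=c(\mathbb{K})>0$. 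These are standard and can be found e.g.\ in Narkiewicz's book on algebraic number theory or Lang's \emph{Algebraic Number Theory}.

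Next I would introduce the ideal von Mangoldt function $\Lambda_{\mathbb{K}}(\mathfrak{a}) = \log\mathcal{N}(\mathfrak{p})$ if $\mathfrak{a} = \mathfrak{p}^k$ for some $k\geq 1$ and $\mathfrak{p}\in\mathbb{P}$, and $0$ otherwise, and the associated summatory function $\psi_{\mathbb{K}}(z) \coloneqq \sum_{\mathcal{N}(\mathfrak{a})\leq z}\Lambda_{\mathbb{K}}(\mathfrak{a})$. Since $-\zeta_{\mathbb{K}}'/\zeta_{\mathbb{K}}(s) = \sum_{\mathfrak{a}}\Lambda_{\mathbb{K}}(\mathfrak{a})\mathcal{N}(\mathfrak{a})^{-s}$, a truncated Perron formula (with $c=1+1/\log z$ and height $T$) expresses $\psi_{\mathbb{K}}(z)$ as a contour integral up to a controllable error. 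Shifting the contour leftward into the zero-free region, the pole of $-\zeta_{\mathbb{K}}'/\zeta_{\mathbb{K}}$ at $s=1$ contributes the main term $z$; the remaining contour pieces are bounded using standard estimates on $\zeta_{\mathbb{K}}'/\zeta_{\mathbb{K}}$ in the zero-free region. Optimising the choice of $T$ yields
\[
\psi_{\mathbb{K}}(z) = z + O\!\left(z\exp(-C\sqrt{\log z})\right).
\]

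To pass to $\sharp\mathbb{P}(z)$, I would first dispose of the prime-power contribution: the number of prime ideals of norm at most $\sqrt{z}$ is $\leq [\mathbb{K}:\mathbb{Q}]\cdot\pi(\sqrt{z}) \ll \sqrt{z}$ (since each rational prime has at most $[\mathbb{K}:\mathbb{Q}]$ prime ideals above it), so the contribution of $\mathfrak{p}^k$ with $k\geq 2$ to $\psi_{\mathbb{K}}(z)$ is $O(\sqrt{z}\log z)$, absorbed into the error. Hence $\theta_{\mathbb{K}}(z) \coloneqq \sum_{\mathfrak{p}\in\mathbb{P}(z)}\log\mathcal{N}(\mathfrak{p}) = z + O(z\exp(-C'\sqrt{\log z}))$. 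Partial summation then gives
\[
\sharp\mathbb{P}(z) = \frac{\theta_{\mathbb{K}}(z)}{\log z} + \int_{2}^{z} \frac{\theta_{\mathbb{K}}(t)}{t\log^2 t}\,\dd{t} = \int_{2}^{z}\frac{\dd{t}}{\log t} + O\!\left(z\exp(-C''\sqrt{\log z})\right),
\]
and the second form in the statement follows from the elementary expansion $\int_2^z \dd{t}/\log t = z/\log z + O(z/\log^2 z)$.

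The main obstacle is the zero-free region for $\zeta_{\mathbb{K}}$. Its proof uses the inequality $3+4\cos\theta + \cos 2\theta \geq 0$ applied to the Euler product, combined with upper bounds on $\zeta_{\mathbb{K}}$ in the strip $0<\Re(s)\leq 2$ obtained via the functional equation and the Phragm\'en--Lindel\"of principle. This step encodes the non-vanishing of $\zeta_{\mathbb{K}}$ on $\Re(s)=1$ (Hecke's generalisation of the Hadamard argument). Since the present paper only needs the statement as a black box, in the write-up I would state it and cite a standard reference (e.g.\ Lang, \emph{Algebraic Number Theory}, Chapter XV, or Narkiewicz) rather than reproduce the proof.
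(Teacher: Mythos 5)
Your proposal is correct: it is the standard proof of Landau's prime ideal theorem (zero-free region for $\zeta_{\mathbb{K}}$, Perron's formula for $-\zeta_{\mathbb{K}}'/\zeta_{\mathbb{K}}$, contour shift, removal of prime powers, partial summation), and the paper itself simply cites Montgomery--Vaughan, Theorem~8.9, for exactly this statement. Since you also conclude by deferring to a standard reference, your approach coincides with the paper's.
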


\begin{proof} See, for instance,~\cite[Theorem~8.9]{VaughanMontgomery}.
\end{proof}

Using Lemmas~\ref{Buchstab} and~\ref{LPIT}, we shall derive the following asymptotic estimate for the number of ideals without prime ideal divisors of small norm. 

\begin{Proposition} \label{roughideals} Let $N\ge 2$. For $\mathfrak{a}\in \mathcal{I}$ set
\begin{equation} \label{wchar}
W(\mathfrak{a})=1_{1\le \mathcal{N}(\mathfrak{a})\le N}.
\end{equation}
Let $\beta>2>\alpha>1$, $\mathfrak{p}\in \mathbb{P}$ and $\mathfrak{r}\in \mathcal{I}\setminus \{0\}$ and set
$$
y\coloneqq\frac{N}{\mathcal{N}(\mathfrak{r})} \quad \mbox{and} \quad u\coloneqq\frac{\log y}{\log \mathcal{N}(\mathfrak{p})}.
$$
Assume that
$$
\mathcal{N}(\mathfrak{r})\le N/2 \quad \mbox{and} \quad \alpha\le u \le \beta.
$$
Then,
\begin{equation} \label{genasymp}
\Phi_{\mathfrak{r}}\left(W,\mathfrak{p}\right)= \mathcal{B}(u)
\cdot \frac{y}{\log \mathcal{N}(\mathfrak{p})}\cdot \left(1+O_{\alpha,\beta}\left(\frac{1}{\log y}\right)\right),
\end{equation}
where $\mathcal{B}(u)$ is the Buchstab function, the unique continuous solution of the system
\begin{equation*}
\begin{split}
\mathcal{B}(u)&= \frac{1}{u} \mbox{ for } 1\le u\le 2,\\
\frac{d}{du}(u\mathcal{B}(u))&= \mathcal{B}(u-1) \mbox{ for } u>2.
\end{split}
\end{equation*}
\end{Proposition}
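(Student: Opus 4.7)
The plan is to induct on $\lfloor u \rfloor$, handling the base range directly with Landau's prime ideal theorem (Lemma~\ref{LPIT}) and then using Buchstab's identity (Lemma~\ref{Buchstab}) in the inductive step.

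\textbf{Base case} ($\alpha \le u \le 2$). Here $\mathcal{N}(\mathfrak{p}) \ge y^{1/2}$, so any ideal $\mathfrak{a}$ with $\mathcal{N}(\mathfrak{a}) \le y$ and all prime divisors $\succeq \mathfrak{p}$ is either the unit ideal, a single prime $\mathfrak{q}$ satisfying $\mathfrak{q} \succeq \mathfrak{p}$ and $\mathcal{N}(\mathfrak{q}) \le y$, or comes from an $O(1)$ contribution of products of at least two primes (arising only in the borderline case $u = 2$). This reduces $\Phi_{\mathfrak{r}}(W,\mathfrak{p})$ to $\sharp\mathbb{P}(y+1) - \sharp\mathbb{P}(\mathfrak{p}) + O(1)$. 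Lemma~\ref{LPIT} gives $\sharp\mathbb{P}(y+1) = y/\log y + O(y/\log^2 y)$ and $\sharp\mathbb{P}(\mathfrak{p}) \ll \mathcal{N}(\mathfrak{p})/\log\mathcal{N}(\mathfrak{p})$, the latter being absorbed into $O_\alpha(y/\log^2 y)$ since $\mathcal{N}(\mathfrak{p}) \le y^{1/\alpha}$ with $\alpha > 1$. Using $\mathcal{B}(u) = 1/u$ on $[1,2]$ and $\log\mathcal{N}(\mathfrak{p}) = \log y/u$ yields the desired asymptotic.

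\textbf{Inductive step} ($u > 2$). Apply Lemma~\ref{Buchstab} with $\mathfrak{q} = \mathfrak{Q}(y^{1/2})$ to obtain
\begin{equation*}
    \Phi_{\mathfrak{r}}(W,\mathfrak{p}) = \Phi_{\mathfrak{r}}(W,\mathfrak{q}) + \sum_{\mathfrak{p} \preceq \mathfrak{s} \prec \mathfrak{q}} \Phi_{\mathfrak{r}\mathfrak{s}}(W,\mathfrak{s}).
\end{equation*}
The first summand has $u$-parameter $\approx 2$ and by the base case contributes $y/\log y$. For each term in the sum, $\Phi_{\mathfrak{r}\mathfrak{s}}(W,\mathfrak{s})$ has $y$-parameter $y/\mathcal{N}(\mathfrak{s})$ and $u$-parameter $u_{\mathfrak{s}} = \log(y/\mathcal{N}(\mathfrak{s}))/\log\mathcal{N}(\mathfrak{s}) \in [1, u-1]$; the inductive hypothesis, valid when $u_{\mathfrak{s}}$ is bounded away from $1$, gives $\Phi_{\mathfrak{r}\mathfrak{s}}(W,\mathfrak{s}) = \mathcal{B}(u_{\mathfrak{s}}) \cdot (y/\mathcal{N}(\mathfrak{s}))/\log\mathcal{N}(\mathfrak{s}) \cdot (1 + O(1/\log y))$. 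Partial summation over $\mathfrak{s}$ via Lemma~\ref{LPIT}, combined with the substitution $t = y^{1/(v+1)}$, converts the sum into the Riemann integral $(y/\log y)\int_1^{u-1} \mathcal{B}(v)\,dv$. Combining with the first-term contribution $y/\log y$ and invoking the identity $u\mathcal{B}(u) = 1 + \int_1^{u-1} \mathcal{B}(v)\,dv$ (which follows by integrating $(u\mathcal{B}(u))' = \mathcal{B}(u-1)$ from $2$ to $u$) gives the claimed asymptotic $\mathcal{B}(u) \cdot y/\log\mathcal{N}(\mathfrak{p})$.

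The main obstacle is the boundary behaviour as $u_{\mathfrak{s}} \to 1^+$, where the inductive estimate for $\Phi_{\mathfrak{r}\mathfrak{s}}$ degrades. The thin range $y^{1/(2+\delta)} < \mathcal{N}(\mathfrak{s}) < y^{1/2}$ has to be handled separately via the base-case count, with $\delta$ balanced so that both the trivial contribution from this range and the resulting shortfall $(y/\log y)\int_1^{1+\delta}\mathcal{B}(v)\,dv$ are absorbed into the overall error. Careful bookkeeping through the at most $\lceil\beta\rceil$ iterations then keeps the cumulative relative error at $O_{\alpha,\beta}(1/\log y)$.
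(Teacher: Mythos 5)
Your proposal is correct and follows essentially the same route as the paper: the base range $1<u\le 2$ via Landau's prime ideal theorem, then Buchstab's identity plus partial summation to produce the integral $\int_1^{u-1}\mathcal{B}(v)\,dv$ and the recursion $u\mathcal{B}(u)=1+\int_1^{u-1}\mathcal{B}(v)\,dv$. The only (harmless) differences are bookkeeping: the paper cuts at $\mathfrak{Q}(y^{1/k})$ for $k<u\le k+1$ so each step only invokes the immediately preceding unit interval, and it avoids your thin-range splitting near $u_{\mathfrak{s}}=1$ by carrying the secondary error term $O(\mathcal{N}(\mathfrak{s})/\log\mathcal{N}(\mathfrak{s}))$ explicitly and absorbing it only after summing over $\mathfrak{s}$.
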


\begin{proof}  This can be established along similar lines as in the basic case $\mathbb{K}=\mathbb{Q}$, as described in~\cite[section 6.2]{Tenenbaum}. 
We start with establishing an asymptotic formula for $\Phi_{\mathfrak{r}}\left(W,\mathfrak{p}\right)$ in the range $1<u\le 2$ and then iteratively extend this range by deriving an asymptotic formula for $k+1<u\le k+2$ from an asymptotic formula for $k<u\le k+1$, where $k$ is any positive integer. 

The condition $1<u\le 2$ is equivalent to $y^{1/2}\le \mathcal{N}(\mathfrak{p})< y$, in which case we have 
\begin{equation} \label{basecase}
\begin{split}
\Phi_{\mathfrak{r}}(W,\mathfrak{p})& = \sum_{\substack{1\le \mathcal{N}(\mathfrak{a})\le y\\(\mathfrak{a},\Pi(\mathfrak{p}))=1}} 1\\ 
&= \sharp \mathbb{P}(y)-\sharp \mathbb{P}(\mathfrak{p})+1\\
&= \sharp \mathbb{P}(y)-\sharp \mathbb{P}(\mathcal{N}(\mathfrak{p}))+O(1)
\\ &=
\frac{y}{\log y} + O\left(\frac{y}{\log^2 y}+\frac{\mathcal{N}(\mathfrak{p})}{\log \mathcal{N}(\mathfrak{p})}\right)
\end{split}
\end{equation}
using Lemma~\ref{LPIT}. Hence, if $1<\alpha\le u\le 2$, then
$$
\Phi_{\mathfrak{r}}(W,\mathfrak{p})=
\frac{y}{\log y} + O_{\alpha}\left(\frac{y}{\log^2 y}\right),
$$
and therefore~\eqref{genasymp} holds in this range.  

Next we turn to the range $2<u\le 3$ which corresponds to $y^{1/3}\le  \mathcal{N}(\mathfrak{p})< y^{1/2}$. If $z>0$, recall that $\mathfrak{Q}(z)$ denotes the smallest prime ideal $\mathfrak{q}$ with respect to the ordering $\prec$ such that $\mathcal{N}(\mathfrak{q})\ge z$. In this case, Lemma~\ref{Buchstab} with
$\mathfrak{q}=\mathfrak{Q}\left(y^{1/2}\right)$ gives
\begin{equation} \label{BSapp}
\Phi_{\mathfrak{r}}(W,\mathfrak{p})= \Phi_{\mathfrak{r}}\left(W,\mathfrak{Q}\left(y^{1/2}\right)\right)+\sum\limits_{\substack{\mathfrak{p}\preceq \mathfrak{s} \prec \mathfrak{Q}\left(y^{1/2}\right)}} 
\Phi_{\mathfrak{rs}}(W,\mathfrak{s}).
\end{equation}
Using~\eqref{basecase}, we have
\begin{equation} \label{firstterm}
\Phi_{\mathfrak{r}}\left(W,\mathfrak{Q}\left(y^{1/2}\right)\right)
=\frac{y}{\log y}+O\left(\frac{y}{\log^2 y}\right).
\end{equation}
Further, we set  
\begin{equation} \label{yu}
y'\coloneqq\frac{N}{\mathcal{N}(\mathfrak{rs})}=\frac{y}{\mathcal{N}(\mathfrak{s})} \quad \mbox{and} \quad 
u'\coloneqq\frac{\log y'}{\log \mathcal{N}(\mathfrak{s})}=\frac{\log y}{\log \mathcal{N}(\mathfrak{s})}-1
\end{equation}
and observe that
$$
1<u'\le 2
$$
if $\mathfrak{Q}\left(y^{1/3}\right) \preceq \mathfrak{p}\preceq \mathfrak{s} \prec \mathfrak{Q}\left(y^{1/2}\right)$. Hence, applying~\eqref{basecase} with $y'$ in place of $y$ and $\mathfrak{s}$ in place of $\mathfrak{p}$, we obtain
\begin{equation} \label{secondterm}
\begin{split}
\Phi_{\mathfrak{rs}}(W,\mathfrak{s})&= \frac{y'}{\log y'} + O\left(\frac{y'}{\log^2 y'
}+\frac{\mathcal{N}(\mathfrak{s})}{\log \mathcal{N}(\mathfrak{s})}\right)\\ 
&= \frac{y/\mathcal{N}(\mathfrak{s})}{\log (y/\mathcal{N}(\mathfrak{s}))} + O\left(\frac{y/\mathcal{N}(\mathfrak{s})}{\log^2(y/\mathcal{N}(\mathfrak{s}))}+\frac{\mathcal{N}(\mathfrak{s})}{\log \mathcal{N}(\mathfrak{s})}\right)
\end{split}
\end{equation}
under this condition. Plugging~\eqref{firstterm} and~\eqref{secondterm} into 
\eqref{BSapp} gives
\begin{equation*} 
\begin{split}
\Phi_{\mathfrak{r}}(W,\mathfrak{p})&= \frac{y}{\log y}+\sum\limits_{\substack{\mathfrak{p}\preceq \mathfrak{s} \prec \mathfrak{Q}\left(y^{1/2}\right)}} \frac{y/\mathcal{N}(\mathfrak{s})}{\log (y/\mathcal{N}(\mathfrak{s}))}+O\left(\frac{y}{\log^2 y}\right)+\\
&\quad + O\left(\sum\limits_{\substack{\mathfrak{p}\preceq \mathfrak{s} \prec \mathfrak{Q}\left(y^{1/2}\right)}} \left(\frac{y/\mathcal{N}(\mathfrak{s})}{\log^2(y/\mathcal{N}(\mathfrak{s}))}+\frac{\mathcal{N}(\mathfrak{s})}{\log \mathcal{N}(\mathfrak{s})}\right)\right).
\end{split}
\end{equation*}
Using Lemma~\ref{LPIT}, it is easily seen that the second $O$-term can be absorbed into the first one. Moreover, a standard application of partial summation together with Lemma~\ref{LPIT} gives (cf.~\cite[page 399]{Tenenbaum}) 
\begin{equation} \label{derivation}
\sum\limits_{\substack{\mathfrak{p}\preceq \mathfrak{s} \prec \mathfrak{Q}\left(y^{1/2}\right)}} \frac{y/\mathcal{N}(\mathfrak{s})}{\log (y/\mathcal{N}(\mathfrak{s}))}=\frac{\log(u-1)}{u}\cdot \frac{y}{\log \mathcal{N}(\mathfrak{p})}+O\left(\frac{y}{\log^2 y}\right). 
\end{equation}
Altogether, we thus obtain
\begin{equation} \label{2case}
\begin{split}
\Phi_{\mathfrak{r}}(W,\mathfrak{p})&= \frac{y}{\log y}+\frac{\log(u-1)}{u}\cdot \frac{y}{\log \mathcal{N}(\mathfrak{p})}+O\left(\frac{y}{\log^2 y}\right)\\
&= \frac{1+\log(u-1)}{u}\cdot \frac{y}{\log \mathcal{N}(\mathfrak{p})}
+O\left(\frac{y}{\log^2 y}\right)\\
&= \mathcal{B}(u) \cdot \frac{y}{\log \mathcal{N}(\mathfrak{p})}+O\left(\frac{y}{\log^2 y}\right),
\end{split}
\end{equation}
which implies~\eqref{genasymp} for $2<u\le 3$. 

Next, we assume that $3<u\le 4$ and hence $y^{1/4}\le  \mathcal{N}(\mathfrak{p})< y^{1/3}$. Applying Lemma~\ref{Buchstab} with
$\mathfrak{q}=\mathfrak{Q}\left(y^{1/3}\right)$ then gives
\begin{equation} \label{BSapp2}
\Phi_{\mathfrak{r}}(W,\mathfrak{p})= \Phi_{\mathfrak{r}}\left(W,\mathfrak{Q}\left(y^{1/3}\right)\right)+\sum\limits_{\substack{\mathfrak{p}\preceq \mathfrak{s} \prec \mathfrak{Q}\left(y^{1/3}\right)}} 
\Phi_{\mathfrak{rs}}(W,\mathfrak{s}).
\end{equation}
From~\eqref{2case}, we deduce that
\begin{equation} \label{firstterm2}
\Phi_{\mathfrak{r}}\left(W,\mathfrak{Q}\left(y^{1/3}\right)\right)
=\mathcal{B}(3)\cdot \frac{y}{\log y^{1/3}}+O\left(\frac{y}{\log^2 y}\right).
\end{equation}
Further, we define $y'$ and $u'$ as in~\eqref{yu} and observe that
$$
2<u'\le 3
$$
if $\mathfrak{Q}\left(y^{1/4}\right) \preceq \mathfrak{p}\preceq \mathfrak{s} \prec \mathfrak{Q}\left(y^{1/3}\right)$. Hence, applying~\eqref{2case} with $y'$ in place of $y$ and $\mathfrak{s}$ in place of $\mathfrak{p}$, we obtain 
\begin{equation} \label{secondterm2}
\begin{split}
\Phi_{\mathfrak{rs}}(W,\mathfrak{s})& = \mathcal{B}(u')\cdot \frac{y'}{\log \mathcal{N}(\mathfrak{s})} + O\left(\frac{y'}{\log^2 y'}\right)\\ 
& = \mathcal{B}\left(\frac{\log y}{\log \mathcal{N}(\mathfrak{s})}-1\right) \cdot \frac{y/\mathcal{N}(\mathfrak{s})}{\log \mathcal{N}(\mathfrak{s})} + O\left(\frac{y/\mathcal{N}(\mathfrak{s})}{\log^2(y/\mathcal{N}(\mathfrak{s}))}\right)
\end{split}
\end{equation}
under this condition. Plugging~\eqref{firstterm2} and~\eqref{secondterm2} into 
\eqref{BSapp2} gives
\begin{equation} \label{BSapp3}
\begin{split}
\Phi_{\mathfrak{r}}(W,\mathfrak{p})&= \mathcal{B}(3)\cdot \frac{y}{\log y^{1/3}}
 +\sum\limits_{\substack{\mathfrak{p}\preceq \mathfrak{s} \prec \mathfrak{Q}\left(y^{1/3}\right)}} \mathcal{B}\left(\frac{\log y}{\log \mathcal{N}(\mathfrak{s})}-1\right) \cdot \frac{y/\mathcal{N}(\mathfrak{s})}{\log \mathcal{N}(\mathfrak{s})}+\\ & \quad + O\left(\frac{y}{\log^2 y}\right)+
 O\left(\sum\limits_{\substack{\mathfrak{p}\preceq \mathfrak{s} \prec \mathfrak{Q}\left(y^{1/3}\right)}} \frac{y/\mathcal{N}(\mathfrak{s})}{\log^2(y/\mathcal{N}(\mathfrak{s}))}\right).
\end{split}
\end{equation}
Using Lemma~\ref{LPIT}, it is again easily seen that the second $O$-term can be absorbed into the first one. Moreover, a standard application of partial summation together with Lemma~\ref{LPIT} gives (see~\cite[page 399]{Tenenbaum} for details)
\begin{equation}\label{derivation2}
\begin{aligned}
	\MoveEqLeft
\mathcal{B}(3)\cdot \frac{y}{\log y^{1/3}}
 +\sum\limits_{\substack{\mathfrak{p}\preceq \mathfrak{s} \prec \mathfrak{Q}\left(y^{1/3}\right)}} \mathcal{B}\left(\frac{\log y}{\log \mathcal{N}(\mathfrak{s})}-1\right) \cdot \frac{y/\mathcal{N}(\mathfrak{s})}{\log \mathcal{N}(\mathfrak{s})} \\ &
 = \frac{1}{u}\cdot \left(1+\int\limits_1^{u-1} \mathcal{B}(v)dv\right)\cdot \frac{y}{\log \mathcal{N}(\mathfrak{p})}+O\left(\frac{y}{\log^2 y}\right)\\ &
 =
 \mathcal{B}(u)\cdot  \frac{y}{\log \mathcal{N}(\mathfrak{p})}+O\left(\frac{y}{\log^2 y}\right).
\end{aligned}
\end{equation}
Altogether, we thus obtain
\begin{equation}
\begin{split}
\Phi_{\mathfrak{r}}(W,\mathfrak{p})= \mathcal{B}(u)\cdot \frac{y}{\log \mathcal{N}(\mathfrak{p})}+O\left(\frac{y}{\log^2 y}\right),
\end{split}
\end{equation}
which implies~\eqref{genasymp} if $3<u\le 4$. Iterating this procedure, we establish~\eqref{genasymp} for general $u$. The dependence of the $O$-term on $\beta$ therein comes from the iterations. If $k<\beta\le k+1$ with $k\in \mathbb{N}$, then we need $k$ iterations to establish~\eqref{genasymp}.  
\end{proof}

\subsection{General weight functions}
Now we extend Proposition~\ref{roughideals} to more general weight functions. Here we need to replace the prime number theorem by a suitable condition on averages of $W(\mathfrak{rs})$, where $\mathfrak{r}\in \mathcal{O}\setminus \{0\}$ is fixed and $\mathfrak{s}$ runs over prime ideals. We shall
assume that $W=W_{N} : \mathcal{I}\rightarrow \mathbb{R}^+$ is a weight function depending on a variable $N\ge 10$ which satisfies asymptotics of the form 
\begin{equation} \label{keycond}
\boxed{
\sum\limits_{\mathfrak{s}\in \mathbb{P}} W(\mathfrak{rs})=
\frac{N/\mathcal{N}(\mathfrak{r})}{\log(N/\mathcal{N}(\mathfrak{r}))}\cdot \left(1+O\left(\frac{\log\log N}{\log(N/\mathcal{N}(\mathfrak{r}))}\right)\right)
\mbox{ if } \mathcal{N}(\mathfrak{r})\le N/2.}
\end{equation}
We also assume that partial sums of the above series are bounded by
\begin{equation} \label{finitecond}
\boxed{
\sum\limits_{\substack{\mathfrak{s}\in \mathbb{P}\\ \mathfrak{s}\prec \mathfrak{p}}} W(\mathfrak{rs})=O_{\varepsilon}\left(\frac{\mathcal{N}(\mathfrak{p})}{\log \mathcal{N}(\mathfrak{p})}\right)
\mbox{ if } \mathcal{N}(\mathfrak{p})\ge N^{\varepsilon} \mbox{ and } \mathcal{N}(\mathfrak{r})\le N/2}
\end{equation} 
for any $\varepsilon>0$. 
Moreover, we assume that ``tails''
$$
\sum\limits_{\substack{\mathfrak{a}\in \mathcal{O}\\ \mathcal{N}(\mathfrak{ar})>\tilde{N}}} W(\mathfrak{ar})
$$
with $\tilde{N}$ slightly larger than $N$ are small. Our precise condition is
\begin{equation} \label{tailcond}
\boxed{
\sum\limits_{\substack{\mathfrak{a}\in \mathcal{O}\\ \mathcal{N}(\mathfrak{ar})>\tilde{N}}} W(\mathfrak{ar})=O_{\xi}\left(\frac{N/\mathcal{N}(\mathfrak{r})}{\log^2 (N/\mathcal{N}(\mathfrak{r}))}\right) \mbox{ if } \tilde{N}=N\log^{\xi} N \mbox{ and }
\mathcal{N}(\mathfrak{r})\le N/2}
\end{equation}
for some $\xi\in (0,1)$. Generalizing Proposition~\ref{roughideals}, we establish the following. 

\begin{Theorem} \label{genweights} Fix $\varepsilon,\xi\in (0,1)$. Assume that
$W=W_{N}:\mathcal{I}\Rightarrow \mathbb{R}_{\ge 0}$ is a function depending on a variable $N\ge 10$ which 
satisfies the conditions~\eqref{keycond}, \eqref{finitecond} and~\eqref{tailcond}.
Let $\beta>2>\alpha>1$, $\mathfrak{p}\in \mathbb{P}$ and $\mathfrak{r}\in \mathcal{O}\setminus \{0\}$ and set
$$
y\coloneqq\frac{N}{\mathcal{N}(\mathfrak{r})} \quad \mbox{and} \quad u\coloneqq\frac{\log y}{\log \mathcal{N}(\mathfrak{p})}.
$$
Assume that
$$
\mathcal{N}(\mathfrak{r})\le N^{1-\varepsilon} \quad \mbox{and} \quad \alpha\le u \le \beta.
$$
Then,
\begin{equation} \label{genasymp3}
\Phi_{\mathfrak{r}}\left(W,\mathfrak{p}\right)= \mathcal{B}(u)
\cdot \frac{y}{\log \mathcal{N}(\mathfrak{p})}\cdot \left(1+O_{\alpha,\beta,\varepsilon,\xi}\left(\log^{\xi-1} y\right)\right),
\end{equation}
where $\mathcal{B}(u)$ is the Buchstab function. 
\end{Theorem}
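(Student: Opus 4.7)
The plan is to adapt the proof of Proposition~\ref{roughideals} very closely, replacing its use of Landau's prime ideal theorem (Lemma~\ref{LPIT}) by the hypothesized weighted asymptotic~\eqref{keycond}, with~\eqref{finitecond} playing the part of the trivial bound for short prime sums and~\eqref{tailcond} taking care of contributions from ideals whose norm overshoots the natural window of size $N$. The induction variable is again $\lceil u \rceil$ and Buchstab's identity (Lemma~\ref{Buchstab}) drives the iteration in the same fashion as in the characteristic case.

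For the base case $\alpha \le u \le 2$ one has $y^{1/2} \le \mathcal{N}(\mathfrak{p}) < y^{1/\alpha}$, so every $\mathfrak{a}$ counted in $\Phi_{\mathfrak{r}}(W,\mathfrak{p})$ is either $(1)$, a single prime $\mathfrak{s} \succeq \mathfrak{p}$, or has at least two prime factors (all $\succeq \mathfrak{p}$), in which case $\mathcal{N}(\mathfrak{ra}) \ge \mathcal{N}(\mathfrak{r})\mathcal{N}(\mathfrak{p})^2 \ge N$. The main term arises from
\begin{equation*}
\sum_{\mathfrak{s} \succeq \mathfrak{p}} W(\mathfrak{rs}) = \sum_{\mathfrak{s}\in\mathbb{P}} W(\mathfrak{rs}) - \sum_{\mathfrak{s} \prec \mathfrak{p}} W(\mathfrak{rs}),
\end{equation*}
which by~\eqref{keycond} and~\eqref{finitecond} equals $y/\log y + O(y\log\log N/\log^2 y + y^{1/\alpha}/\log y)$, matching $\mathcal{B}(u) \cdot y/\log \mathcal{N}(\mathfrak{p})$ because $\mathcal{B}(u) = 1/u$ on this range. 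The $\mathfrak{a}=(1)$ term and the multi-prime contribution are absorbed into the error by splitting according to whether $\mathcal{N}(\mathfrak{ra})>\tilde{N}$ (bounded directly by~\eqref{tailcond}) or $N \le \mathcal{N}(\mathfrak{ra}) \le \tilde{N}$ (estimated by a secondary Buchstab decomposition combined with~\eqref{keycond} and~\eqref{finitecond}), the net loss being $O(y/\log^{2-\xi} y)$.

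For the inductive step $k < u \le k+1$ with $k \ge 2$, we apply Lemma~\ref{Buchstab} with $\mathfrak{q} = \mathfrak{Q}(y^{1/k})$ to get
\begin{equation*}
\Phi_{\mathfrak{r}}(W,\mathfrak{p}) = \Phi_{\mathfrak{r}}\bigl(W,\mathfrak{Q}(y^{1/k})\bigr) + \sum_{\mathfrak{p} \preceq \mathfrak{s} \prec \mathfrak{Q}(y^{1/k})} \Phi_{\mathfrak{rs}}(W,\mathfrak{s}).
\end{equation*}
The first summand is handled by the inductive hypothesis at level $k$. For each $\mathfrak{s}$ in the sum, putting $y' = y/\mathcal{N}(\mathfrak{s})$ and $u' = \log y'/\log \mathcal{N}(\mathfrak{s}) \in (k-1, k]$, the inductive hypothesis applies with $\mathfrak{r}$ replaced by $\mathfrak{rs}$; here the hypotheses~\eqref{keycond}, \eqref{finitecond} and~\eqref{tailcond} transfer to $\mathfrak{rs}$ thanks to the estimate $\mathcal{N}(\mathfrak{rs}) \le \mathcal{N}(\mathfrak{r}) y^{1/k} \ll N^{1-\varepsilon/2}$, which is consistent with the standing assumption $\mathcal{N}(\mathfrak{r}) \le N^{1-\varepsilon}$. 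A partial summation argument mirroring~\eqref{derivation} and~\eqref{derivation2}---with~\eqref{keycond} in the role that Lemma~\ref{LPIT} played there---then converts the sum into an integral which, by the defining delay equation $(u\mathcal{B}(u))' = \mathcal{B}(u-1)$, telescopes to $\mathcal{B}(u) \cdot y/\log \mathcal{N}(\mathfrak{p})$.

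The principal obstacle is the careful tracking of error terms. The weaker tail hypothesis~\eqref{tailcond}---which only forces $W$ to be small beyond $\tilde{N} = N\log^\xi N$ rather than imposing a sharp cutoff at $N$---injects, at each Buchstab step, an error of relative size $\log^{\xi-1} y$; this is the precise source of the final bound $O_{\alpha,\beta,\varepsilon,\xi}(\log^{\xi-1} y)$ replacing the sharper $O_{\alpha,\beta}(1/\log y)$ of Proposition~\ref{roughideals}. Since the number of iterations is bounded in terms of $\beta$ alone, these errors accumulate only additively and the induction closes. A secondary technical point, already flagged above, is ensuring that the transfer of~\eqref{keycond}--\eqref{tailcond} from $\mathfrak{r}$ to $\mathfrak{rs}$ is legitimate throughout the iteration; this is precisely what the slightly stronger room $\mathcal{N}(\mathfrak{r}) \le N^{1-\varepsilon}$ (rather than $\mathcal{N}(\mathfrak{r}) \le N/2$) is designed to afford.
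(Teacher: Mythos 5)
Your skeleton is the right one and matches the paper's: Buchstab iteration on $\lceil u\rceil$, base case from the hypotheses, partial summation converting the prime sum into the delay-equation integral for $\mathcal{B}$. The genuine difference is how the mismatch between the sharp scale $N$ in \eqref{keycond} and the soft cutoff $\tilde{N}=N\log^{\xi}N$ in \eqref{tailcond} is handled. The paper never confronts your ``overshoot window'' $N\le\mathcal{N}(\mathfrak{ar})\le\tilde{N}$ at all: it sets $\tilde{y}=\tilde{N}/\mathcal{N}(\mathfrak{r})$, $\tilde{u}=\log\tilde{y}/\log\mathcal{N}(\mathfrak{p})$, runs the entire induction in $\tilde{u}$ with Buchstab thresholds at $\tilde{y}^{1/k}$ (so that in the base case $\tilde{y}^{1/2}\le\mathcal{N}(\mathfrak{p})$ forces every ideal with two or more prime factors beyond $\tilde{y}$, where \eqref{tailcond} kills it), and only at the very end converts $\mathcal{B}(\tilde{u})$ back to $\mathcal{B}(u)$ via the Lipschitz bound $|\mathcal{B}(\tilde{u})-\mathcal{B}(u)|\ll\log\log N/\log y\ll_{\varepsilon,\xi}\log^{\xi-1}y$, which is where the hypothesis $\mathcal{N}(\mathfrak{r})\le N^{1-\varepsilon}$ is really spent. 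Your route keeps the thresholds at powers of $y$ and must therefore bound the two-prime contribution with $\mathcal{N}(\mathfrak{s}_1\mathfrak{s}_2)\in[y,\tilde{y}]$ by hand; be warned that the estimate you gesture at is delicate: bounding the inner sum over $\mathfrak{s}_2$ by \eqref{finitecond} loses a full factor $\log^{\xi}N$ twice and only yields $O(y/\log^{2-2\xi}y)$, which is weaker than claimed. To recover $O(y/\log^{2-\xi}y)$ you should instead note that the window is nonempty only when $\mathcal{N}(\mathfrak{p})\le\tilde{y}^{1/2}$, bound the inner sum over $\mathfrak{s}_2$ by the \emph{full} prime sum \eqref{keycond} (legitimate since $W\ge0$ and $\mathcal{N}(\mathfrak{r}\mathfrak{s}_1)\le N/2$), and count the outer primes $\mathfrak{s}_1\in[y^{1/2},\tilde{y}^{1/2}]$ with Lemma~\ref{LPIT}. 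One further correction: in the inductive step the partial summation of $\sum_{\mathfrak{s}}\mathcal{B}(u')\cdot\frac{y/\mathcal{N}(\mathfrak{s})}{\log\mathcal{N}(\mathfrak{s})}$ is driven by Landau's prime ideal theorem exactly as in \eqref{derivation} and \eqref{derivation2} --- the summand is a smooth function of $\mathcal{N}(\mathfrak{s})$ and one is counting prime ideals, so \eqref{keycond} plays no role there; it enters only through the base case.
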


\begin{proof} We shall imitate the proof of Proposition~\ref{roughideals}. In this proof, all implied $O$-constants will be allowed to depend on $\alpha,\beta,\varepsilon,\xi$.
We define two more parameters $\tilde{y}$ and $\tilde{u}$ by
$$
\tilde{y}\coloneqq\frac{\tilde{N}}{\mathcal{N}(\mathfrak{r})}=\frac{N\log^{\xi} N}{\mathcal{N}(\mathfrak{r})}\quad \mbox{and} \quad \tilde{u}\coloneqq\frac{\log \tilde{y}}{\log \mathcal{N}(\mathfrak{p})}
$$
and establish that 
\begin{equation} \label{genasymp4}
\Phi_{\mathfrak{r}}\left(W,\mathfrak{p}\right)= \mathcal{B}(\tilde{u})
\cdot \frac{y}{\log \mathcal{N}(\mathfrak{p})}\cdot \left(1+O\left(\log^{-\xi} y \right)\right).
\end{equation}
We observe that
\begin{equation} \label{uu'}
|\mathcal{B}(\tilde{u})-\mathcal{B}(u)|\ll_{\beta} |\tilde{u}-u| = \frac{\log(\log N)^{\xi}}{\log  \mathcal{N}(\mathfrak{p})}\ll_{\beta,\varepsilon,\xi} \frac{\log\log N}{\log y}
\ll_{\beta,\varepsilon,\xi} \frac{1}{\log^{1-\xi}y},  
\end{equation}
where we take into account that $y\ge N^{\varepsilon}$ by the assumptions in Theorem~\ref{genweights}.
Hence, \eqref{genasymp4} implies~\eqref{genasymp3}. In the following we will stop indicating possible dependencies of the implied constants on $\alpha,\beta,\varepsilon,\xi$. 

Similarly as in the proof of Proposition~\eqref{roughideals}, we begin with considering the range $1<\tilde{u}\le 2$ which corresponds to $\tilde{y}^{1/2}\le \mathcal{N}(\mathfrak{p})< \tilde{y}$. We write 
\begin{equation} \label{basecase'}
\begin{split}
\Phi_{\mathfrak{r}}(W,\mathfrak{p})= \sum_{\substack{\mathcal{N}(\mathfrak{a})\le \tilde{y}\\(\mathfrak{a},\Pi(\mathfrak{p}))=1}} W(\mathfrak{ar}) +
\sum_{\substack{\mathcal{N}(\mathfrak{a})> \tilde{y}\\(\mathfrak{a},\Pi(\mathfrak{p}))=1}} W(\mathfrak{ar}).
\end{split}
\end{equation}
Using condition~\eqref{tailcond}, the second sum on the right-hand side is bounded by 
\begin{equation} \label{tailsum}
\sum_{\substack{\mathcal{N}(\mathfrak{a})> \tilde{y}\\(\mathfrak{a},\Pi(\mathfrak{p}))=1}} W(\mathfrak{ar})=O\left(\frac{y}{\log^2 y}\right).
\end{equation}
The first sum on the right-hand side of~\eqref{basecase'} equals 
\begin{equation*}
\sum_{\substack{\mathcal{N}(\mathfrak{a})\le \tilde{y}\\(\mathfrak{a},\Pi(\mathfrak{p}))=1}} W(\mathfrak{ar})=\sum\limits_{\substack{\mathfrak{s}\in \mathbb{P}\\ \mathfrak{p}\preceq \mathfrak{s} \prec \mathfrak{Q}(\tilde{y})}} W(\mathfrak{sr}) 
\end{equation*}
since $\tilde{y}^{1/2}\le \mathcal{N}(\mathfrak{p})< \tilde{y}$. Using all three conditions~\eqref{keycond}, \eqref{finitecond} and~\eqref{tailcond}, we deduce that
\begin{equation} \label{mainsum}
\sum_{\substack{\mathcal{N}(\mathfrak{a})\le \tilde{y}\\(\mathfrak{a},\Pi(\mathfrak{p}))=1}} W(\mathfrak{ar})
=\frac{y}{\log y} + O\left(\frac{y}{\log^{2-\xi} y}+\frac{\mathcal{N}(\mathfrak{p})}{\log \mathcal{N}(\mathfrak{p})}\right).
\end{equation}
A short calculation confirms that
\begin{equation} \label{short}
\frac{y}{\log y}
=\frac{y}{\log \tilde{y}}+O\left(\frac{y}{\log^{2-\xi} y}\right).
\end{equation}
Combining~\eqref{basecase'}, \eqref{tailsum}, \eqref{mainsum} and~\eqref{short}, we obtain
\begin{equation} \label{basecase''}
\Phi_{\mathfrak{r}}(W,\mathfrak{p})=\frac{y}{\log \tilde{y}} + O\left(\frac{y}{\log^{2-\xi} y}+\frac{\mathcal{N}(\mathfrak{p})}{\log \mathcal{N}(\mathfrak{p})}\right)
\end{equation}
if $1<\tilde{u}\le 2$. If, in addition, $u\ge \alpha>1$, then making use of our bound for $|\tilde{u}-u|$ in~\eqref{uu'}, we see that the second $O$-term is majorized by the first term and hence,
\begin{equation*}
\Phi_{\mathfrak{r}}(W,\mathfrak{p})=\frac{y}{\log \tilde{y}} + O\left(\frac{y}{\log^{2-\xi} y}\right). 
\end{equation*}
Now~\eqref{genasymp4} follows under the above conditions $1<\tilde{u}\le 2$ and $u\ge  \alpha$. 

Next we turn to the range $2<\tilde{u}\le 3$ which corresponds to $\tilde{y}^{1/3}\le  \mathcal{N}(\mathfrak{p})< \tilde{y}^{1/2}$. In this case, Lemma~\ref{Buchstab} with
$\mathfrak{q}=\mathfrak{Q}\left(\tilde{y}^{1/2}\right)$ gives
\begin{equation} \label{BSapp'}
\Phi_{\mathfrak{r}}(W,\mathfrak{p})= \Phi_{\mathfrak{r}}\left(W,\mathfrak{Q}\left(\tilde{y}^{1/2}\right)\right)+\sum\limits_{\substack{\mathfrak{p}\preceq \mathfrak{s} \prec \mathfrak{Q}\left(\tilde{y}^{1/2}\right)}} 
\Phi_{rs}(W,\mathfrak{s}).
\end{equation}
Using~\eqref{basecase''}, we have
\begin{equation} \label{firstterm'}
\Phi_{\mathfrak{r}}\left(W,\mathfrak{Q}\left(\tilde{y}^{1/2}\right)\right)
=\frac{y}{\log \tilde{y}}+O\left(\frac{y}{\log^{2-\xi} y}\right).
\end{equation}
Further, we set  
\begin{equation} \label{yu'}
y'\coloneqq\frac{N}{\mathcal{N}(\mathfrak{rs})}=\frac{y}{\mathcal{N}(\mathfrak{s})}, \quad \tilde{y}'\coloneqq\frac{\tilde{N}}{\mathcal{N}(\mathfrak{rs})}=\frac{\tilde{y}}{\mathcal{N}(\mathfrak{s})}\quad \mbox{and} \quad 
u'\coloneqq\frac{\log \tilde{y}'}{\log \mathcal{N}(\mathfrak{s})}=\frac{\log \tilde{y}}{\log \mathcal{N}(\mathfrak{s})}-1
\end{equation}
and observe that
$$
1<u'\le 2
$$
if $\mathfrak{Q}\left(\tilde{y}^{1/3}\right) \preceq \mathfrak{p}\preceq \mathfrak{s} \prec \mathfrak{Q}\left(\tilde{y}^{1/2}\right)$. Hence, applying~\eqref{basecase''} with $y'$ in place of $y$, $\tilde{y}'$ in place of $\tilde{y}$ and $\mathfrak{s}$ in place of $\mathfrak{p}$, we obtain
\begin{equation} \label{secondterm'}
\begin{split}
\Phi_{\mathfrak{rs}}(W,\mathfrak{s})& = \frac{y'}{\log \tilde{y}'} + O\left(\frac{y'}{\log^{2-\xi} y'
}+\frac{\cdot \mathcal{N}(\mathfrak{s})}{\log \mathcal{N}(\mathfrak{s})}\right)\\ 
& = \frac{y/\mathcal{N}(\mathfrak{s})}{\log (\tilde{y}/\mathcal{N}(\mathfrak{s}))} + O\left(\frac{y/\mathcal{N}(\mathfrak{s})}{\log^{2-\xi}(y/\mathcal{N}(\mathfrak{s}))}+\frac{\mathcal{N}(\mathfrak{s})}{\log \mathcal{N}(\mathfrak{s})}\right)
\end{split}
\end{equation}
under this condition. Plugging~\eqref{firstterm'} and~\eqref{secondterm'} into 
\eqref{BSapp'} gives
\begin{equation*}
\begin{split}
\Phi_{\mathfrak{r}}(W,\mathfrak{p})& = \frac{y}{\log \tilde{y}}+\sum\limits_{\substack{\mathfrak{p}\preceq \mathfrak{s} \prec \mathfrak{Q}\left(\tilde{y}^{1/2}\right)}} \frac{y/\mathcal{N}(\mathfrak{s})}{\log (\tilde{y}/\mathcal{N}(\mathfrak{s}))}+O\left(\frac{y}{\log^{2-\xi} y}\right)+\\
& \quad +O\left(\sum\limits_{\substack{\mathfrak{p}\preceq \mathfrak{s} \prec \mathfrak{Q}\left(\tilde{y}^{1/2}\right)}} \left(\frac{y/\mathcal{N}(\mathfrak{s})}{\log^{2-\xi}(y/\mathcal{N}(\mathfrak{s}))}+\frac{\mathcal{N}(\mathfrak{s})}{\log \mathcal{N}(\mathfrak{s})}\right)\right).
\end{split}
\end{equation*}
Using Lemma~\ref{LPIT}, the second $O$-term is of size
\begin{align*}
	\MoveEqLeft
	\sum\limits_{\substack{\mathfrak{p}\preceq \mathfrak{s} \prec \mathfrak{Q}\left(\tilde{y}^{1/2}\right)}} \left(\frac{y/\mathcal{N}(\mathfrak{s})}{\log^{2-\xi}(y/\mathcal{N}(\mathfrak{s}))}+\frac{\mathcal{N}(\mathfrak{s})}{\log \mathcal{N}(\mathfrak{s})}\right)\\ &\ll \frac{y}{\log^{2-\xi} y}+\frac{\tilde{y}}{\log^2 y} =\frac{y}{\log^{2-\xi} y}+\frac{y\log^{\xi} N}{\log^2 y}
\ll \frac{y}{\log^{2-\xi} y}.
\end{align*}
A standard application of partial summation together with Lemma~\ref{LPIT} like in the derivation of~\eqref{derivation} gives 
$$
\sum\limits_{\substack{\mathfrak{p}\preceq \mathfrak{s} \prec \mathfrak{Q}\left(\tilde{y}^{1/2}\right)}} \frac{y/\mathcal{N}(\mathfrak{s})}{\log (\tilde{y}/\mathcal{N}(\mathfrak{s}))}=\frac{\log(\tilde{u}-1)}{\tilde{u}}\cdot \frac{y}{\log \mathcal{N}(\mathfrak{p})}+O\left(\frac{y}{\log^2 y}\right). 
$$
Altogether, we thus obtain
\begin{equation} \label{2case'}
\begin{split}
\Phi_{\mathfrak{r}}(W,\mathfrak{p})&= \frac{y}{\log \tilde{y}}+\frac{\log(\tilde{u}-1)}{\tilde{u}}\cdot \frac{y}{\log \mathcal{N}(\mathfrak{p})}+O\left(\frac{y}{\log^{2-\xi} y}\right)\\
&= \frac{1+\log(\tilde{u}-1)}{\tilde{u}}\cdot \frac{y}{\log \mathcal{N}(\mathfrak{p})}
+O\left(\frac{y}{\log^{2-\xi} y}\right)\\
&= \mathcal{B}(\tilde{u}) \cdot \frac{y}{\log \mathcal{N}(\mathfrak{p})}+O\left(\frac{y}{\log^{2-\xi} y}\right),
\end{split}
\end{equation}
which confirms~\eqref{genasymp4} for $2<\tilde{u}\le 3$. 

Next, we assume that $3<\tilde{u}\le 4$ and hence $\tilde{y}^{1/4}\le  \mathcal{N}(\mathfrak{p})< \tilde{y}^{1/3}$. Applying Lemma~\ref{Buchstab} with
$\mathfrak{q}=\mathfrak{Q}\left(\tilde{y}^{1/3}\right)$ then gives
\begin{equation} \label{BSapp2'}
\Phi_{\mathfrak{r}}(W,\mathfrak{p})= \Phi_{\mathfrak{r}}\left(W,\mathfrak{Q}\left(\tilde{y}^{1/3}\right)\right)+\sum\limits_{\substack{\mathfrak{p}\preceq \mathfrak{s} \prec \mathfrak{Q}\left(\tilde{y}^{1/3}\right)}} 
\Phi_{\mathfrak{rs}}(W,\mathfrak{s}).
\end{equation}
From~\eqref{2case'}, we deduce that
\begin{equation} \label{firstterm2'}
\Phi_{\mathfrak{r}}\left(W,\mathfrak{Q}\left(\tilde{y}^{1/3}\right)\right)
=\mathcal{B}(3)\cdot \frac{y}{\log \tilde{y}^{1/3}}+O\left(\frac{y}{\log^{2-\xi} y}\right).
\end{equation}
Further, we define $y'$, $\tilde{y}'$ and $u'$ as in~\eqref{yu'} and observe that
$$
2<u'\le 3
$$
if $\mathfrak{Q}\left(\tilde{y}^{1/4}\right) \preceq \mathfrak{p}\preceq \mathfrak{s} \prec \mathfrak{Q}\left(\tilde{y}^{1/3}\right)$. Hence, applying~\eqref{2case'} with $y'$ in place of $y$, $\tilde{y}'$ in place of $\tilde{y}$, $u'$ in place of $\tilde{u}$ and $\mathfrak{s}$ in place of $\mathfrak{p}$, we obtain 
\begin{equation} \label{secondterm2'}
\begin{split}
\Phi_{\mathfrak{rs}}(W,\mathfrak{s})&= \mathcal{B}(u')\cdot \frac{y'}{\log \mathcal{N}(\mathfrak{s})} + O\left(\frac{y'}{(\log y')^{2-\xi}}\right)\\ 
&= \mathcal{B}\left(\frac{\log \tilde{y}}{\log \mathcal{N}(\mathfrak{s})}-1\right) \cdot \frac{y/\mathcal{N}(\mathfrak{s})}{\log \mathcal{N}(\mathfrak{s})} + O\left(\frac{y/\mathcal{N}(\mathfrak{s})}{\log^{2-\xi}(y/\mathcal{N}(\mathfrak{s}))}\right)
\end{split}
\end{equation}
under this condition. Plugging~\eqref{firstterm2'} and~\eqref{secondterm2'} into 
\eqref{BSapp2'} gives
\begin{equation} \label{BSapp3'}
\begin{split}
\Phi_{\mathfrak{r}}(W,\mathfrak{p})&= \mathcal{B}(3)\cdot \frac{y}{\log \tilde{y}^{1/3}}
 +\sum\limits_{\substack{\mathfrak{p}\preceq \mathfrak{s} \prec \mathfrak{Q}\left(\tilde{y}^{1/3}\right)}} \mathcal{B}\left(\frac{\log \tilde{y}}{\log \mathcal{N}(\mathfrak{s})}-1\right) \cdot \frac{y/\mathcal{N}(\mathfrak{s})}{\log \mathcal{N}(\mathfrak{s})}+\\ & \quad + O\left(\frac{y}{\log^{2-\xi} y}\right)+
 O\left(\sum\limits_{\substack{\mathfrak{p}\preceq \mathfrak{s} \prec \mathfrak{Q}\left(\tilde{y}^{1/3}\right)}} \frac{y/\mathcal{N}(\mathfrak{s})}{\log^{2-\xi}(y/\mathcal{N}(\mathfrak{s}))}\right).
\end{split}
\end{equation}
Using Lemma~\ref{LPIT}, it is again easily seen that the second $O$-term can be absorbed into the first one. Moreover, a standard application of partial summation together with Lemma~\ref{LPIT} like in the derivation of~\eqref{derivation2} gives 
\begin{align*}
	\MoveEqLeft
	\mathcal{B}(3)\cdot \frac{y}{\log \tilde{y}^{1/3}}
 +\sum\limits_{\substack{\mathfrak{p}\preceq \mathfrak{s} \prec \mathfrak{Q}\left(\tilde{y}^{1/3}\right)}} \mathcal{B}\left(\frac{\log \tilde{y}}{\log \mathcal{N}(\mathfrak{s})}-1\right) \cdot \frac{y/\mathcal{N}(\mathfrak{s})}{\log \mathcal{N}(\mathfrak{s})}\\ & =  \frac{1}{\tilde{u}}\cdot \left(1+\int\limits_1^{\tilde{u}-1} \mathcal{B}(v)dv\right)\cdot \frac{y}{\log \mathcal{N}(\mathfrak{p})}+O\left(\frac{y}{\log^2 y}\right)\\
 &= 
 \mathcal{B}(\tilde{u})\cdot  \frac{y}{\log \mathcal{N}(\mathfrak{p})}+O\left(\frac{y}{\log^2 y}\right).
\end{align*}
Altogether, we thus obtain
\begin{equation}
\begin{split}
\Phi_{\mathfrak{r}}(W,\mathfrak{p})= \mathcal{B}(\tilde{u})\cdot \frac{y}{\log \mathcal{N}(\mathfrak{p})}+O\left(\frac{y}{\log^{2-\xi} y}\right)
\end{split}
\end{equation}
if $3<\tilde{u}\le 4$. Similarly as in the proof Proposition~\ref{roughideals}, we can now iterate the above procedure to establish~\eqref{genasymp4} for general $\tilde{u}$. This completes the proof.  
\end{proof}
 
\section{Harman's lower bound sieve for number fields}
In this section, we prove the following version of Harman's lower bound sieve for number fields, generalizing the lower bound part of~\cite[Theorem 2]{harman1996on-the-distribu}.

\begin{Theorem}[Harman's lower bound sieve for $\mathcal{I}$] \label{lower} 
Fix $\theta\in [1/4,1/3]$ and $\varepsilon,\xi\in (0,1)$. Let $\lambda>0$ and $N\ge 3$ be two variables. Assume that
$W=W_{N}:\mathcal{I}\rightarrow \mathbb{R}_{\ge 0}$ is a function depending on $N$ and  $\tilde{\omega}=\tilde{\omega}_{\lambda,N}:\mathcal{I}\rightarrow \mathbb{R}_{\ge 0}$ is a function depending on $\lambda$ and $N$.  Suppose that  $W$ satisfies the    
conditions~\eqref{keycond}, \eqref{finitecond} and~\eqref{tailcond}. Set $\omega=\lambda W$ and let $x\ge 3$ be a real number such that
$
N\sim x^{1-\varepsilon}.
$ 
Suppose that for $w=\omega,\tilde{\omega}$, we have
\begin{equation} \label{Xcond}
\boxed{
  \sum_{\mathfrak{n}\in \mathcal{I}} d_5(\mathfrak{n})w(\mathfrak{n})\leq x^A}
\end{equation}
for some $A>0$ and 
\begin{equation} \label{newsmallandlarge}
\boxed{
\sum\limits_{\substack{\mathfrak{n}\in \mathcal{I}\\ \mathcal{N}(\mathfrak{n})\not\in (x^{1-\iota},x)}} d_5(\mathfrak{n})w(\mathfrak{n}) \leq \lambda N^{1-\eta}}
\end{equation}
for some fixed $\iota\in [\theta-1/4,\theta]$ and $\eta>0$. 
Suppose further that for any sequences $(a_\mathfrak{a})_{\mathfrak{a} \in \mathcal{I}}$,$(b_\mathfrak{b})_{\mathfrak{b} \in \mathcal{I}}$ of complex numbers with 
$|a_\mathfrak{a}|\leq d_3(\mathfrak{a})$ and $|b_\mathfrak{b}|\leq d_3(\mathfrak{b})$, the inequalities 
\begin{equation} \label{typeI} 
    |\mathop{\sum\sum}\limits_{\substack{\mathfrak{a},\mathfrak{b} \in \mathcal{I}\\ \mathcal{N}(\mathfrak{a})\le x^{1-\theta}}} a_\mathfrak{a} (\omega(\mathfrak{ab})-\tilde{\omega}(\mathfrak{ab}))| \leq \lambda N^{1-\eta}
\end{equation}
and
\begin{equation} \label{typeII}
    |\mathop{\sum\sum}\limits_{\substack{\mathfrak{a},\mathfrak{b} \in \mathcal{I}\\ 
    x^{\theta-\iota}\le \mathcal{N}(\mathfrak{a})\le x^{1-2\theta}}} a_\mathfrak{a} b_\mathfrak{b} (\omega(\mathfrak{ab})-\tilde{\omega}(\mathfrak{ab}))| \leq 
\lambda N^{1-\eta}
\end{equation}
are satisfied.  
Then we have 
\begin{equation} \label{secondclaim}
   \frac{S_{\mathcal{O}}(\tilde{\omega},\sqrt{x})}{S_{\mathcal{O}}(\omega,\sqrt{x})}\ge C(\theta)+o(1)
\end{equation}
as $N\rightarrow \infty$.  
Here $C(\theta)$ is a monotonic and continuous function satisfying $C(\theta)=1+O((\theta-1/4)^{2})$ and $C(7/22)>1/10$.
\end{Theorem}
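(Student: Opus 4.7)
The strategy is to apply Buchstab's identity (Lemma~\ref{Buchstab}) iteratively to $S_{\mathcal{O}}(\tilde{\omega}, \sqrt{x})$ and to compare each resulting piece with the analogous piece of $S_{\mathcal{O}}(\omega, \sqrt{x})$. Whenever the bilinear structure of a piece permits it (one variable in the Type~I range of~\eqref{typeI}, or both variables in the Type~II range of~\eqref{typeII}), Theorem~\ref{asymp} produces the comparison up to an error $O(\lambda N^{1-\eta}\log^{3} x)$. Pieces for which neither handle is available enter with the correct sign to be discarded for a lower bound. Meanwhile, the $\omega$-side of the same decomposition is asymptotically evaluated by Theorem~\ref{genweights}, which, after partial summation against Landau's prime ideal theorem (Lemma~\ref{LPIT}), expresses each surviving piece as $\lambda N/\log\sqrt{x}$ times an integral of the Buchstab function $\mathcal{B}$ over a simplex; the ratio of retained to total integrals is what will define $C(\theta)$.

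A first Buchstab step produces
\[
	S_{\mathcal{O}}(\tilde{\omega}, \sqrt{x})
	= S_{\mathcal{O}}(\tilde{\omega}, x^{1-3\theta})
	- \sum_{x^{1-3\theta}\leq \mathcal{N}(\mathfrak{p}_1) < \sqrt{x}} \Phi_{\mathfrak{p}_1}(\tilde{\omega}, \mathfrak{p}_1).
\]
The first term matches Theorem~\ref{asymp} with $\mu=\theta$, $\kappa = 1-3\theta$, $M = x^{1-\theta}$, $R = 1$, and is replaced by $S_{\mathcal{O}}(\omega, x^{1-3\theta})$ within acceptable error. The remaining sum is split dyadically in $\mathcal{N}(\mathfrak{p}_1)\sim R$. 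After one interior Buchstab step lowering the sieve level to $\mathfrak{Q}(x^{1-3\theta})$, Theorem~\ref{asymp} controls the dyadic block provided either the Type~II constraint $R\in[x^{\theta-\iota},x^{1-2\theta}]$ holds, or, for $R\in(x^{1-2\theta}/2, x^{1-\theta}/2]$ (which contains $\sqrt{x}$ because $1-\theta\ge 1/2$), the large-$R$ clause of Theorem~\ref{asymp} applies via~\eqref{newsmallandlarge}. Blocks where neither applies are non-negative and dropped; otherwise we iterate, opening further variables $\mathfrak{p}_2,\mathfrak{p}_3,\ldots$ whose product norms must land in the Type~II window in order to be retained.

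After finitely many iterations, what survives on the $\tilde{\omega}$-side coincides (up to admissible error) with the identical decomposition applied to $\omega$. Each surviving block $\sum_{\mathfrak{p}_1,\ldots,\mathfrak{p}_k}\Phi_{\mathfrak{p}_1\cdots\mathfrak{p}_k}(\omega,\mathfrak{p}_k)$ is asymptotically evaluated by Theorem~\ref{genweights} at $\mathfrak{r}=\mathfrak{p}_1\cdots\mathfrak{p}_k$, and partial summation over the $\mathfrak{p}_j$ via Lemma~\ref{LPIT} turns the block into $\lambda N/\log\sqrt{x}$ times an integral $\int_{\Delta(\theta)}\mathcal{B}(\cdots)\,d\alpha_1\cdots d\alpha_k$ over a sub-region $\Delta(\theta)$ of the simplex $\{\alpha_1\ge\cdots\ge\alpha_k>0,\ \alpha_1+\cdots+\alpha_k\le 1\}$. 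Because the region $\Delta(\theta)$ is determined purely by the arithmetic of $\theta$ and not by the field $\mathbb{K}$, the quantity $C(\theta)$ emerges as a fixed ratio of $\mathcal{B}$-weighted volumes that depends continuously and monotonically on $\theta$.

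The main obstacle lies in the numerical verification of the two claims on $C(\theta)$. At $\theta=1/4$, the Type~II window $[x^{1/4-\iota},x^{1/2}]$ already contains the whole interior range $[x^{1/4},\sqrt{x})$ created by the first Buchstab step, so no loss is incurred to first order; a perturbation argument then yields $C(\theta)=1+O((\theta-1/4)^2)$. For $\theta=7/22$, where the Type~II window $[x^{7/22-\iota},x^{4/11}]$ fails to cover the small-$R$ gap $[x^{1/22},x^{7/22-\iota})$, losses are unavoidable and the bound $C(7/22)>1/10$ must be extracted by performing three or four Buchstab iterations and estimating the resulting multi-dimensional Buchstab integrals numerically, exactly as in Harman's original treatment of $\mathbb{Q}$~\cite{harman1996on-the-distribu}; since the decomposition structure parallels the rational case step-by-step and the integrals coincide, the same numerical bound transfers.
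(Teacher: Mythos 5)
Your outline follows the same overall strategy as the paper: write $S_{\mathcal{O}}(\tilde{\omega},\sqrt{x})=\Phi_{\mathcal{O}}(\tilde{\omega},\mathfrak{q}_0)$, iterate Buchstab's identity (Lemma~\ref{Buchstab}) down to the sieve level $x^{1-3\theta}$, replace every piece that has a Type~I/Type~II handle by its $\omega$-counterpart via Theorem~\ref{asymp} or~\eqref{typeII}, discard the intractable non-negative pieces, reassemble, and evaluate the resulting deficit with Theorem~\ref{genweights}, Lemma~\ref{LPIT} and partial summation, deferring the numerics to Harman. That much is right. But there is a genuine gap in the combinatorial core of the argument, and it is precisely the part that makes the method work.

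First, ``pieces for which neither handle is available enter with the correct sign to be discarded'' is not something a naive iteration delivers: Buchstab iteration produces terms of \emph{alternating} sign, and only the positively-signed unreachable terms may be dropped for a lower bound. The decomposition must be engineered so that every term which is neither Type~I, Type~II, nor further decomposable carries a $+$ sign. In the paper this is achieved by a very specific sequence of splittings ($\tilde S_4=\tilde S_5+\tilde S_6$, $\tilde S_5=\tilde S_7+\tilde S_8$, $\tilde S_8=\tilde S_{12}+\tilde S_{13}$, etc.), and the single discarded term is $\tilde S_{13}$, whose defining condition $(\overline{\nabla\nabla})$ determines the integrals in~\eqref{C1}. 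Second, and more importantly, your sketch omits the \emph{role-reversal} step: the sum $\tilde S_{12}$ (where $\mathcal{N}(\mathfrak{p})\geq x^{1-2\theta}$ and $\mathcal{N}(\mathfrak{pq})<x^{1-\theta}$) cannot be reached by decomposing in the original variables, and simply discarding it would enlarge the loss region well beyond Harman's. The paper recovers it by rewriting the sum with the complementary divisor $\mathfrak{t}$ as the new outer variable (see~\eqref{reformu}--\eqref{(25)}), applying Buchstab again, and only then invoking~\eqref{consequence}; the truncation in this step is exactly where hypothesis~\eqref{newsmallandlarge} is consumed. Without exhibiting this step, your final claim that ``the decomposition structure parallels the rational case step-by-step and the integrals coincide'' is an assertion rather than a proof: the transfer of Harman's numerical value $C(7/22)>1/10$ is only valid if your retained/discarded regions are literally his, which your iterate-and-discard scheme does not establish.
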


\begin{proof}
We shall literally translate~\cite[section 4]{harman1996on-the-distribu} into our more general setting. 
A lot of notations will be kept. We only arrange the material in a slightly different order and make minor adjustments. The rough idea of the proof is as follows. As in the proof of Theorem~\ref{asymp}, we first note that  
$$
S_{\mathcal{O}}(\tilde{\omega},\sqrt{x})=\Phi_{\mathcal{O}}(\tilde{\omega},\mathfrak{q}_0),
$$
where $\mathfrak{q}_0\coloneqq\mathfrak{Q}\left(\sqrt{x}\right)$. We 
decompose $\Phi_{\mathcal{O}}(\tilde{\omega},\mathfrak{q}_0)$ into a number of sums by iteration of 
Buchstab's identity, Lemma~\ref{Buchstab}. A part of the resulting terms can be discarded immediately if they have positive sign since we are only interested in lower bounds. Another part is approximated by corresponding terms with $\omega$ in place of $\tilde{\omega}$, using Theorem~\ref{asymp} in conjunction with~\eqref{typeI} and~\eqref{typeII} or by a direct application of~\eqref{typeII}.  We are left with a sum containing just terms of the form  $\Phi_{r}(\omega,\mathfrak{p})$. Now we reverse all applications of Buchstab's identity which allows us to greatly simplify the said sum. We approximate the resulting shorter sum of terms of the form $\Phi_{r}(\omega,\mathfrak{p})$ using Theorem~\ref{genweights}. An application of Landau's  prime ideal theorem together with partial summation then leads to integrals which were calculated in ~\cite[section 4]{harman1996on-the-distribu}. We shall carry out in detail only the proof for $\theta\le 2/7$. In the case $\theta>2/7$, which requires additional decompositions, we cut the calculations short and refer to the parallel treatment in~\cite[section 4]{harman1996on-the-distribu}.

Let 
$$
\mathfrak{q}_0\coloneqq\mathfrak{Q}(x^{1/2}) \quad \mbox{and} \quad \mathfrak{q}_1\coloneqq
\mathfrak{Q}(x^{\kappa}),\quad \mbox{where } \kappa\coloneqq1-3\theta.
$$
Then applying Lemma~\ref{Buchstab} twice yields
\begin{equation} \label{Buchtwice}
\begin{split}
\Phi_{\mathcal{O}}(\tilde{\omega},\mathfrak{q}_0) &= \Phi_{\mathcal{O}}(\tilde{\omega},\mathfrak{q}_1)-
\sum\limits_{\mathfrak{q}_1\preceq \mathfrak{p}\prec \mathfrak{q}_0} 
\Phi_{\mathfrak{p}}(\tilde{\omega},\mathfrak{p})\\
 &= \Phi_{\mathcal{O}}(\tilde{\omega},\mathfrak{q}_1)- \sum\limits_{\substack{\mathfrak{q}_1\preceq \mathfrak{p}\prec \mathfrak{Q}(x^{\theta}) \\ \mbox{\scriptsize or } \mathfrak{Q}(x^{1-2\theta})\preceq \mathfrak{p}\prec \mathfrak{q}_0}} 
\Phi_{\mathfrak{p}}(\tilde{\omega},\mathfrak{p})-\sum\limits_{\mathfrak{Q}(x^{\theta})\preceq \mathfrak{p} \prec \mathfrak{Q}(x^{1-2\theta})} \Phi_{\mathfrak{p}}(\tilde{\omega},\mathfrak{p})\\
 &=  \Phi_{\mathcal{O}}(\tilde{\omega},\mathfrak{q}_1)-
\sum\limits_{\substack{\mathfrak{q}_1\preceq \mathfrak{p}\prec \mathfrak{Q}(x^{\theta}) \\ \mbox{\scriptsize or } \mathfrak{Q}(x^{1-2\theta})\preceq \mathfrak{p}\prec \mathfrak{q}_0}} 
\Phi_{\mathfrak{p}}(\tilde{\omega},\mathfrak{q}_1) - \sum\limits_{\mathfrak{Q}(x^{\theta})\preceq \mathfrak{p} \prec \mathfrak{Q}(x^{1-2\theta})} \Phi_{\mathfrak{p}}(\tilde{\omega},\mathfrak{p})+ \\
&\quad +\sum\limits_{\substack{\mathfrak{q}_1\preceq\mathfrak{q}\prec \min\left\{\mathfrak{p}, \mathfrak{Q}\left(x^{1/2}/\mathcal{N}(\mathfrak{p})^{1/2}\right)\right\}\\ \mathfrak{p}\prec \mathfrak{Q}(x^{\theta}) \ \mbox{\scriptsize or } \mathfrak{Q}(x^{1-2\theta})\preceq \mathfrak{p}\prec \mathfrak{q}_0}} \Phi_{\mathfrak{pq}}(\tilde{\omega},\mathfrak{q})+O\left(\lambda N^{1-\eta}\right)\\
 &= \tilde{S}_1-\tilde{S}_2-\tilde{S}_3+\tilde{S}_4+O\left(\lambda N^{1-\eta}\right), \quad \mbox{say,}
\end{split}
\end{equation}
where we have used~\eqref{newsmallandlarge} to truncate the sum over $\mathfrak{q}$ in $\tilde{S}_4$ at $ \mathfrak{Q}\left(x^{1/2}/\mathcal{N}(\mathfrak{p})^{1/2}\right)$. 
We split $\tilde{S}_4$ into
\begin{equation} \label{S4split}
\tilde{S}_4= \sum\limits_{(\nabla)} \Phi_{\mathfrak{pq}}(\tilde{\omega},\mathfrak{q})+\sum\limits_{(\overline{\nabla})} \Phi_{\mathfrak{pq}}(\tilde{\omega},\mathfrak{q})=\tilde{S}_5+\tilde{S}_6, \quad \mbox{say,}
\end{equation}
where 
\begin{equation*} 
(\nabla) \Longleftrightarrow \mathfrak{q}\prec \mathfrak{p}, \ \mathcal{N}(\mathfrak{p})=x^{\alpha}, \
\mathcal{N}(\mathfrak{q})=x^{\beta},\ (\alpha,\beta)\in \mathcal{M}
\end{equation*}
with 
\begin{equation*}
\begin{split}
\mathcal{M}\coloneqq \{(\alpha,\beta)\ :\ & \alpha \in [\kappa,\theta)\cup [1-2\theta,1/2),
\ \kappa\le \beta\le \alpha, \ \beta<(1-\alpha)/2, \\ & 
\{\beta,\alpha+\beta\}\cap
([\theta,1-2\theta)\cup [2\theta,1-\theta))=\emptyset\},
\end{split}
\end{equation*}
and $(\overline{\nabla})$ indicates the same condition as $(\nabla)$, except that
$$
\{\beta,\alpha+\beta\}\cap
([\theta,1-2\theta)\cup [2\theta,1-\theta))\not=\emptyset.
$$
The conditions $(\nabla)$ and $(\overline{\nabla})$ above correspond to those in~\cite[section 4]{harman1996on-the-distribu}, with the minor modifications (which don't change the final result) that our intervals therein are half-open except for the $\beta$-range $\kappa\le \beta\le \alpha$ which is $\kappa\le \beta<\alpha$ in Harman's setting. The latter modification is necessary because
it may happen that $\mathcal{N}(\mathfrak{q})=\mathcal{N}(\mathfrak{p})$ although $\mathfrak{q}\prec \mathfrak{p}$. 

Next, we split $\tilde{S}_5$ into
$$
\tilde{S}_5=\sum\limits_{\substack{(\nabla)\\ \mathcal{N}(\mathfrak{p}\mathfrak{q}^2)< x^{1-\theta}}} \Phi_{\mathfrak{pq}}(\tilde{\omega},\mathfrak{q})+
\sum\limits_{\substack{(\nabla)\\ \mathcal{N}(\mathfrak{p}\mathfrak{q}^2)\ge x^{1-\theta}}} \Phi_{\mathfrak{pq}}(\tilde{\omega},\mathfrak{q})=\tilde{S}_7+\tilde{S}_8, \quad \mbox{say.}
$$ 
Applying Buchstab's identity twice to the sum $\tilde{S}_7$ gives a decomposition of the form
\begin{equation} \label{(24)}
\tilde{S}_7= \sum \Phi_{\mathfrak{pq}}(\tilde{\omega},\mathfrak{q}_1)-\sum \Phi_{\mathfrak{pqr}}(\tilde{\omega},\mathfrak{q}_1)+\sum \Phi_{\mathfrak{pqrs}}(\tilde{\omega},\mathfrak{s})=\tilde{S}_9-\tilde{S}_{10}+\tilde{S}_{11}, \quad \mbox{say,}
\end{equation}
with the obvious summation ranges. 
We split the sum $\tilde{S}_8$, in which we have the summation condition
\begin{equation} \label{twosumconds}
(\nabla) \quad \mbox{and} \quad 
\mathcal{N}\left(\mathfrak{p}\mathfrak{q}^2\right)\ge x^{1-\theta},
\end{equation}
into 
$$
\tilde{S}_8= \sum\limits_{\substack{(\nabla\nabla)}} \Phi_{\mathfrak{pq}}(\tilde{\omega},\mathfrak{q}) + \sum\limits_{\substack{(\overline{\nabla\nabla})}} \Phi_{\mathfrak{pq}}(\tilde{\omega},\mathfrak{q})= \tilde{S}_{12}+\tilde{S}_{13}, \quad \mbox{say.}
$$ 
Here $(\nabla\nabla)$ stands for~\eqref{twosumconds} combined with the condition 
\begin{equation} \label{pqcond}
\mathcal{N}(\mathfrak{pq})< x^{1-\theta}\quad \mbox{and} \quad \mathcal{N}(\mathfrak{p})\ge x^{1-2\theta}.
\end{equation}
The summation range in the second sum $\tilde{S}_{13}$ then satisfies a condition $(\overline{\nabla\nabla})$ which is~\eqref{twosumconds} combined with  
$$
\mathcal{N}(\mathfrak{pq})\ge x^{1-\theta}\quad \mbox{or} \quad \mathcal{N}(\mathfrak{p})< x^{\theta}.
$$
The sum $\tilde{S}_{12}$ is treated by switching roles of variables as follows. We observe that  
\begin{equation} \label{reformu}
\tilde{S}_{12}=
\sum\limits_{(\Xi)} \Phi_{\mathfrak{qt}}\left(\tilde{\omega}_{\mathfrak{q,t}},\mathfrak{Q}\left(x^{1/2}/\mathcal{N}(\mathfrak{qt})^{1/2}\right)\right)
+O\left(\lambda N^{1-\eta}\right),
\end{equation}
with the summation condition $(\Xi)$ indicating that
$$
\mathcal{N}(\mathfrak{q})x^{\theta}>\mathcal{N}(\mathfrak{t})\ge x^{\theta-\iota}, \quad x^{1/2}\le \mathcal{N}(\mathfrak{qt})<x^{1-\theta}, \quad 
x^{\kappa}\le \mathcal{N}(\mathfrak{q})<x^{\theta}, \quad \mathfrak{p}|\mathfrak{t} \Rightarrow \mathfrak{q}\preceq \mathfrak{p}
$$
and $\tilde{\omega}_{\mathfrak{q,t}}$ being defined as
\begin{equation} \label{omegaq}
\tilde{\omega}_{\mathfrak{q,t}}(\mathfrak{nqt})=\tilde{\omega}(\mathfrak{nqt})1_{(\mathfrak{n},\mathfrak{q})\ \mbox{\scriptsize satisfies } (\nabla\nabla)}.
\end{equation}
Here we have again used~\eqref{newsmallandlarge} to truncate the sum over $\mathfrak{t}$. (In the above, note that $\mathfrak{n}$ takes the role of the original variable $\mathfrak{p}$, and $\mathfrak{p}$ is now a new prime ideal.) Applying Buchstab's identity to the right-hand side of~\eqref{reformu} and reversing roles of variables again, followed by another application of Buchstab's identity gives
\begin{equation} \label{(25)}
\begin{split}
\tilde{S}_{12} &= \sum\limits_{(\Xi)} \Phi_{\mathfrak{qt}}(\tilde{\omega}_{\mathfrak{q,t}},\mathfrak{q}_1)-
\sum\limits_{\substack{(\Xi)\\ \mathfrak{q}_1\preceq \mathfrak{s}\prec \mathfrak{Q}\left(x^{1/2}/\mathcal{N}(\mathfrak{qt})^{1/2}\right)}} \Phi_{\mathfrak{qts}}(\tilde{\omega}_{\mathfrak{q,t}},\mathfrak{s})+O\left(\lambda N^{1-\eta}\right)\\
 &= \tilde{S}_{14}-\sum\limits_{\substack{\mathfrak{q},\mathfrak{u}, \mathfrak{s}\\ \mathfrak{p}|\mathfrak{u} \Rightarrow \mathfrak{q}_1\preceq \mathfrak{s}\preceq \mathfrak{p}\\ 
(\mathfrak{us},\mathfrak{q}) \ \mbox{\scriptsize satisfies } (\nabla\nabla)}} \Phi_{\mathfrak{qus}}(\tilde{\omega},\mathfrak{q})+O\left(\lambda N^{1-\eta}\right), \quad \mbox{say,}\\
 &= \tilde{S}_{14}-\sum \Phi_{\mathfrak{qus}}(\tilde{\omega},\mathfrak{q}_1)+\sum \Phi_{\mathfrak{qusr}}(\tilde{\omega},\mathfrak{r})+O\left(\lambda N^{1-\eta}\right)\\
 &= \tilde{S}_{14}-\tilde{S}_{15}+\tilde{S}_{16}+O\left(\lambda N^{1-\eta}\right), \quad \mbox{say,}
\end{split}
\end{equation}
with the obvious summation ranges in the third line. Here we have again used~\eqref{newsmallandlarge}. 
For the last decomposition we take into account that $\mathcal{N}(\mathfrak{qus})< x^{1-\theta}$ follows from~\eqref{pqcond}. 

Combining everything above, we end up with a decomposition of the form
$$
\Phi(\tilde{\omega},\mathfrak{q}_0)=\tilde{S}_1-\tilde{S}_2-\tilde{S}_3+\tilde{S}_6+\tilde{S}_9-\tilde{S}_{10}+\tilde{S}_{11}+\tilde{S}_{13}+\tilde{S}_{14}-\tilde{S}_{15}+\tilde{S}_{16}+O\left(\lambda N^{1-\eta}\right).
$$
If $\theta\le 2/7$, then except for the sum $\tilde{S}_{13}$, it will turn out that all sums $\tilde{S}_i$ on the right-hand side can be approximated by the corresponding sums $S_i$ with $\omega$ in place of $\tilde{\omega}$ using~\eqref{typeII} or Theorem~\ref{asymp}. This comes at the cost of an error of size $O\left(\lambda N^{1-\eta/2}\right)$. We shall explain the details in each case below, following exactly the arguments in~\cite[section 4]{harman1996on-the-distribu}. First, let us finish the proof of the lower bound if $\theta\le 2/7$. It follows that
\begin{equation*}
\begin{split}
\Phi(\tilde{\omega},\mathfrak{q}_0) &=( S_1-S_2-S_3+S_4+S_9-S_{10}+S_{11}+S_{13}+S_{14}-S_{15}+S_{16})+\\ & \quad + \tilde{S}_{13}-S_{13}+ O\left(\lambda N^{1-\eta/2}\right)\\
&\ge  (S_1-S_2-S_3+S_4+S_9-S_{10}+S_{11}+S_{13}+S_{14}-S_{15}+S_{16}) -S_{13}+\\ &  \quad + O\left(\lambda N^{1-\eta/2}\right)\\
 &= \Phi(\omega,\mathfrak{q}_0)-S_{13}+O\left(\lambda N^{1-\eta/2}\right),
\end{split}
\end{equation*}
where to obtain the last line, we have reversed all decompositions above (in particular those using Buchstab's identity).

We recall that
$$
S_{13}=\sum\limits_{(\overline{\nabla\nabla})} \Phi_{\mathfrak{pq}}(\omega,\mathfrak{q}).
$$
Since, by assumption in Theorem~\ref{lower}, $\omega=\lambda W$ with $W$ satisfying~\eqref{keycond}, \eqref{finitecond} and~\eqref{tailcond}, we may apply Theorem~\ref{genweights} to approximate  $S_{13}$ by
$$
S_{13}=\lambda\sum\limits_{(\overline{\nabla\nabla})} \mathcal{B}(u)\cdot \frac{y}{\log \mathcal{N}(\mathfrak{q})} \cdot \left(1+o(1)\right)
$$
as $N\rightarrow \infty$, where 
$$
y\coloneqq\frac{N}{\mathcal{N}(\mathfrak{pq})} \quad \mbox{and} \quad u\coloneqq\frac{\log y}{\log \mathcal{N}(\mathfrak{q})}.
$$
We also have
\begin{equation} \label{Phiomegaasymp}
\Phi_{\mathcal{O}}(\omega,\mathfrak{q}_0)=\lambda\cdot \frac{N}{\log N}\cdot (1+o(1))
\end{equation}
by the same Theorem~\eqref{genweights}. Hence,
\begin{equation*}
\Phi_{\mathcal{O}}(\tilde{\omega},\mathfrak{q}_0)=\lambda\left(\frac{N}{\log N}-\sum\limits_{(\overline{\nabla\nabla})} \mathcal{B}(u)\cdot \frac{y}{\log \mathcal{N}(\mathfrak{q})}\right) \cdot \left(1+o(1)\right).
\end{equation*}
To approximate the sum $\sum_{(\overline{\nabla\nabla})}$ on the right-hand side, we use partial summation together with Landau's prime ideal theorem (Lemma~\ref{LPIT}) in the same fashion as in~\cite[section 4]{harman1996on-the-distribu}. In the case when $1/4\le \theta\le 2/7$, this leads to
$$
\Phi_{\mathcal{O}}(\tilde{\omega},\mathfrak{q}_0)\ge \lambda\cdot \frac{N}{\log N}\cdot C(\theta)(1+o(1)), 
$$ 
where 
\begin{equation} \label{C1}
\begin{split}
C(\theta)\coloneqq\Bigg(1- &
\int\limits_{1-2\theta}^{1/2} \int\limits_{1-\theta-\alpha}^{(1-\alpha)/2}
\frac{d\beta d\alpha}{\alpha\beta(1-\alpha-\beta)}\\ 
- & \int\limits_{(1-\theta)/3}^{\theta} \int\limits_{(1-\theta-\alpha)/2}^{\alpha}
\mathcal{B}\left(\frac{1-\alpha-\beta}{\beta}\right)\frac{d\beta d\alpha}{\alpha\beta^2}\Bigg).
\end{split}
\end{equation}
Together with~\eqref{Phiomegaasymp}, this gives
$$
\frac{\mathcal{S}_{\mathcal{O}}(\tilde{\omega}, \sqrt{x})}{\mathcal{S}_{\mathcal{O}}(\omega, \sqrt{x})}=\frac{\Phi_{\mathcal{O}}(\tilde{\omega},\mathfrak{q}_0)}{\Phi_{\mathcal{O}}(\omega,\mathfrak{q}_0)}\ge C(\theta)+o(1).
$$ 
In~\cite[section 4]{harman1996on-the-distribu}, it was worked out that the above function $C(\theta)$ indeed satisfies
$$
C(\theta)=1+O\left((\theta-1/4)^2\right),
$$
as claimed in the theorem. 

Now we explain why the sums $\tilde{S}_i$ with $i=1,2,3,6,9,10,11,14,15,16$ are in a form which allows us to approximate them by the corresponding sums $S_i$ with $\omega$ in place of $\tilde{\omega}$ by applying~\eqref{typeII} or Theorem~\ref{asymp} in conjunction with~\eqref{typeI} and~\eqref{typeII}. If $R\le x^{1-\theta}/2$, then using~\eqref{typeI} and~\eqref{typeII} together with Theorem~\ref{asymp} with $M=x^{1-\theta}$, $\mu=\theta$ and $\kappa=1-3\theta$, we have 
\begin{equation} \label{consequence}
\sum\limits_{\mathcal{N}(\mathfrak{r})\sim R} c_{\mathfrak{r}} 
\Phi_{\mathfrak{r}}(\tilde{\omega},\mathfrak{q}_1)=\sum\limits_{\mathcal{N}(\mathfrak{r})\sim R}
c_{\mathfrak{r}}\Phi_{\mathfrak{r}}(\omega,\mathfrak{q}_1)+O\left(\lambda N^{1-\eta/2}\right)
\end{equation}
whenever $c_{\mathfrak{r}}\le 1$, where we recall that $\mathfrak{q}_1\coloneqq\mathfrak{Q}\left(x^{\kappa}\right)$.  Hence, we can immediately approximate $\tilde{S}_1$, $\tilde{S}_2$, $\tilde{S}_9$ and $\tilde{S}_{10}$ by the corresponding sums. Here we note that $\mathcal{N}(\mathfrak{pq}),\mathcal{N}(\mathfrak{pqr})\le\mathcal{N}(\mathfrak{p}\mathfrak{q}^2)< x^{1-\theta}$ in the sums $\tilde{S}_9$ and $\tilde{S}_{10}$. 

Since $x^{\theta}\le \mathcal{N}(\mathfrak{p})\le x^{1-2\theta}$ in $\tilde{S}_{3}$, this sum can be approximated directly using~\eqref{typeII} by disentangling variables using the M\"obius function and cosmetic surgery (Lemma~\ref{disentangle}) as in the proof of Theorem~\ref{asymp}. Similarly, the sum $\tilde{S}_6$ can be approximated using~\eqref{typeII}. Indeed, if $\beta\in [\theta,1-2\theta)$ or $ \alpha+\beta\in [\theta,1-2\theta)$, then $\mathcal{N}(\mathfrak{p})$ or $\mathcal{N}(\mathfrak{pq})$ lies in the correct range $[x^{\theta},x^{1-2\theta}]$. If $\beta\in [2\theta,1-\theta)$ or $\alpha+\beta\in [2\theta,1-\theta)$, then we reverse the roles of variables as follows: If $x^{2\theta}\le \mathcal{N}(\mathfrak{a})\le x^{1-\theta}$ in our Type II sum, then  $x^{\theta-\iota}\le \mathcal{N}(\mathfrak{b})\le x^{1-2\theta}$ unless $\mathcal{N}(\mathfrak{ab})\not\in [x^{1-\iota},x]$. The contribution of the latter $\mathfrak{ab}$'s can be bounded using~\eqref{newsmallandlarge}, and hence the rest can be handled using~\eqref{typeII} with the roles of $\mathfrak{a}$ and $\mathfrak{b}$ reversed. 

The sum $\tilde{S}_{11}$ counts certain products of five ideals $\mathfrak{pqrst}$. The norm of each of these ideals is at least $x^{1-3\theta}$. Hence, if $\theta\le 2/7$, then
$$
x^{\theta}\le x^{2-6\theta}\le \min\{\mathcal{N}(\mathfrak{rs}),\mathcal{N}(\mathfrak{st})\}< x^{2/5}<x^{1-2\theta}.
$$
Thus $\tilde{S}_{11}$ can be approximated using~\eqref{typeII} as well. 
In $\tilde{S}_{14}$ and $\tilde{S}_{15}$ we have $\mathcal{N}(\mathfrak{qt})< x^{1-\theta}$ and $\mathcal{N}(\mathfrak{qus})< x^{1-\theta}$, respectively, which again allows us to approximate these sums using~\eqref{consequence}. Finally, if $\theta\le 2/7$, then $\tilde{S}_{16}$ can be dealt with in a similar way as $\tilde{S}_{11}$. 
This completes the proof of the lower bound if $\theta\le 2/7$.

Since the proof of the lower bound for $\theta>2/7$ is parallel to that in~\cite[section 4]{harman1996on-the-distribu}, we cut the details short.  In this case, there are further losses coming from the sums $\tilde{S}_{11}$ and $\tilde{S}_{16}$ which make it necessary to subtract more integrals on the right-hand side of~\eqref{C1}. These integrals are of the form
$$
\int\limits_{\mathcal{D}} \mathcal{B}\left(\frac{1-\alpha-\beta-\gamma-\delta}{\delta}\right) \frac{d\alpha d\beta d\gamma d\delta}{\alpha\beta\gamma \delta^2}
$$
and
$$
\int\limits_{\mathcal{E}} \mathcal{B}\left(\frac{\alpha-\gamma}{\gamma}\right)
\mathcal{B}\left(\frac{1-\alpha-\beta-\delta}{\delta}\right) \frac{d\alpha d\beta d\gamma d\delta}{\beta \gamma^2 \delta^2},
$$
where $\mathcal{D}$ and $\mathcal{E}$ are rather complicated regions which can be found in~\cite[page 250]{harman1996on-the-distribu}. The numerical evaluation in~\cite[page 252]{harman1996on-the-distribu} then establishes the desired lower bound for $2/7<\theta\le 1/4$, and one has $C(7/22)>1/10$. 
\end{proof}
$ $\\
{\bf Remark 4:} In~\cite[section 4]{harman1996on-the-distribu}, Harman also established an upper bound. His proof started with the same equations corresponding to~\eqref{Buchtwice} and~\eqref{S4split} but then continued with a different decomposition. As $\theta \rightarrow 1/3$, the fundamental lemma of sieve theory is needed because in this case, Harman's asymptotic sieve has no Type II information to work with. We have refrained from working out a proof of the upper bound sieve in the number field setting as we need only a lower bound for our application.

\section{Checking conditions~\eqref{keycond}, \eqref{finitecond}, \eqref{tailcond}, \eqref{Xcond}, \eqref{newsmallandlarge}}
To apply Theorem~\ref{lower} to our sieve problem in real and imaginary quadratic number fields, we need to check that our conditions~\eqref{keycond}, \eqref{finitecond} and~\eqref{tailcond} for the relevant function $W$ hold in these settings. Moreover, we need to check that the conditions~\eqref{Xcond} and~\eqref{newsmallandlarge} for the relevant functions $\omega$ and $\tilde{\omega}$ in Theorem~\ref{lower} hold for a suitable $A>0$ and $\iota=2\varepsilon$. This will be carried out in the following.

\subsection{Real quadratic case}
We first check the conditions~\eqref{keycond}, \eqref{finitecond} and~\eqref{tailcond}. In~\cite{BM}, we considered the weight function  
\begin{equation} \label{Ws}
\Psi(\mathfrak{n})\coloneqq\sum\limits_{\substack{k\in \mathcal{O}\\ (k)=\mathfrak{n}}} f\left(\frac{\sigma_1(k)}{\sqrt{N}}\right)f\left(\frac{\sigma_2(k)}{\sqrt{N}}\right),
\end{equation}
where
\begin{equation} \label{fdef}
f(x)\coloneqq\left(\exp\left(-\pi x^2\right)-\exp\left(-2 \pi x^2\right)\right)^{\mathcal{C}}
\end{equation}
for some $\mathcal{C}\in \mathbb{N}$. Our function $W(\mathfrak{n})$ is precisely this function, scaled by some factor, i.e., 
\begin{equation} \label{Wrealdef}
W(\mathfrak{n})\coloneqq\frac{\Psi(\mathfrak{n})}{{\bf constant}} \mbox{ for all } \mathfrak{n}\in I
\end{equation}
for a suitable positive ${\bf constant}$ only depending on $\mathcal{C}$ and $\mathbb{K}$ which we will specify in \eqref{constantdefinition}. According to~\cite{BM}, the weight function $\omega$ is then defined as 
$$
\omega(\mathfrak{q})=\frac{\delta^2}{2\sqrt{d}}\cdot\Psi(\mathfrak{q})=\frac{\delta^2}{2\sqrt{d}}\cdot{\bf constant}\cdot W(\mathfrak{q}),
$$
where $\delta$ plays a similar role as in the imaginary-quadratic setting (here the approximation problem is two-dimensional, though) and $\mathbb{K}=\mathbb{Q}(\sqrt{d})$ with $d>1$ square-free. The function $\tilde{\omega}$ (which depends on $\delta$) is defined in \cite{BM} as follows: We write
\begin{align} \label{Omegadef}
    \Omega_{\Delta}(x):=\exp\left(-\pi \cdot \frac{x^2}{\Delta^2}\right)
\end{align}
and set 
\begin{equation*} 
    \tilde{\omega}(\mathfrak{q}):=\frac{N}{\mathcal{N}(\mathfrak{q})}\cdot \Psi(\mathfrak{q})\cdot  F(\mathfrak{q})=\frac{N}{\mathcal{N}(\mathfrak{q})}\cdot {\bf constant}\cdot W(\mathfrak{q})\cdot  F(\mathfrak{q})
\end{equation*}
with
\begin{equation*} 
F(\mathfrak{q}):=\sum\limits_{p\in\mathcal{O}}\Omega_{\delta/\sqrt{N}}\left(x_1-\frac{\sigma_1(p)}{\sigma_1(q)}\right)\Omega_{\delta/\sqrt{N}}\left(x_2-\frac{\sigma_2(p)}{\sigma_2(q)}\right),
\end{equation*}
where $q$ above is any generator of $\mathfrak{q}$, i.e. 
\begin{align*}
  \mathfrak{q}=(q).  
\end{align*}

For the sake of clarity, we first derive~\eqref{keycond} for $\mathfrak{r}=\mathcal{O}$. In this case, we need to consider the average
$$
\sum\limits_{\mathfrak{s}\in \mathbb{P}} \Psi(\mathfrak{s}).
$$
We relate this sum to 
$$
\sum\limits_{\mathfrak{n}\in \mathcal{I}} \Lambda(\mathfrak{n}) 
\Psi(\mathfrak{n}),
$$
where $\Lambda(\mathfrak{n})$ is the analog of the von Mangoldt function for ideals, defined by
$$
\Lambda(\mathfrak{n})=\begin{cases} \log \mathcal{N}(\mathfrak{p}) & \mbox{ if } \mathfrak{n}=\mathfrak{p}^k \mbox{ with } \mathfrak{p}\in \mathbb{P} \mbox{ and } k\in \mathbb{N},\\ 0 & \mbox{ otherwise.}
\end{cases}
$$
Separating the contribution of prime ideal powers $\mathfrak{p}^k$ with $k\ge 2$, we get
$$
\sum\limits_{\mathfrak{n}\in \mathcal{I}} \Lambda(\mathfrak{n}) 
\Psi(\mathfrak{n})=\sum\limits_{\mathfrak{s}\in \mathbb{P}} (\log \mathcal{N}(\mathfrak{s})) \Psi(\mathfrak{s}) + O\left(N^{1/2+\varepsilon}\right). 
$$ 
Using the bound $\Psi(\mathfrak{n})\ll \log N$ (see~\cite[(12)]{BM}), which is valid for all ideals $\mathfrak{n}\in I$, together with Landau's prime ideal theorem, we get
$$
\sum\limits_{\substack{\mathfrak{s}\in \mathbb{P}\\ \mathcal{N}(\mathfrak{s})\le N/\log^2 N}} (\log \mathcal{N}(\mathfrak{s})) \Psi(\mathfrak{s})  \ll 
\frac{N}{\log N}.
$$
We also have 
$$
\sum\limits_{\substack{\mathfrak{s}\in \mathbb{P}\\ \mathcal{N}(\mathfrak{s})> N\log^2 N}} (\log \mathcal{N}(\mathfrak{s})) \Psi(\mathfrak{s})  \ll  \frac{N}{\log N}
$$
since (see~\cite[(12)]{BM})
\begin{equation} \label{larges}
\Psi(\frak{n})\ll \exp\left(-\pi \mathcal{D} \mathcal{C} \cdot \frac{\mathcal{N}(\mathfrak{n})}{N}\right)\cdot \log \mathcal{N}(\mathfrak{n}) \mbox{ for all } \mathfrak{n}\in \mathcal{I}
\end{equation}
for some constant $\mathcal{D}>0$. 
It follows that 
$$
\sum\limits_{\mathfrak{n}\in \mathcal{I}} \Lambda(\mathfrak{n}) 
\Psi(\mathfrak{n}) = \sum\limits_{\substack{\mathfrak{s}\in \mathbb{P}\\ N/\log^2 N<\mathcal{N}(\mathfrak{s})\le N\log^2 N}} (\log \mathcal{N}(\mathfrak{s})) \Psi(\mathfrak{s})  + O\left( \frac{N}{\log N}\right).
$$
In the summation range 
$$
\frac{N}{\log^2 N}<\mathcal{N}(\mathfrak{s})\le N\log^2 N,
$$
we have
$$
\log \mathcal{N}(\mathfrak{s})=\log N + O(\log\log N).
$$
Moreover, by a similar process as above,
\begin{align*}
	\MoveEqLeft[4]
	\sum\limits_{\substack{\mathfrak{s}\in \mathbb{P}\\ N/\log^2 N<\mathcal{N}(\mathfrak{s})\le N\log^2 N}}  (\log N + O(\log\log N))\Psi(\mathfrak{s})\\ & =
 (\log N + O(\log\log N)) \sum\limits_{\mathfrak{s}\in \mathbb{P}} \Psi(\mathfrak{s}) + O\left(\frac{N\log \log N}{\log N}\right).
\end{align*}
We deduce that
$$
\sum\limits_{\mathfrak{n}\in \mathcal{I}} \Lambda(\mathfrak{n}) 
\Psi(\mathfrak{n}) =(\log N + O(\log\log N)) \sum\limits_{\mathfrak{s}\in \mathbb{P}} \Psi(\mathfrak{s}) + O\left(\frac{N\log\log N}{\log N}\right)
$$
and hence
\begin{equation} \label{primesumlambdasum}
\sum\limits_{\mathfrak{s}\in \mathbb{P}} \Psi(\mathfrak{s})=
\frac{1}{\log N + O(\log\log N)}\sum\limits_{\mathfrak{n}\in \mathcal{I}} \Lambda(\mathfrak{n}) 
\Psi(\mathfrak{n}) + O\left(\frac{N\log\log N}{\log^2 N}\right).
\end{equation}

Next we evaluate the sum on the right-hand side of~\eqref{primesumlambdasum}. Let $\epsilon$ be the fundamental unit.
Then, for any generator $k$ of $\mathfrak{n}$, we have 
\begin{equation} \label{Psiwrite}
\Psi(\mathfrak{n})=2\sum\limits_{n=-\infty}^{\infty} 
f\left(\frac{\epsilon^n|\sigma_1(k)|}{\sqrt{N}}\right)
f\left(\frac{\epsilon^{-n}|\sigma_2(k)|}{\sqrt{N}}\right),
\end{equation}
taking into account all positive and negative units and using the fact that $f$ is even. Since
$$
|\sigma_1(k)\sigma_2(k)|=\mathcal{N}(\mathfrak{n}),
$$
this can be re-written in the form
$$
\Psi(\mathfrak{n})=2\sum\limits_{n=-\infty}^{\infty} 
f\left(\frac{\epsilon^n}{\sqrt{N}} g_1\left(\frac{\log |\sigma_1(k)/\sigma_2(k)|}{2\log \epsilon}\right)\right)
f\left(\frac{\epsilon^{-n}}{\sqrt{N}} g_2\left(\frac{\log |\sigma_1(k)/\sigma_2(k)|}{2\log \epsilon}\right)\right),
$$
where
$$
g_1(\theta)\coloneqq\epsilon^{\theta} \cdot \sqrt{\mathcal{N}(\mathfrak{n})} \quad \mbox{and}
\quad g_2(\theta)\coloneqq\epsilon^{-\theta}\cdot \sqrt{\mathcal{N}(\mathfrak{n})}.
$$
Now we define
$
G: \mathbb{R}\times \mathbb{R}_{\ge 0} \rightarrow \mathbb{R}
$
by
$$
G(\theta,x)=\sum\limits_{n=-\infty}^{\infty} f\left(\epsilon^{n+\theta}\sqrt{x}\right)f\left(\epsilon^{-(n+\theta)}\sqrt{x}\right).
$$
Then it follows that 
$$
\Psi(\mathfrak{n})=2G\left(\frac{\log |\sigma_1(k)/\sigma_2(k)|}{2\log \epsilon},\frac{\mathcal{N}(\mathfrak{n})}{N}\right).
$$

Clearly, $G(\theta,x)$ is periodic in $\theta$ with period 1 and hence has a Fourier series development of the form 
$$
G(\theta,x)=\sum\limits_{m=-\infty}^{\infty} c_m(x)e(m\theta).
$$
It follows that
$$
\Psi(\mathfrak{n})=2\sum\limits_{m=-\infty}^{\infty} c_m\left(\frac{\mathcal{N}(\mathfrak{n})}{N}\right)e\left(m\cdot\frac{\log |\sigma_1(k)/\sigma_2(k)|}{2\log \epsilon}\right).
$$ 
The exponentials on the right-hand side are precisely the Hecke Gr\"o\ss encharaktere (see~\cite{Hecke1} and~\cite{Hecke2})
$$
\lambda^m(\mathfrak{n})\coloneqq e\left(m\cdot\frac{\log |\sigma_1(k)/\sigma_2(k)|}{2\log \epsilon}\right).
$$
We note that the right-hand side indeed only depends on the ideal $\mathfrak{n}$. Hence, in shorter form, we may write
$$
\Psi(\mathfrak{n})=2\sum\limits_{m=-\infty}^{\infty} c_m\left(\frac{\mathcal{N}(\mathfrak{n})}{N}\right)\lambda^m(\mathfrak{n}).
$$

The Fourier coefficient $c_m(x)$ equals
\begin{equation*}
\begin{split}
c_m(x) &= \int\limits_0^1 G(\theta,x)e(-m\theta) d\theta\\
&= \sum\limits_{n=-\infty}^{\infty} \int\limits_0^1  f\left(\epsilon^{n+\theta}\sqrt{x}\right)f\left(\epsilon^{-(n+\theta)}\sqrt{x}\right) e(-m\theta) d\theta \\
&=  \int\limits_{-\infty}^{\infty}   f\left(\epsilon^{y}\sqrt{x}\right)f\left(\epsilon^{-y}\sqrt{x}\right) e(-my)dy.
\end{split}
\end{equation*}
Therefore, we get
$$
\sum\limits_{\mathfrak{n}\in \mathcal{I}} \Lambda(\mathfrak{n}) \Psi(\mathfrak{n})=2\sum\limits_{m=-\infty}^{\infty} \int\limits_{-\infty}^{\infty} e(-my) \left(\sum\limits_{\mathfrak{n}\in \mathcal{I}} F_y\left(\frac{\mathcal{N}(\mathfrak{n})}{N}\right) \Lambda(\mathfrak{n}) \lambda^m(\mathfrak{n})\right)dy,
$$
where 
$$
F_y(x)\coloneqq f\left(\epsilon^{y}\sqrt{x}\right)f\left(\epsilon^{-y}\sqrt{x}\right).
$$
Now let $\phi_y(s)$ be the Mellin transform of $F_y(x)$, i.e.,
$$
\phi_y(s)=\int\limits_0^{\infty} x^{s-1}F_y(x)dx.
$$
Then, by Mellin inversion formula,
$$
F_y(x)=\frac{1}{2\pi i} \int\limits_{c-i\infty}^{c+i\infty} x^{-s}\phi_y(s)ds
$$
for any $c>1$.
Hence, we obtain
\begin{align*}
	\MoveEqLeft[1]
	\sum\limits_{\mathfrak{n}\in \mathcal{I}} \Lambda(\mathfrak{n}) \Psi(\mathfrak{n})\\ &= 2\sum\limits_{m=-\infty}^{\infty} \int\limits_{-\infty}^{\infty} e(-my) \left(\sum\limits_{\mathfrak{n}\in \mathcal{I}}  \left(\frac{1}{2\pi i} \int\limits_{c-i\infty}^{c+i\infty} \left(\frac{\mathcal{N}(\mathfrak{n})}{N}\right)^{-s}\phi_y(s)ds\right) \Lambda(\mathfrak{n}) \lambda^m(\mathfrak{n})\right)dy\\
&=  2\sum\limits_{m=-\infty}^{\infty} \frac{1}{2\pi i}\int\limits_{c-i\infty}^{c+i\infty} N^{s} \left(\sum\limits_{\mathfrak{n}\in \mathcal{I}}  \mathcal{N}(\mathfrak{n})^{-s}\Lambda(\mathfrak{n}) \lambda^m(\mathfrak{n})\right)\left(\int\limits_{-\infty}^{\infty} \phi_y(s)e(-my)dy\right)ds\\
&= - 2\sum\limits_{m=-\infty}^{\infty} \frac{1}{2\pi i}\int\limits_{c-i\infty}^{c+i\infty} N^{s}\cdot \frac{L'}{L}(s,\lambda^m)\left(\int\limits_{-\infty}^{\infty} \phi_y(s)e(-my)dy\right)ds,
\end{align*}
where
$$
L(s,\lambda^m)\coloneqq\sum\limits_{\mathfrak{n}\in \mathcal{I}}  \lambda^m(\mathfrak{n})\mathcal{N}(\mathfrak{n})^{-s}
$$
is the Hecke $L$-function associated to the Gr\"o\ss encharakter $\lambda^m$. 

Next, we want to find the Mellin transform of $F_y(x)$. Using the definition of $f$ in~\eqref{fdef} and multiplying out, we get
\begin{equation*}
\begin{split} 
F_y(x) &
\coloneqq \left(\exp\left(-\pi \epsilon^{2y} x\right)-\exp\left(-2 \pi \epsilon^{2y}x\right)\right)^{\mathcal{C}}\left(\exp\left(-\pi \epsilon^{-2y} x\right)-\exp\left(-2 \pi \epsilon^{-2y}x\right)\right)^{\mathcal{C}}\\
&= \left(\sum\limits_{a=0}^{\mathcal{C}} (-1)^a\binom{\mathcal{C}}{a} \exp\left(-\pi (2\mathcal{C}-a)\epsilon^{2y} x\right)\right)\left(\sum\limits_{b=0}^{\mathcal{C}} (-1)^b\binom{\mathcal{C}}{b} \exp\left(-\pi (2\mathcal{C}-b)\epsilon^{-2y} x\right)\right)\\
&= \sum\limits_{a=\mathcal{C}}^{2\mathcal{C}}  \sum\limits_{b=\mathcal{C}}^{2\mathcal{C}} (-1)^{a+b} \binom{\mathcal{C}}{2\mathcal{C}-a}\binom{\mathcal{C}}{2\mathcal{C}-b} \exp\left(-\pi \left(a\epsilon^{2y}+b\epsilon^{-2y}\right) x\right).
\end{split}
\end{equation*}
Hence, the Mellin transform is
\begin{equation*} 
\phi_y(s)= \pi^{-s} \Gamma(s) \sum\limits_{a=\mathcal{C}}^{2\mathcal{C}}  \sum\limits_{b=\mathcal{C}}^{2\mathcal{C}} (-1)^{a+b} \binom{\mathcal{C}}{2\mathcal{C}-a}\binom{\mathcal{C}}{2\mathcal{C}-b} \left(a\epsilon^{2y}+b\epsilon^{-2y}\right)^{-s}.
\end{equation*}
We deduce that
\begin{equation} \label{compex}
\begin{split}
\sum\limits_{\mathfrak{n}\in \mathcal{I}} \Lambda(\mathfrak{n}) \Psi(\mathfrak{n}) 
&= - 2\sum\limits_{a=\mathcal{C}}^{2\mathcal{C}}  \sum\limits_{b=\mathcal{C}}^{2\mathcal{C}} (-1)^{a+b} \binom{\mathcal{C}}{2\mathcal{C}-a}\binom{\mathcal{C}}{2\mathcal{C}-b} \sum\limits_{m=-\infty}^{\infty} \frac{1}{2\pi i}\times\\ & \int\limits_{c-i\infty}^{c+i\infty} N^{s}\pi^{-s} \Gamma(s)\cdot \frac{L'}{L}(s,\lambda^m)\left(\int\limits_{-\infty}^{\infty} \left(a\epsilon^{2y}+b\epsilon^{-2y}\right)^{-s}e(-my)dy\right)ds.
\end{split}
\end{equation}

The function $L(s,\lambda_0)$ equals the Dedekind zeta function and hence has a simple pole at $s=1$. If $m\not=0$, then $L(s,\lambda^m)$ is entire. It is known (see~\cite{Fogels},~\cite[chapter 5]{IwKo}) that $L(s,\lambda^m)$ has no zeros in the set
$$
\left\{ s\in \mathbb{C} \ :\ \Re s\ge 1-\frac{\varepsilon}{\log(2+|\Im s|)}\right\},
$$
and satisfies a bound of the form
\begin{equation} \label{logderiva}
\frac{L'}{L}(s,\lambda^m)\ll \log^2(2+|\Im s|)
\end{equation}
there if $\varepsilon>0$ is small enough.  
We suppose that $N\ge 2$, choose $c=1+1/\log N$ and then replace the contour $(c-i\infty,c+i\infty)$ of integration on the right-hand side of~\eqref{compex} by the union $\mathcal{U}$ of line segments $(c-i\infty,c-iT]$, $[c-iT,u-iT]$, $[u-iT,u+iT]$, $[u+iT,c+iT]$ and $[c+iT,c+i\infty)$, where 
$$
T\coloneqq\log N
$$ 
and  
$$
u=1-\frac{\varepsilon}{\log(2+T)}.
$$
Now using Cauchy's residue theorem, we get
\begin{equation*}
\sum\limits_{\mathfrak{n}\in \mathcal{I}} \Lambda(\mathfrak{n}) \Psi(\mathfrak{n}) 
= \mbox{\bf constant}\cdot N-E,
\end{equation*}
where 
\begin{equation} \label{constantdefinition}
{\bf constant}\coloneqq \frac{2}{\pi}\cdot \sum\limits_{a=\mathcal{C}}^{2\mathcal{C}}  \sum\limits_{b=\mathcal{C}}^{2\mathcal{C}} (-1)^{a+b} \binom{\mathcal{C}}{2\mathcal{C}-a}\binom{\mathcal{C}}{2\mathcal{C}-b} \int\limits_{-\infty}^{\infty} \left(a\epsilon^{2y}+b\epsilon^{-2y}\right)^{-1}dy
\end{equation}
and 
\begin{equation*}
\begin{split}
E &\coloneqq 2\sum\limits_{a=\mathcal{C}}^{2\mathcal{C}}  \sum\limits_{b=\mathcal{C}}^{2\mathcal{C}} (-1)^{a+b} \binom{\mathcal{C}}{2\mathcal{C}-a}\binom{\mathcal{C}}{2\mathcal{C}-b}  \sum\limits_{m\not=0} \frac{1}{2\pi i} \times\\ & \quad\times
\int\limits_{\mathcal{U}} N^{s}\pi^{-s} \Gamma(s)\cdot \frac{L'}{L}(s,\lambda^m)\left(\int\limits_{-\infty}^{\infty} \left(a\epsilon^{2y}+b\epsilon^{-2y}\right)^{-s}e(-my)dy\right)ds.
\end{split}
\end{equation*}
For an estimation of the error term $E$, we use Stirling's formula to bound $\Gamma(s)$ and~\eqref{logderiva} to bound $L'/L(s)$. It remains to bound the Fourier transform
$$
\int\limits_{-\infty}^{\infty} \left(a\epsilon^{2y}+b\epsilon^{-2y}\right)^{-s}e(-my)dy.
$$

Let $s=\sigma + it$. We write the above integral in the form
$$
\int\limits_{-\infty}^{\infty} \left(a\epsilon^{2y}+b\epsilon^{-2y}\right)^{-\sigma }e\left(f_{a,b,m}(y)\right)dy
$$
with 
$$
f_{a,b,m}(y)\coloneqq-\frac{t}{2\pi} \cdot \log\left(a\epsilon^{2y}+b\epsilon^{-2y}\right)-my.
$$
To get an idea of the behavior of this exponential integral, we first calculate that the stationary phase points $y_0$ with $f_{a,b,m}'(y_0)=0$ satisfy
$$
\epsilon^{4y_0}= \frac{b}{a}\cdot \frac{t \log \epsilon-\pi m}{t\log \epsilon + \pi m}.
$$
For them to exist, the right-hand side needs to be positive which is the case if and only if 
$$
|t|>\frac{\pi}{\log \epsilon}\cdot |m|. 
$$
If 
$$
|t|\le \frac{\pi}{2\log \epsilon}\cdot |m| \quad \mbox{and} \quad \sigma\ge
\frac{1}{2},
$$
then, in a standard way, repeated integration by part gives
$$
\int\limits_{-\infty}^{\infty} \left(a\epsilon^{2y}+b\epsilon^{-2y}\right)^{-\sigma }e\left(f_{a,b,m}(y)\right)dy = O_{a,b,B}\left(|m|^{-B}\right),
$$
where $B$ is any positive real. If 
$$
|t|> \frac{\pi}{2\log \epsilon}\cdot |m| \quad \mbox{and} \quad \sigma\ge
\frac{1}{2},
$$
then we shall use just the trivial bound
$$
\int\limits_{-\infty}^{\infty} \left(a\epsilon^{2y}+b\epsilon^{-2y}\right)^{-\sigma }e\left(f_{a,b,m}(y)\right)dy = O_{a,b}(1).
$$

Combining the above bounds, Stirling's formula and~\eqref{logderiva}, we find that
$$
E\ll N^{1-\varepsilon/\log\log N}
$$
for a suitable $\varepsilon>0$. Hence, we have
\begin{equation} \label{realquadcase}
\sum\limits_{\mathfrak{n}\in \mathcal{I}} \Lambda(\mathfrak{n}) \Psi(\mathfrak{n}) 
= \mbox{\bf constant}\cdot N+O\left(N^{1-\varepsilon/\log\log N}\right).
\end{equation}
Moreover, lower bounds for $\Psi(\mathfrak{n})$ in~\cite{BM} together with Landau's prime ideal theorem and the above asymptotic estimate~\eqref{realquadcase} show that ${\bf constant}>0$. Combining~\eqref{primesumlambdasum} and~\eqref{realquadcase}, we obtain 
\begin{equation} \label{realquadcase1}
\sum\limits_{\mathfrak{s}\in \mathbb{P}} W(\mathfrak{s}) 
= \frac{N}{\log N}+O\left(\frac{N\log \log N}{\log^2 N}\right)
\end{equation}
if $W(\mathfrak{n})$ is defined as in~\eqref{Wrealdef},
establishing~\eqref{keycond} for $\mathfrak{r}=\mathcal{O}$. 

It is easy to modify the calculations above to derive~\eqref{keycond} for general $\mathfrak{r}\in \mathcal{O}$ satisfying $\mathcal{N}(\mathfrak{r})\le N/2$: Choose some generator $l$ of $\mathfrak{r}$ which satisfies
$$
|\sigma_1(l)|\asymp \sqrt{\mathcal{N}(\mathfrak{r})} \asymp
|\sigma_2(l)|
$$
and write 
\begin{equation*} \label{Ws*}
W(\mathfrak{rs})\coloneqq\sum\limits_{\substack{k\in \mathcal{O}\\ (k)=\mathfrak{s}}} f\left(c_1\cdot \frac{\sigma_1(k)}{\sqrt{N/\mathcal{N}(\mathfrak{r})}}\right)f\left(c_2\cdot \frac{\sigma_2(k)}{\sqrt{N/\mathcal{N}(\mathfrak{r})}}\right),
\end{equation*}
where 
$$
c_i\coloneqq\frac{\sigma_i(l)}{\sqrt{\mathcal{N}(\mathfrak{r})}}
\quad \mbox{for } i=1,2.
$$
Now the above calculations for the case $\mathfrak{r}=\mathcal{O}$ go through 
in this general case as well, where $\sqrt{N}$ is replaced by $\sqrt{N/\mathcal{N}(\mathfrak{r})}$ and the additional constants $c_1$ and $c_2$ above are taken into consideration. This establishes~\eqref{keycond}.  

The bound~\eqref{finitecond} is a consequence of 
\begin{equation*} 
W(\mathfrak{n})\ll_{\varepsilon} 1 \mbox{ if } \mathcal{N}(\mathfrak{n})\ge N^{\varepsilon/2},
\end{equation*}
and the bound~\eqref{tailcond} follows from~\eqref{larges}. Hence, we have established the required conditions~\eqref{keycond}, \eqref{finitecond} and~\eqref{tailcond} on $W$. 

Now we turn to the conditions~\eqref{Xcond} and~\eqref{newsmallandlarge} for the weight functions $w=\omega,\tilde{\omega}$, which we shall establish for $A=2$, $\iota=2\varepsilon$ and $\eta>0$ small enough, provided our constant $\mathcal{C}$ in~\eqref{fdef} is large enough.  It was proved in~\cite[(14)]{BM} 
that 
$$
w(\mathfrak{n})\ll \exp\left(-\mathcal{E}\cdot \frac{\mathcal{N}(\mathfrak{n})}{N}\right)\cdot \log N
$$
for a suitable constant $\mathcal{E}>0$ depending on $\mathcal{D}$. 
We recall that $N\sim x^{1-\varepsilon}$, $\lambda \asymp \delta^2$ and $d_5(\mathfrak{n})\ll \mathcal{N}(\mathfrak{n})^{\varepsilon}$. Moreover, we may assume $\delta\ge x^{-7/44}$ in order to establish Theorem~\ref{realcase}. Hence, to prove both~\eqref{Xcond} and~\eqref{newsmallandlarge} with the said choices of $A$, $\iota$ and $\eta$, it suffices to show that
$$
w(\mathfrak{n})\ll 
\mathcal{N}(\mathfrak{n})^{-1} \mbox{ if } \mathcal{N}(\mathfrak{n})< x^{1-2\varepsilon}.
$$ 
Using~\cite[(6),(7),(8),(11)]{BM}, we have the rough bound
$$
w(\mathfrak{n}) \ll N\mathcal{N}(\mathfrak{n})^{-1} \Psi(\mathfrak{n})
$$
in this range. Hence, it suffices to show that
\begin{equation} \label{neededWbound}
W(\mathfrak{n})\ll 
x^{-1} \mbox{ if } \mathcal{N}(\mathfrak{n})< x^{1-2\varepsilon},
\end{equation}
which we shall establish in the following. 

We start with~\eqref{Psiwrite} and note that the generator $k$ of $\mathfrak{n}$ can be chosen in such a way that
$$
|\sigma_1(k)|\asymp |\sigma_2(k)| \asymp \mathcal{N}(\mathfrak{n})^{1/2}.
$$
We further note that (see~\cite[(4)]{BM})
$$
|f(x)|\ll \min\left\{1, |x|^{2\mathcal{C}}\right\}.
$$
Hence, we have
\begin{equation*} 
\begin{split}
\Psi(\mathfrak{n})\ll & \sum\limits_{n=-\infty}^{\infty} 
\min\left\{1, \left|\frac{\mathcal{N}(\mathfrak{n})}{N}\right|^{\mathcal{C}}\cdot \epsilon^{2n\mathcal{C}}\right\}\cdot
\min\left\{1, \left|\frac{\mathcal{N}(\mathfrak{n})}{N}\right|^{\mathcal{C}}\cdot \epsilon^{-2n\mathcal{C}}\right\}\\
\ll & \sum\limits_{n=0}^{\infty} \left|\frac{\mathcal{N}(\mathfrak{n})}{N}\right|^{\mathcal{C}}\cdot \epsilon^{-2n\mathcal{C}}
\ll  x^{-\mathcal{C}\varepsilon}
\end{split}
\end{equation*}
if $\mathcal{N}(\mathfrak{n})\le x^{1-2\varepsilon}$.  Now we choose $\mathcal{C}$ larger than $1/\varepsilon$. Then~\eqref{neededWbound} follows, and the proof is complete. In~\cite{BM}, we chose $\mathcal{C}=\lceil 100/\varepsilon \rceil$, which consists with what we need here. 

\subsection{Imaginary quadratic case}
We first establish~\eqref{keycond}, \eqref{finitecond}, \eqref{tailcond} and~\eqref{Xcond} for the original functions $W$, $\omega$ and $\tilde{\omega}$ used in~\cite{BT}. In this paper, we worked in the ring of integers $\mathcal{O}$ rather than the set of ideals $\mathcal{I}$ throughout and hence defined our weight functions on $\mathcal{O}$ instead of $\mathcal{I}$. Since the unit group of $K=\mathbb{Q}(\sqrt{-d})$ is finite and the class number is supposed to be 1, there is no essential difference between the $\mathcal{O}$- and the $\mathcal{I}$-setting as far as the sieve part is concerned. It therefore suffices to check the said  conditions for the corresponding weight function $W$ on the set of ideals $I$, possibly scaled by a suitable factor to ensure that we get exactly the asymptotic in~\eqref{keycond}. This weight function is simply (cf.~\cite{BT})
$$
W(\mathfrak{n})=\pi \cdot \exp\left(-\pi \cdot \frac{\mathcal{N}(\mathfrak{n})}{N}\right), 
$$   
with scaling factor $\pi$. The function $\omega$ is then $\omega(\mathfrak{a})=\lambda W(\mathfrak{a})$ with $\lambda=4\delta^2$ for some parameter $\delta$ which specifies the weight function $\tilde{\omega}$. For the precise definition of $\tilde{\omega}$ in the setting of $\mathcal{O}$, we refer to~\cite{BT}. Here we just mention that it detects elements $n$ of $\mathcal{O}$ such that the distance of $n\alpha$ to the nearest element $a$ of $\mathcal{O}$ is not much larger than $\delta$.  

An application of partial summation, Lemma~\ref{LPIT} and integration by parts gives
\begin{equation*}
\begin{split}
\sum\limits_{\mathfrak{s}\in \mathbb{P}} W(\mathfrak{rs})&=
-\pi \int\limits_{2}^{\infty} \frac{d}{dt} \exp\left(-\pi\cdot \frac{\mathcal{N}(\mathfrak{r})t}{N}\right) \Big(\sum\limits_{\substack{\mathfrak{s}\in \mathbb{P}\\
\mathcal{N}(\mathfrak{s})\le t}} 1\Big)dt \\
&= -\pi \int\limits_{2}^{\infty} \frac{d}{dt}\exp\left(-\pi\cdot \frac{\mathcal{N}(\mathfrak{r})t}{N}\right) \left(\frac{t}{\log t}+O\left(\frac{t}{\log^2 t}\right)\right)dt\\
&= \pi \int\limits_{2}^{\infty} \exp\left(-\pi\cdot \frac{\mathcal{N}(\mathfrak{r})t}{N}\right) \frac{d}{dt}\left(\frac{t}{\log t}\right) dt+O\left(\frac{N/N(r)}{\log^2 (N/N(r))}\right)\\
&= \frac{N/\mathcal{N}(r)}{\log (N/\mathcal{N}(r))}+O\left(\frac{N/\mathcal{N}(r)}{\log^2 (N/\mathcal{N}(r))}\right)
\end{split}
\end{equation*}
if $\mathcal{N}(\mathfrak{r})\le N/2$, establishing~\eqref{keycond}. The condition~\eqref{finitecond} follows immediately from $W(\mathfrak{a})\le 1$ and Lemma~\ref{LPIT}, without dependence on $\varepsilon$. Moreover, using partial summation, we easily establish the stronger bound 
\begin{equation*} 
\sum\limits_{\substack{\mathfrak{a}\in \mathcal{O}\\ \mathcal{N}(\mathfrak{ar})>\tilde{N}}} W(\mathfrak{ar})=O_{\xi}\left(\frac{N}{\mathcal{N}(\mathfrak{r})}\cdot \exp\left(-(\log N)^{\xi/2}\right)\right)
\end{equation*}
in place of~\eqref{tailcond} provided that $\tilde{N}=N(\log N)^{\xi}$ and $\mathcal{N}(\mathfrak{r})\le N/2$. Hence, we have established the required conditions
\eqref{keycond}, \eqref{finitecond} and~\eqref{tailcond} on $W$. Clearly, \eqref{Xcond} holds as well with $A=2$. 

However, with the above choices of $W$, $\omega$ and $\tilde{\omega}$, the condition~\eqref{newsmallandlarge} will not hold since we need $W(\mathfrak{n})$ to be small if $\mathcal{N}(\mathfrak{n})$ is small compared to $x$. The obvious solution to this problem is to modify $W$ (and correspondingly, $\tilde{\omega}$) in a similar way as in the real quadratic case, namely to choose
\begin{equation} \label{redef}
W(\mathfrak{n})=\frac{1}{\mbox{\bf constant}} \left(\exp\left(-\pi \cdot \frac{\mathcal{N}(\mathfrak{n})}{N}\right)-\exp\left(-2\pi \cdot \frac{\mathcal{N}(\mathfrak{n})}{N}\right)\right)^{\mathcal{C}}
\end{equation}
for a suitable {\bf constant} and $\mathcal{C}\in \mathbb{N}$. This will allow us, similarly as in the real quadratic case, to establish~\eqref{newsmallandlarge} for $\iota=2\varepsilon$ and small enough $\eta$. (In fact, the estimations are easier here.) To establish 
\eqref{keycond}, \eqref{finitecond} and~\eqref{tailcond} for this modified function, we simply open up the $\mathcal{C}$-th power in~\eqref{redef} by applying the binomial formula and use similar estimations as above for the relevant sums of the resulting terms. The bound~\eqref{Xcond} remains valid with $A=2$, of course.  
  
\section{Application to restricted Diophantine approximation} \label{quadproofs}
To prove Theorems~\ref{imag} and~\ref{realcase}, it remains to modify the final arguments in~\cite{BT} and~\cite{BM}. Now we use Harman's lower bound sieve in place of his asymptotic sieve, which we are allowed to do since we have verified the conditions~\eqref{keycond}, \eqref{finitecond}, \eqref{tailcond}, \eqref{Xcond} and~\eqref{newsmallandlarge} in the last section. The underlying Type I and Type II sum estimates remain the same as in~\cite{BT} and~\cite{BM}.
  
\subsection{Proof of Theorem~\ref{imag}} \label{imagcase}
	We use the setup in~\cite{BT}. Since our field $\mathbb{K}$ has class number one and finite unit group, we are again free to switch between elements and ideals 
in $\mathcal{O}$ since it is a PID. 
	Applying~\cite[Propositions 6.6 and 6.7]{BT} with $M=x^{1-\theta}$, $\mu=\theta-2\varepsilon$, $\mu+\kappa=1-2\theta$ where $1/4+2\varepsilon\le \theta\le 1/3$, we get that the sum of the Type I and Type II sums is bounded by
	\begin{equation*}
		\ll_{C,\varepsilon,\omega} \delta^{2} N
		\cdot x^{9\varepsilon} \bigl(
		\abs{q}^{2}x^{-1}
	        + \delta^{-2} \abs{q}^{-1} + \delta^{-1}\abs{q}x^{-1/2}+ \delta^{-2} x^{-\theta} 
		+ \delta^{-1} x^{-\theta/2} \bigr).
	\end{equation*}
	Upon taking $x = \abs{q}^{1/\theta}$ (and hence $|q|=x^{\theta}$) and recalling that $N=x^{1-\varepsilon}$ in~\cite{BT}, we find that the above is dominated by
	\[
		\ll_{C,\varepsilon,\omega} \delta^{2} N^{1-\varepsilon}
	\]
	provided that $\tfrac{1}{2}\geq\delta \geq x^{-\theta/2+12\varepsilon}$. If $\theta=7/22$, we then obtain the lower bound
$$
S_{\mathcal{O}}(\tilde{\omega},\sqrt{x}) \gg \delta^2 \frac{N}{\log N}
$$
from~\cite[Lemma 6.1.]{BT} and Theorem~\ref{lower} with 
 $$
 \lambda=\delta^2\cdot \mbox{\bf constant}'
 $$ 
 for a suitable constant $\mbox{\bf constant}'>0$ coming from the modification of the weight function $W$ in~\eqref{redef}. Now Theorem~\ref{imag} follows after cutting the series defining 
$S(\tilde{\omega},\sqrt{x})$ at $x$ as in~\cite[Lemma 6.2]{BT}.

\subsection{Proof of Theorem~\ref{realcase}} 
Here we use the setup in~\cite{BM}.  
	Again, we take $M=x^{1-\theta}$, $\mu=\theta-2\varepsilon$, $\mu+\kappa=1-2\theta$ where $1/4+2\varepsilon\le \theta\le 1/3$. We first note that the term $x^{3/4}$ in~\cite[equation (87)]{BM} can be replaced by $x^{1-\theta+2\varepsilon}$ because $K$ is now in the range $x^{\theta-2\varepsilon}\ll K\ll x^{1-2\theta}\le x^{1/2}$. Then proceeding along the lines in~\cite[section 11]{BM}, we arrive at   
\begin{equation} \label{diffes} 
\begin{aligned}
	\MoveEqLeft
	x^{-15\varepsilon}\cdot (\mbox{sum of Type I and Type II sums})\\
& \ll x^{1-\theta}W^{4\eta}+\delta^{-2}x^{2-\theta}W^{4\eta-2}+xW^{2\eta-1}+\delta^2W^{2\eta+1}
+\\ & \quad +  \left(\delta+x^{1/2}W^{-1}\right)\left(x^{1-\theta/2}W^{2\eta}+x^{1/2}W^{1/2+\eta}\right)+xW^{\eta-1/2}+x^{1-\theta}
\end{aligned}
\end{equation}
in place of~\cite[equation (126)]{BM}.  As in~\cite[section 11]{BM}, we choose $x$ depending on $W$ in such a way $\delta=x^{1/2}W^{-1}$, i.e.  
$$
x\coloneqq(\delta W)^2
$$
and hence
$$
W\asymp x^{1/2}\delta^{-1}.
$$
Then~\eqref{diffes} turns into
\begin{align*}\MoveEqLeft
	x^{-15\varepsilon}\cdot (\mbox{sum of Type I and Type II sums})\\
&\ll x^{1-\theta+2\eta}\delta^{-4\eta}+
x^{1-\theta/2+\eta}\delta^{1-2\eta}+x^{3/4+\eta/2}\delta^{1/2-\eta}
+x^{1-\theta}.
\end{align*}
Recalling that $N\coloneqq\left\lceil x^{1-\varepsilon}\right\rceil$ in~\cite{BM}, the estimate 
$$
\mbox{sum of Type I and Type II sums}\ll \delta^2N^{1-\varepsilon}
$$
follows if $\varepsilon\le 1/16$ and 
\begin{equation} \label{deltacondi}
\delta\ge N^{-\nu+17\varepsilon}
\end{equation}
with 
$$
\nu\coloneqq\min\left\{\frac{\theta/2-\eta}{1+2\eta},\frac{1/4-\eta/2}{3/2+\eta}\right\}.
$$
If $\theta\le 1/3$, then the first term in the minimum is less than the second term. Fixing $\theta\coloneqq7/22$, it follows that 
\begin{equation} \label{nu}
\nu=\frac{7/44-\eta}{1+2\eta}.
\end{equation}
As in subsection~\ref{imagcase}, we now obtain the desired lower bound
$$
S_{\mathcal{O}}(\tilde{\omega},\sqrt{x}) \gg \delta^2 \frac{N}{\log N}
$$
from Theorem~\ref{lower} with
$$
\lambda=\frac{\delta^2}{2\sqrt{d}}\cdot \mbox{\bf constant},
$$
provided $\delta$ satisfies~\eqref{deltacondi}. Using~\cite[equation (26)]{BM}, we get a sharpened version of~\cite[Theorem 6]{BM} with $\nu$ as in~\eqref{nu} above which leads to Theorem~\ref{realcase} using the same ``unsmoothing'' procedure as in~\cite[section 12]{BM}.


\begin{thebibliography}{20}
	
	\bibitem{BT}
	S.~{Baier}, M. Technau.
	\newblock \emph{On the distribution of 
$\alpha p$ modulo one in imaginary quadratic number fields with class number one}.
	\newblock {J. Théor. Nombres Bordx.
 32}, No. 3, 719--760 (2020).
	
	\bibitem{BM}
	S. Baier, D. Mazumder.
	\newblock \emph{Diophantine approximation with prime restriction 
	in real quadratic number fields}.
	\newblock {Math. Z.}, http://link.springer.com/article/10.1007/s00209-021-02705-x (2021)
	
	\bibitem{Co} M.D. Coleman.
    \newblock \emph{The Rosser-Iwaniec sieve in number fields, with an application}.       
    \newblock {Acta Arith. 65}, No. 1, 53--83 (1993).
	
	\bibitem{Fogels}
	E, Fogels.
    \newblock \emph{On the zeros of Hecke’s L-functions. I, II}.  
    \newblock {Acta Arith. 7}, 87--106, 131--147, 1962.
    
    \bibitem{Ha1} 
    G.~{Harman}.
	\newblock \emph{On the distribution of $\alpha p$ modulo one}.
	\newblock {J.\ London Math.\ Soc. 27}, No. 2,
	9--18, 1983.
	
	\bibitem{harman1996on-the-distribu}
	G.~{Harman}.
	\newblock \emph{On the distribution of $\alpha p$ modulo one. II}.
	\newblock {Proc.\ London Math.\ Soc.  72}, No. 3,
	241--260, 1996.
	
	\bibitem{harman2007primedetectingsieves}
	G.~{Harman}.
	\newblock \emph{Prime-detecting sieves}.
	\newblock Princeton, NJ: Princeton University Press, 2007.
	
	\bibitem{HarZi}
	G.~{Harman}.
	\newblock \emph{Diophantine approximation
with Gaussian primes}.
	\newblock {Q. J. Math. 70}, No. 4, 1505--1519 (2019)
	
	\bibitem{Hecke1}
    E. Hecke.
    \newblock \emph{Eine neue Art von Zetafunktionen und ihre Beziehungen zur      Verteilung der Primzahlen. I}. 
    \newblock {Math. Z. 1}, 357--376, 1918.
    
    \bibitem{Hecke2}
    E. Hecke.
    \newblock \emph{Eine neue Art von Zetafunktionen und ihre Beziehungen zur      Verteilung der Primzahlen. II}. 
    \newblock {Math. Z. 6}, 11--51, 1920.
    
    \bibitem{Iwa}
    H. Iwaniec.
    \newblock \emph{Rosser's Sieve}. 
    \newblock {Acta Arith. 36}, 171--202, 1980.
    
    \bibitem{IwKo} H. Iwaniec, E. Kowalski.
\newblock \emph{Analytic number theory.}
\newblock
Colloquium Publications. American Mathematical Society
Series Profile
 53. Providence, RI: American Mathematical Society (AMS), 2004.
    
    \bibitem{Jia}
	C. Jia.
	\newblock \emph{On the distribution of $\alpha p$ modulo one}. 
    \newblock {J. Number Theory 45}, 241--253, 1993.
	
	\bibitem{Mat} K. Matom\"aki. 
	\newblock \emph{The distribution of $\alpha p$ modulo one.} 
	\newblock {Math. Proc. Camb. Philos. Soc. 147}, No. 2, 267--283, 2009.
	
	\bibitem{VaughanMontgomery}
	H.L.~{Montgomery}, R.C.~{Vaughan}. 
	\newblock \emph{Multiplicative number theory. I. Classical theory}. 
	\newblock Cambridge Studies in Advanced Mathematics, 97. Cambridge University Press, Cambridge, 2007.

       \bibitem{Tenenbaum}
	G.~{Tenenbaum}.
	\newblock \emph{Introduction to analytic and probabilistic number theory. 3rd expanded ed}.
        \newblock Graduate Studies in Mathematics 163. Providence, RI: American Mathematical Society (AMS), 2015.

\end{thebibliography}
\end{document}